\documentclass{amsart}
\usepackage[T1]{fontenc} 
\usepackage{amsmath,amsfonts,amsthm,mathtools,amssymb} 
\usepackage[left=1in,right=1in,top=1in,bottom=1in]{geometry}
\usepackage{dirtytalk}
\usepackage{stmaryrd}
\usepackage{amsaddr}

\usepackage[sc]{mathpazo}          
\usepackage[euler-digits]{eulervm} 

\usepackage[dvipsnames]{xcolor}
\usepackage[pagebackref]{hyperref} 
\hypersetup{
	colorlinks=true,
	linkcolor=blue,
	citecolor=BrickRed,
	urlcolor=MidnightBlue,
	pdftitle={Multivariate de Rham Comparison Theorem},
}
\usepackage{graphicx} 
\usepackage{tikz-cd}
\usetikzlibrary{decorations.pathreplacing,angles,quotes}

\newtheorem{proposition}{Proposition}[section]
\newtheorem{lemma}[proposition]{Lemma}
\newtheorem{remark}[proposition]{Remark}
\newtheorem{corollary}[proposition]{Corollary}
\newtheorem{definition}[proposition]{Definition}
\newtheorem{theorem}[proposition]{Theorem}

\newtheorem{conjecture}[proposition]{Conjecture}

\DeclareMathOperator{\Frac}{Frac}
\DeclareMathOperator{\spm}{Spm}
\DeclareMathOperator{\spec}{Spec}
\DeclareMathOperator{\spmin}{Spmin}
\newcommand{\mAinf}{A_{\mathrm{inf},\Delta}}
\newcommand{\bn}{\textbf{n}}
\newcommand{\bm}{\textbf{m}}
\newcommand{\bp}{\textbf{p}}
\newcommand{\ba}{\textbf{a}}
\newcommand{\bi}{\textbf{i}}

\newcommand{\mCHT}{C_{HT,\Delta}}

\DeclareMathOperator{\Ainf}{A_{inf}}

\DeclareMathOperator{\id}{id}

\DeclareMathOperator{\C}{\textbf{C}}
\DeclareMathOperator{\Ann}{Ann}
\DeclareMathOperator{\gr}{gr}
\DeclareMathOperator{\rank}{rank}

\DeclareMathOperator{\cH}{H}
\DeclareMathOperator{\im}{im}
\DeclareMathOperator{\Bdr}{B_{dR}}
\newcommand{\mBdrp}{B^+_{\mathrm{dR},\Delta}}
\newcommand{\pmBdrp}{B^{+,(p)}_{\mathrm{dR},\Delta}}
\DeclareMathOperator{\Bdrp}{B^+_{dR}}
\newcommand{\mBdr}{B_{\mathrm{dR},\Delta}}
\newcommand{\pmBdr}{B^{(p)}_{\mathrm{dR},\Delta}}
\newcommand{\mBHT}{B_{HT,\Delta}}
\newcommand{\pmBHT}{B^{(p)}_{HT,\Delta}}
\newcommand{\mbDHT}{\textbf{D}_{HT,\Delta}}
\newcommand{\mbDdR}{\textbf{D}_{dR,\Delta}}
\DeclareMathOperator{\BHT}{B_{HT}}

\DeclareMathOperator{\G}{G}

\DeclareMathOperator{\Fil}{Fil}

\DeclareMathOperator{\Rep}{Rep}
\DeclareMathOperator{\GL}{GL}
\DeclareMathOperator{\diag}{diag}
\DeclareMathOperator{\bD}{\textbf{D}}

\begin{document}

\title{Multivariate Period Rings}

\author{Rohit Pokhrel$^{1,2}$}
\address{$^1$Harish-Chandra Research Institute, Chhatnag Road, Jhunsi, Prayagraj, Uttar Pradesh, India}
\address{$^2$Homi Bhabha National Institute, Training School Complex, Anushakti Nagar, Mumbai 400094, India}
\email{rohitpokhrel@hri.res.in}

\begin{abstract}
    In this article, we present a new approach to studying multivariate period rings that is more consistent with classical theory and provides a clearer description of their structure. We also prove that the category of $B$-admissible representations forms a Tannakian subcategory of the category of representations of $\G_{K,\Delta}$ by defining an analogue of $(F,G)$-regular rings, which is central to the classification of representations in multivariate $p$-adic Hodge theory.
\end{abstract}

\maketitle

\section{Introduction}

The classical $p$-adic Hodge theory of Fontaine uses various period rings to classify continuous Galois representations of a $p$-adic field $K$. A period ring $B$ is a commutative ring equipped with a continuous action of $G_K$ such that $g(b_1+b_2)=g(b_1)+g(b_2)$ and $g(b_1b_2)=g(b_1)g(b_2)$ for all $g\in G_K$ and $b_1,b_2\in B$. These rings are constructed to satisfy specific properties. Let $F$ be a closed subfield of $B^{G_K}$; we then have the following definitions:

\begin{definition}[\cite{Fontaine}]\label{Free Representations definition}
    \begin{enumerate}
        \item[(i)] A \emph{$B$-representation} $X$ of $G_K$ is a $B$-module of finite type equipped with a continuous action of $G_K$ satisfying:
        \begin{itemize}
            \item $g(x_1+x_2)=g(x_1)+g(x_2)$ for all $x_1,x_2\in X$ and $g\in G_K$;
            \item $g(\lambda x)=g(\lambda)g(x)$ for all $\lambda\in B, g\in G_K$ and $x\in X$.
        \end{itemize}
        \item[(ii)] A \emph{free $B$-representation} of $G_K$ is a $B$-representation such that the underlying $B$-module $X$ is free.
        \item[(iii)] A free $B$-representation $X$ of $G_K$ is \emph{trivial} if there exists a basis of $X$ consisting of elements fixed by $G_K$ (i.e., elements in $X^{G_K}$).
        \item[(iv)] Let $V$ be an $F$-representation of $G_K$. Then $V$ is called \emph{$B$-admissible} if $B\otimes_F V$ is a trivial $B$-representation of $G_K$.
    \end{enumerate}
\end{definition}

A fundamental result of the theory is that the category $\Rep^B_F(G_K)$ of $B$-admissible $F$-representations of $G_K$ forms a Tannakian subcategory of the category $\Rep_F(G_K)$ of all $F$-representations of $G_K$. Consequently, we can restrict our attention to such subcategories; different period rings thus yield different classes of representations. We aim to establish a similar framework in a multivariate setup. However, the classical definitions of $(F,G)$-regular rings given by Fontaine \cite{Fontaine} are not applicable in the multivariate case, as we must depart from the class of integral domains. To overcome this challenge, we utilise the properties of von Neumann regular rings and modules over them to formulate a definition suited for proving analogous results in the multivariate setup.\\

Let us briefly recall the core ideas of multivariate $p$-adic Hodge theory. For details, see foundational articles such as \cite{Zab, Zab2, Pal-Zab, CarKedZab, BCM}. Let $K_1, \dots, K_t$ be $p$-adic fields over a $p$-adic field $K_0$, and write $\G_{K,\Delta} = \G_{K_1} \times \dots \times \G_{K_t}$, which is a profinite group. Let $F$ be a $p$-adic field. The goal is to study the structure of the category $\Rep_F(\G_{K,\Delta})$ by examining its relevant Tannakian subcategories. These Tannakian subcategories arise from a commutative ring $B_\Delta$ equipped with a continuous action of $\G_{K,\Delta}$ such that $g(b_1+b_2) = g(b_1) + g(b_2)$ and $g(b_1b_2) = g(b_1)g(b_2)$ for all $g \in \G_{K,\Delta}$ and $b_1, b_2 \in B_\Delta$ and contains $F$ as a closed subfield.\\

Consider the perfectoid ring
\begin{equation*}
    \C_\Delta:=\C\hat{\otimes}_{K_0}\cdots\hat{\otimes}_{K_0}\C
\end{equation*}
where the completion is with respect to the semi-norm
\begin{equation*}
    |x|_\Delta:=\inf\left\{\max(|x_1|\cdots|x_t|)\;\middle|\; x=\sum x_1\otimes\cdots\otimes x_t\right\}.
\end{equation*}
This ring is an example of the perfectoid $\Delta$-fields considered in \cite{Pok-Pal}. It comes with a continuous action of $\G_{K,\Delta}$ given by the formula:
\begin{equation*}
  (g_1,\cdots,g_t)\left(\sum x_1\otimes\cdots\otimes x_t\right):=\sum g_1\cdot x_1\otimes\cdots\otimes g_t\cdot x_t.
\end{equation*}
In multivariate $p$-adic Hodge theory, $\C_\Delta$ is an analogue of $\C$. In \cite{BCM}, the authors consider $\C_\Delta$ with $K_0=\Frac W(k)$, the Witt vectors of a finite field $k$, and $\G_{K,\Delta}$ with $K_1=\cdots=K_t$. We, however, consider the general setup, which contains this diagonal setup as a particular case. Using the diagonal setup, \cite{BCM} defined the period rings $\mBdr, \BHT$, which are a robust construction based on results in commutative algebra, conditional on Proposition 4.3. In this article, we provide a construction of multivariate period rings that is more natural and avoids reliance on this Proposition.\\

The period rings defined in this article are slightly different from those of \cite{BCM} and can be related to them via profinite completion. However, this reformulation ensures that these period rings share many properties with the usual period rings, which is necessary for establishing the Tannakian structure. These new period rings are restricted to classical period rings when $|\Delta|=1$ and thus serve as a direct generalization of the classical theory.


\begin{subsection}{Acknowledgment}
The author is highly indebted to his advisor, Aprameyo Pal, for introducing him to these subjects and for many fruitful discussions. The author also thanks Kiran Sridhara Kedlaya for stimulating discussions during his visit to HRI. Financial support from the HRI Institute Fellowship for Research Scholars, the SERB MATRICS grant (MTR/2022/000302), and ANRF/ARG/2025/005885/MS is gratefully acknowledged.
\end{subsection}


\section{Preliminaries on Perfectoid $\Delta$-field $\C_\Delta$}

Let us recall some basic results given in \cite{Pok-Pal}. Let $s=\{s_\alpha\}_{\alpha\in\Delta}$ be a family of continuous embeddings $s_\alpha:\C\to \C$. Define an equivalence relation as the following: Say $s$ is equivalent to $s'$ if and only if there exists a continuous isomorphism $u:\C\to \C$ independent of $\alpha$ such that the diagram
\[
\begin{tikzcd}
    \C\arrow[rr,"u"] &  &  \C\\
        &\arrow[ul,"s_\alpha"] \C \arrow[ru,swap,"s'_\alpha"]&
\end{tikzcd}
\]
commutes. We denote its equivalence class by $\mathfrak{S}$ and call its element an \emph{embedding family}. Similarly, we have \emph{residual embedding family} denoted by $\mathfrak{K}$ (c.f. \cite{Pok-Pal}).\\

Suppose $s=\{s_\alpha\}_{\alpha\in\Delta}\in\mathfrak{S}$ be an embedding family, we have a surjective ring homomorphism 
\begin{align*}
    \beta_s: \C_\Delta&\to \C,\\
             \sum x_1\otimes\cdots\otimes x_t&\mapsto \sum s_1(x_1)\cdots s_t(x_t).
\end{align*}
The kernel $\mathfrak{P}_s$ is a maximal ideal of $\C_\Delta$. Then we have the following classification;

\begin{proposition}[\cite{Pok-Pal}, Proposition 2.4]
    There is a one-to-one correspondence
    \begin{align*}
        \mathfrak{S}&\to \spec\C_\Delta,\\
                  s &\mapsto \mathfrak{P}_s.
    \end{align*}
    In particular, every prime ideal of $\C_\Delta$ is maximal.
\end{proposition}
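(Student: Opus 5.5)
The plan has three steps: show the map is well defined with image in the maximal ideals, prove surjectivity onto all primes, and prove injectivity.

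\emph{Well-definedness.} For $s\in\mathfrak{S}$ the map $\beta_s$ is a continuous surjective ring homomorphism onto the field $\C$. Surjectivity holds because already $x\mapsto s_1(x)$ is surjective, given that continuous embeddings $\C\to\C$ over $K_0$ are isomorphisms. Hence $\mathfrak{P}_s$ is maximal. If $s'=u\circ s$, then $\beta_{s'}=u\circ\beta_s$, so the kernel depends only on the class of $s$.

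\emph{Surjectivity.} The idea is to realise the residue field at any prime as $\C$. Let $\mathfrak{P}$ be a prime of $\C_\Delta$ and put $D=\C_\Delta/\mathfrak{P}$, an integral domain. For each $\alpha$, composing the inclusion of the $\alpha$-th factor with the quotient gives a ring map $\iota_\alpha:\C\to D$, which is injective because $\C$ is a field. The main step is to show that $D$ is itself a field isomorphic to $\C$, with all $\iota_\alpha$ continuous.
\begin{enumerate}
\item[(a)] First treat the algebraic tensor product $\C\otimes_{K_0}\cdots\otimes_{K_0}\C$. Since each $\C$ is a filtered union of finite extensions $L$ of $K_0$, this is a filtered colimit of the rings $L_1\otimes\cdots\otimes L_t$, which are finite étale over $\C$, hence products of copies of $\C$. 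So it is zero-dimensional and reduced, that is, von Neumann regular.
\item[(b)] Pass to the completion. The relevant fact is that $\C_\Delta$ is a uniform Banach ring with spectral norm $|\cdot|_\Delta$, and its points are described via the residual embedding families $\mathfrak{K}$ of \cite{Pok-Pal}. One wants every element of $\C_\Delta$ to be a unit times an idempotent, so that the ring stays von Neumann regular after completion.
\item[(c)] Given (b), every prime is maximal and $D$ is a field. Its norm is then multiplicative, since the spectral norm on a regular Banach ring is multiplicative on residue fields. It restricts to the norm of $\C$ on each $\iota_\alpha(\C)$, is complete, and is algebraically closed over $\iota_1(\C)$. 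So $D=\iota_1(\C)\cong\C$.
\item[(d)] Setting $s_\alpha=\iota_1^{-1}\circ\iota_\alpha$ gives an embedding family with $\mathfrak{P}_s=\mathfrak{P}$.
\end{enumerate}

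\emph{Injectivity.} Suppose $\mathfrak{P}_s=\mathfrak{P}_{s'}$. Then $\beta_s$ and $\beta_{s'}$ factor through the same quotient $\C_\Delta/\mathfrak{P}$, and both induce continuous isomorphisms from it onto $\C$. Hence there is a continuous automorphism $u$ of $\C$ with $\beta_{s'}=u\circ\beta_s$. Restricting to the factors gives $s'_\alpha=u\circ s_\alpha$ for every $\alpha$, so $s$ and $s'$ are equivalent.

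\emph{Main obstacle.} I expect the hardest part to be steps (b) and (c): showing that completion preserves zero-dimensionality, and that the residue field is exactly $\C$ rather than some larger complete field. I would first try to approximate idempotents, using the fact that in a uniform Banach ring idempotents lift along dense subrings and that pseudo-inverses converge. Failing that, I would use the Gelfand spectrum: each prime is contained in the kernel of some multiplicative seminorm, and one then shows that every multiplicative seminorm has kernel of the form $\mathfrak{P}_s$ via the classification through $\mathfrak{K}$.
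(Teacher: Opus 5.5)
The paper gives no proof of this statement; it is imported from \cite{Pok-Pal}. So the only question is whether your argument works. Your well-definedness and injectivity steps are correct. Surjectivity cannot be completed, because for $t\ge 2$ the conclusion ``every prime of $\C_\Delta$ is maximal'' is false.

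Step (a) is already wrong as stated. $\C$ is not a union of finite extensions of $K_0$; only $\overline{K}_0$ is. Moreover $\C\otimes_{K_0}\C$ is not von Neumann regular. For $T\in\C$ transcendental over $K_0$, the element $T\otimes1-1\otimes T$ is a non-zero-divisor (by flatness over $K_0[T]\otimes_{K_0}K_0[T]$). It lies in the kernel of the multiplication map, so it is not a unit. Only the dense subring $\overline{K}_0\otimes_{K_0}\cdots\otimes_{K_0}\overline{K}_0$ is VNR, and step (b), that regularity survives completion, is exactly what fails.

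Here is a witness.
\begin{enumerate}
\item[(1)] Take finite Galois extensions $L_1\subsetneq L_2\subsetneq\cdots$ of $K_0$ in $\overline{K}_0$. Let $\epsilon_j\in L_j\otimes_{K_0}L_j$ (placed in the first two tensor factors) be the idempotent of the component $\sigma=\mathrm{id}$ under $L_j\otimes_{K_0}L_j\cong\prod_\sigma L_j$, $a\otimes b\mapsto(a\sigma(b))_\sigma$.
\item[(2)] For $g\in\mathrm{Gal}(\overline{K}_0/K_0)$, extended to $\C$, put $s_g=(\mathrm{id},g,\mathrm{id},\dots,\mathrm{id})$. Then $\beta_{s_g}(\epsilon_j)=1$ if $g|_{L_j}=\mathrm{id}$, and $0$ otherwise.
\item[(3)] Choose $m_j\ge j$ with $|p^{m_j}(\epsilon_j-\epsilon_{j+1})|_\Delta\to0$, and set $x=\sum_j p^{m_j}(\epsilon_j-\epsilon_{j+1})\in\C_\Delta$. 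Pick $g_j$ trivial on $L_j$ but not on $L_{j+1}$; then $\beta_{s_{g_j}}(x)=p^{m_j}$.
\item[(4)] If all primes were maximal, $\C_\Delta/\mathrm{nil}(\C_\Delta)$ would be reduced and zero-dimensional, hence VNR. So $x^2y-x$ would be nilpotent for some $y\in\C_\Delta$.
\item[(5)] Applying $\beta_{s_{g_j}}$ gives $|\beta_{s_{g_j}}(y)|=p^{m_j}\to\infty$. This contradicts $|\beta_s(y)|\le|y|_\Delta$, which holds because each $\beta_s$ is contractive (continuous embeddings of $\C$ are isometries).
\end{enumerate}
Thus the pseudo-inverses of the partial sums of $x$ are unbounded. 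Neither approximating idempotents nor convergence of pseudo-inverses can work.

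The uniformity you invoke also fails. Take $K_0=\mathbb{Q}_p$, $t=2$, $L_j=\mathbb{Q}_p(\zeta_{p^j})$. Expanding $\epsilon_j$ in the orthogonal basis $\{(\zeta_{p^j}-1)^i\}$ of $L_j$ gives $|\epsilon_j|_\Delta\ge p^{\,j-1-1/(p-1)}$, reflecting the growth of the different. Yet $\sup_s|\beta_s(\epsilon_j)|=1$.

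What your outline does yield, once (c) is repaired, is the bijection $\mathfrak{S}\leftrightarrow\spm\C_\Delta$.
\begin{enumerate}
\item[(i)] A maximal ideal $\mathfrak{m}$ of the Banach ring $\C_\Delta$ is closed and is the kernel of a bounded multiplicative seminorm.
\item[(ii)] In the completed residue field the maps $\iota_\alpha$ are isometric. All the $\iota_\alpha(\C)$ coincide, because each is the closure of $\iota_\alpha(\overline{K}_0)$, and that is the algebraic closure of $K_0$ in this field.
\item[(iii)] Hence the image of the dense subring $\C^{\otimes t}$ is the complete field $\iota_1(\C)$, and $\mathfrak{m}=\mathfrak{P}_s$ with $s_\alpha=\iota_1^{-1}\circ\iota_\alpha$.
\end{enumerate}
Your criterion ``complete and algebraically closed over $\iota_1(\C)$'' would not by itself force $D=\iota_1(\C)$. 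Your Gelfand-spectrum fallback likewise only shows that every prime lies in some $\mathfrak{P}_s$, which is as far as the statement is true.
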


\begin{remark}
    This result is true because we have taken the completion of the tensor product with respect to the metric induced by the semi-norm $|\bullet|_\Delta$. For instance, if we take $p$-adic completion as in \cite{BCM}, then the ring has infinite Krull dimension.
\end{remark}

\begin{proposition}[\cite{Pok-Pal}, Proposition 2.6]
    $\C_\Delta$ is a reduced ring.
\end{proposition}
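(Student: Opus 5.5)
We must show $\C_\Delta$ is reduced: if $x\in\C_\Delta$ and $x^n=0$, then $x=0$. The plan is to combine two facts. Every prime ideal of $\C_\Delta$ is maximal and of the form $\mathfrak{P}_s$ (Proposition 2.4). In addition, the semi-norm $|\bullet|_\Delta$ should be power-multiplicative and controlled by the family of characters $\beta_s$.

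The nilradical is the intersection of all primes. So a nilpotent $x$ lies in $\bigcap_{s\in\mathfrak{S}}\mathfrak{P}_s$, which means $\beta_s(x)=0$ for every embedding family $s$. It therefore suffices to prove the separation statement
\[
\beta_s(x)=0 \text{ for all } s\in\mathfrak{S} \implies x=0 .
\]

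The first step is the algebraic case. Since $\C=\widehat{\overline{K_0}}$, I would first treat elements of the uncompleted tensor product $L_1\otimes_{K_0}\cdots\otimes_{K_0}L_t$ with $L_i$ finite Galois over $K_0$. This is a finite étale $K_0$-algebra, hence a finite product of fields. Its factors are indexed exactly by tuples of embeddings $L_i\hookrightarrow\overline{K_0}$ modulo a common automorphism, and these are the restrictions of the $\beta_s$. So on this finite-level algebra the $\beta_s$ separate points.

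The second step is to establish the norm identity
\[
|x|_\Delta=\sup_s|\beta_s(x)| .
\]
This should hold first on the dense subring $\overline{K_0}^{\otimes t}$, by computing the infimum defining $|\bullet|_\Delta$ via orthogonal bases, and then on $\C_\Delta$ by continuity. The inequality $\ge$ is immediate, since each $\beta_s$ is contractive. Given the identity, $\beta_s(x)=0$ for all $s$ forces $|x|_\Delta=0$, and $|\bullet|_\Delta$ is a norm on the separated completion, so $x=0$.

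The main obstacle is the reverse inequality $\le$, i.e.\ that the tensor semi-norm equals the spectral (sup) norm. I would first try to show that $|\bullet|_\Delta$ is power-multiplicative on the finite étale algebras $L_1\otimes\cdots\otimes L_t$. Then the Berkovich spectral-radius formula identifies it with the sup over multiplicative seminorms. On a finite product of $p$-adic fields, those seminorms are exactly the absolute values $|\beta_s(\bullet)|$.

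A fallback approach avoids the norm identity and passes to the limit directly. Approximate $x$ with $x^n=0$ by algebraic $y$, so that $|y^n|_\Delta$ is small. The finite-level product decomposition then gives $\max_s|\beta_s(y)|$ small. One then needs uniformity of the comparison between $|\bullet|_\Delta$ and this max as $L_i$ grows. This uniformity is the same difficulty as before, handled for instance via perfectoidness: use $\mathcal{O}$-lattices and the fact that Frobenius on $\mathcal{O}_{\C_\Delta}/p$ is surjective with controlled kernel.
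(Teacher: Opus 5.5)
\textbf{There is a genuine gap, and it sits exactly at the step you flag as the main obstacle.} Your reduction to the separation statement, your finite-level analysis, and the inequality $|x|_\Delta\ge\sup_s|\beta_s(x)|$ are all fine. The problem is the identity $|x|_\Delta=\sup_s|\beta_s(x)|$: it is false, it is not even true up to a constant, and it already fails on the finite étale algebras. So neither the power-multiplicativity plan nor the uniform-comparison fallback can work.

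Here is a concrete counterexample. Take $t=2$, $K_0=K_1=K_2=\mathbb{Q}_2$ and $L=\mathbb{Q}_2(i)$. Let $e=\frac12(1\otimes 1-i\otimes i)\in L\otimes_{\mathbb{Q}_2}L$. This element is $1$ at one embedding family and $0$ at the other, so $e^2=e$ and $\sup_s|\beta_s(e)|=1$.
\begin{itemize}
\item The elements $1,\varpi=i-1$ form an orthogonal $\mathbb{Q}_2$-basis of $L$. Hence the projective seminorm of $1\otimes c_0+\varpi\otimes c_1$ is exactly $\max(|c_0|,|\varpi||c_1|)$.
\item Since $\mathbb{Q}_2$ is spherically complete, there is a norm-one projection $\C\to L$. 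So this value is also the value of $|\bullet|_\Delta$ in $\C\otimes_{\mathbb{Q}_2}\C$.
\item Writing $e=1\otimes\frac{1-i}{2}-\varpi\otimes\frac{i}{2}$ gives $|e|_\Delta=\sqrt2$, even though $e$ is idempotent.
\end{itemize}
The same happens in general. Let $L/K_0$ be totally ramified Galois of degree $n$ with uniformizer $\varpi$. Lagrange interpolation shows that the diagonal idempotent of $L\otimes_{K_0}\C\cong\prod_\sigma\C$ has $|e|_\Delta=|\varpi|^{n-1}/|\mathfrak{D}_{L/K_0}|$. Along $\mathbb{Q}_p(\zeta_{p^m})/\mathbb{Q}_p$, with $n=p^{m-1}(p-1)$, this equals $p^{\,m-1-\frac{1}{p-1}+\frac1n}$, which is unbounded in $m$. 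So no inequality $|x|_\Delta\le C\sup_s|\beta_s(x)|$ holds on $\overline{K}_0^{\otimes t}$. Perfectoidness of $\C$ cannot repair this, because the obstruction is the unbounded different over the discretely valued base $K_0$. The same example also contradicts the paper's remark that the topology of $\C_\Delta$ is that of $\sup_s|\beta_s|$. The paper gives no proof of this proposition beyond citing \cite{Pok-Pal}, so it does not supply the missing step either.

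\textbf{What your reduction would still need, and why it seems to fail.} Once the two seminorms are inequivalent, you need the map $x\mapsto(\beta_s(x))_s$ to remain injective on the completion, i.e.\ that $|\bullet|_\Delta$ is closable with respect to $\sup_s|\beta_s(\bullet)|$. This does not follow from separation on the dense subring, and the proposal gives no argument for it. In fact it seems to fail:
\begin{itemize}
\item $S=\mathcal{O}_{\C,\Delta}$ is $p$-complete with surjective Frobenius on $S/p$, hence semiperfectoid, and $\C_\Delta=S[1/p]$.
\item By Bhatt--Scholze, $S\to S_{\mathrm{perfd}}$ is surjective and universal for maps to perfectoid rings. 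So every $\beta_s\colon S\to\mathcal{O}_\C$ factors through $S_{\mathrm{perfd}}$.
\item Since $S_{\mathrm{perfd}}[1/p]$ is uniform, the images of the idempotents $e$ above lie in $p^{-c}$ times the image of $S_{\mathrm{perfd}}$, with $c$ independent of $L$. They are therefore images of elements of $p^{-c}S$.
\item If $S[1/p]\to S_{\mathrm{perfd}}[1/p]$ were injective, the values $|e|_\Delta$ would be uniformly bounded, which they are not.
\end{itemize}
Hence some nonzero $z\in\C_\Delta$ has $\beta_s(z)=0$ for all $s$. If this is right, reducedness of $\C_\Delta$ (defined via $|\bullet|_\Delta$) cannot be obtained by separating points with the $\beta_s$. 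It would also clash with the description $\spec\C_\Delta=\{\mathfrak{P}_s\}$ recalled before the statement, since reducedness plus that description forces $\bigcap_s\mathfrak{P}_s=0$. Your argument does go through verbatim for the completion with respect to $\sup_s|\beta_s(\bullet)|$: that ring embeds in $\prod_s\C$ and is trivially reduced. But it is a different ring from the one defined in the paper.
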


Therefore, $\C_\Delta$ is a reduced ring of dimension zero and hence a Von Neumann regular (VNR) ring (c.f. \cite{Pok-Pal} Section 4). An important property of the VNR ring is that every localisation at a prime ideal is a field. We have the following proposition.

\begin{proposition}[\cite{Pok-Pal},Lemma 4.3]
    The ring homomorphism $\beta_s:\C_\Delta\to \C$ induces an isomorphism from $(\C_\Delta)_{\mathfrak{P}_s}$ to $\C$, for every $s\in\mathfrak{S}$.
\end{proposition}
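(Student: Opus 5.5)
The plan is to combine the universal property of localization with the fact, recalled above, that $\C_\Delta$ is a von Neumann regular ring. Since $\beta_s:\C_\Delta\to\C$ is a surjection onto a field with kernel $\mathfrak{P}_s$, every element of $\C_\Delta\setminus\mathfrak{P}_s$ maps to a nonzero, hence invertible, element of $\C$. By the universal property of localization, $\beta_s$ therefore factors uniquely through a ring homomorphism
\begin{equation*}
    \tilde\beta_s:(\C_\Delta)_{\mathfrak{P}_s}\to \C,\qquad \frac{x}{y}\mapsto \beta_s(x)\beta_s(y)^{-1}.
\end{equation*}
Surjectivity of $\tilde\beta_s$ is immediate from the surjectivity of $\beta_s$. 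So the content of the statement is injectivity, i.e.\ that the kernel $\mathfrak{P}_s(\C_\Delta)_{\mathfrak{P}_s}$ of $\tilde\beta_s$ is zero.

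For injectivity I would use that $\C_\Delta$ is reduced of Krull dimension zero, which follows from Proposition 2.4 and Proposition 2.6 of \cite{Pok-Pal} recalled above. Localizing at $\mathfrak{P}_s$ gives a local ring $(\C_\Delta)_{\mathfrak{P}_s}$. It is reduced, since localization preserves reducedness. It has a unique prime ideal, because primes of the localization correspond to primes of $\C_\Delta$ contained in $\mathfrak{P}_s$, and every prime is maximal. A reduced ring has nilradical zero, and the nilradical is the intersection of all primes; here that intersection is the single maximal ideal $\mathfrak{P}_s(\C_\Delta)_{\mathfrak{P}_s}$. Hence this ideal is zero, so $(\C_\Delta)_{\mathfrak{P}_s}$ is a field. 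Then $\tilde\beta_s$ is a ring homomorphism out of a field onto the nonzero ring $\C$, so it is injective and thus an isomorphism.

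For a more hands-on version of the injectivity step, one can use the von Neumann regular property directly. Given $x\in\mathfrak{P}_s$, there exists $y$ with $x=x^2y$, so $e=xy$ is an idempotent with $ex=x$. Since $e\in\mathfrak{P}_s$, the complementary idempotent satisfies $1-e\notin\mathfrak{P}_s$, and $(1-e)x=0$. Therefore $x/1=0$ in the localization.

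The only point needing care is the claim that every prime of $\C_\Delta$ is maximal, which rests on the completion being taken with respect to $|\bullet|_\Delta$. This is exactly the content of the cited Proposition 2.4 of \cite{Pok-Pal}, so no real obstacle remains. One might also want to check that the resulting isomorphism is compatible with the topologies or with the Galois action of the stabilizer of $\mathfrak{P}_s$, but the statement asks only for a ring isomorphism.
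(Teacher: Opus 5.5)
Your proof is correct and follows the route the paper indicates: the paper cites \cite{Pok-Pal} for this statement, but the justification it gives just before it is the one you use. Namely, $\C_\Delta$ is reduced of Krull dimension zero, hence von Neumann regular, so its localization at $\mathfrak{P}_s$ is a field, and the induced surjection onto $\C$ is therefore an isomorphism; the paper reuses the same argument for $\C_\Delta^{\G_{K,\Delta}}$ in the next proposition, and your idempotent computation showing $\mathfrak{P}_s(\C_\Delta)_{\mathfrak{P}_s}=0$ is a valid explicit check of that step.
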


We have the analogue of the ring of integers for $\C_\Delta$, namely
\begin{align*}
  \mathcal{O}_{\C,\Delta}:=\mathcal{O}_{\C}\hat{\otimes}_{\mathcal{O}_{K_0}}\cdots\hat{\otimes}_{\mathcal{O}_{K_0}}\mathcal{O}_{\C}.
\end{align*}

The topology on $\C_\Delta$ is equivalent to the topology induced by the power-multiplicative semi-norm (c.f. \cite{Pok-Pal})
\begin{align*}
    |x|:=\sup\limits_{s\in\mathfrak{S}}|\beta_s(x)|, \text{ for } x\in \C_\Delta.
\end{align*}

Let $s=(s_\alpha)_{\alpha\in\Delta}$, then consider $K_s:=s_1(K_1)\cdots s_t(K_t)$. Then for every $s=(s_\alpha)_{\alpha\in\Delta}$, we have the following continuous group homomorphism
\begin{align*}
    \eta_s: \G_{K_s}&\to \G_{K,\Delta}\\
                  g&\mapsto (s^{-1}_1\circ g\circ s_1,\cdots, s^{-1}_t\circ g\circ s_t).
\end{align*}

We first prove a basic fact of the VNR ring, which will be used throughout the article:

\begin{lemma}\label{fixedpointofVNR}
    If a profinite group $G$ acts continuously on a topological VNR ring $R$, then the fixed point subring $R^G$ is also a VNR ring.
\end{lemma}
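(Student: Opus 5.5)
The approach is to use the \emph{quasi-inverse} of an element of a VNR ring. In a VNR ring $R$, every $x$ has a unique element $x^*$ satisfying
\[
x^2x^* = x, \qquad x(x^*)^2 = x^*.
\]
One can take $x^* = y x y$ for any $y$ with $xyx = x$. The plan is to show that $x \in R^G$ forces $x^* \in R^G$. Then $x x^* x = x$ exhibits $R^G$ as VNR. Note that this step does not need continuity or profiniteness: any group of ring automorphisms will do.

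First I establish uniqueness of $x^*$. Since $R$ is VNR it is reduced: if $x^2 = 0$, then $x = xyx$ gives $x^2 y = x\cdot xy$, and so $x = xyx = (xy)x$. Because $R$ is commutative, this equals $y x^2 = 0$. Also, $e = xy$ is an idempotent with $eR = xR$. Now suppose $a, b$ both satisfy $x^2 a = x$ and $x a^2 = a$ (resp.\ $b$). Then $xa$ and $xb$ are idempotents generating the same ideal $xR$. Idempotents generating the same principal ideal coincide: if $eR = fR$, then $e = fe$ and $f = ef$, so $e = f$. Hence $xa = xb$. It follows that $a = x a^2 = (xa)a$, and this equals $(xb)a$ and likewise equals $b$ by symmetry. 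Indeed,
\[
a = x a\cdot a = xb\cdot a = b\cdot xa = b\cdot xb = b.
\]
Existence follows by setting $x^* = yxy$ and checking the two identities directly.

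Next, for $g \in G$ and $x \in R^G$, apply the ring automorphism $g$ to the defining identities. This gives $x^2 g(x^*) = x$ and $x\, g(x^*)^2 = g(x^*)$. By uniqueness, $g(x^*) = x^*$, so $x^* \in R^G$. Therefore $x\cdot x^*\cdot x = x$ holds inside $R^G$, and $R^G$ is VNR.

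Finally, if the paper's definition of VNR is \emph{reduced and zero-dimensional} rather than the elementwise condition, I would add the standard equivalence for commutative rings, which is already used implicitly in the paper. Reducedness of $R^G$ is immediate since it is a subring of the reduced ring $R$. The only real obstacle is the uniqueness of the quasi-inverse. A naive argument fails here, because the element $y$ with $xyx = x$ is not unique and so cannot directly be shown to be $G$-invariant. The normalization $x^* = yxy$, pinned down by the idempotent argument above, removes this ambiguity.
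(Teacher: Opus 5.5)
Your proof is correct and follows the paper's argument. Both show that each $x\in R$ has a unique pseudo-inverse $x^*$ with $x^2x^*=x$ and $x(x^*)^2=x^*$ (the paper's $z^2x$ with $x^2z=x$ is, by commutativity, the same construction as your $yxy$), and then conclude $g(x^*)=x^*$ for $x\in R^G$ by uniqueness. The only difference is that you prove uniqueness by noting that $xa$ and $xb$ are idempotents generating $xR$, whereas the paper derives $y_1=y$ by a direct chain of substitutions; neither uses continuity or profiniteness.
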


\begin{proof}
    To prove this lemma, we recall the following standard result from the general theory of VNR rings \cite{Rap}, Lemma 4:
    
    \begin{center}
    \emph{If $R$ is a VNR ring, then for every $x \in R$, there exists a unique $y \in R$ such that $x^2y = x$ and $y^2x = y$.}
    \end{center}
    
    We first demonstrate the existence of such a $y$. Since $R$ is a VNR ring, there exists $z \in R$ such that $x^2z = x$. Let $y = z^2x$. Then we have:
    \begin{align*}
        x^2y &= x^2(z^2x) = (x^2z)zx = xzx = x, \text{ and} \\
        xy^2 &= x(z^2x)^2 = xz^4x^2 = (x^2z)(xz^3) = (x^2z)z^2 = xz^2 = y.
    \end{align*}
    Thus, such an element $y$ exists. To show uniqueness, suppose $y_1 \in R$ satisfies $xy_1^2 = y_1$ and $x^2y_1 = x$. Then:
    \begin{align*}
        y_1 = xy_1^2 = (x^2y)y_1^2 = y(x^2y_1)y_1 = yxy_1 = y(x^2y)y_1 = y^2(x^2y_1) = y^2x = y.
    \end{align*}
    This establishes the uniqueness of $y$. We refer to this element $y$ as the \emph{pseudo-inverse} of $x$.

    Now, returning to the main proof: let $x \in R^G$. Since $R$ is a VNR ring, there exists a unique $y \in R$ such that $x^2y = x$ and $xy^2 = y$. For any $g \in G$, applying the group action yields:
    \begin{align*}
        g(x^2y) = g(x) \implies x^2g(y) = x
        \text{ and } g(xy^2) = g(y) &\implies xg(y)^2 = g(y).
    \end{align*}
    It follows that for every $g \in G$, $g(y)$ is also a pseudo-inverse of $x$. By the uniqueness proven above, we must have $g(y) = y$ for all $g \in G$. Therefore, $y \in R^G$, which confirms that $R^G$ is a VNR ring.
\end{proof}

We now prove the compatibility of $\beta_s$ and $\eta_s$.
\begin{proposition}
    There is a canonical isomorphism between $(\C^{\G_{K,\Delta}}_\Delta)_{\mathfrak{P}_s\cap \C^{\G_{K,\Delta}}_\Delta}$ and $\C^{\G_{K_s}}$.
\end{proposition}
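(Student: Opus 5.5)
We construct the map from $\beta_s$ and then show it is an isomorphism onto $\C^{\G_{K_s}}$. Write $R=\C_\Delta$, $G=\G_{K,\Delta}$, $\mathfrak p=\mathfrak{P}_s\cap R^G$.

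\emph{The map and its target.} The first step is the equivariance
\begin{equation*}
\beta_s(\eta_s(g)x)=g\,\beta_s(x)\qquad (g\in \G_{K_s},\ x\in R).
\end{equation*}
On pure tensors this is immediate: $s_\alpha(s_\alpha^{-1} g s_\alpha(x_\alpha))=g(s_\alpha(x_\alpha))$. It then extends by linearity and continuity. Consequently $\beta_s(R^G)\subseteq \C^{\G_{K_s}}$.

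By Lemma~\ref{fixedpointofVNR}, $R^G$ is VNR, so $\mathfrak p$ is maximal and the localisation $(R^G)_{\mathfrak p}$ is the field $R^G/\mathfrak p$. Moreover, $\beta_s$ sends elements outside $\mathfrak p$ to nonzero elements of $\C$. Hence $\beta_s$ induces an injective field homomorphism
\begin{equation*}
(R^G)_{\mathfrak p}=R^G/\mathfrak p\hookrightarrow \C^{\G_{K_s}}.
\end{equation*}
This is compatible with the isomorphism $R_{\mathfrak{P}_s}\cong\C$ of \cite{Pok-Pal}, Lemma 4.3. Injectivity is automatic, since the kernel of $R^G\to\C$ is exactly $\mathfrak p$.

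\emph{Surjectivity.} By Ax--Sen--Tate, $\C^{\G_{K_s}}=\widehat{K_s}$, the completion of the compositum. The image of $R^G$ contains $\beta_s$ of pure tensors $a_1\otimes\cdots\otimes a_t$ with $a_\alpha\in K_\alpha$, since these lie in $R^G$. Their images $s_1(a_1)\cdots s_t(a_t)$ span $K_s$. So the image contains $K_s$, and it remains to reach the completion.

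When each $K_\alpha$ is a finite extension of $K_0$, $K_s$ is already complete and we are done. In general, take $y\in\widehat{K_s}$ and approximate it by $y_n\in K_s$. Choose preimages $x_n=\sum a^{(n)}_1\otimes\cdots\otimes a^{(n)}_t$ in the algebraic tensor product $K_1\otimes\cdots\otimes K_t\subseteq R^G$.

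\emph{Main obstacle.} The difficulty is that the $x_n$ need not form a Cauchy sequence in $R$. The seminorm $|x|=\sup_{s'}|\beta_{s'}(x)|$ controls every embedding family, not just $s$.

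The plan is to use idempotents. Since $R^G$ is VNR, for each $n$ pick an idempotent $e\in R^G$ with $\beta_s(e)=1$ that cuts out a small clopen neighbourhood of $\mathfrak{P}_s$ in $\spec R^G$. This should be possible because $\spec$ of a VNR ring is profinite, and here the points correspond to $G$-orbits of embedding families. On that neighbourhood the other $\beta_{s'}$ agree with $\beta_s$ up to a Galois twist fixing $K_s$. One then replaces $x_n$ by $e x_n$ and arranges $|e(x_{n+1}-x_n)|$ small. The limit lies in $R^G$, because $R^G$ is closed, and it maps to $y$.

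If this idempotent control proves too delicate, a fallback is to identify $R^G$ directly with $\widehat{K_1\otimes_{K_0}\cdots\otimes_{K_0}K_k}$ via a multivariate Ax--Sen--Tate argument. Its residue fields at maximal ideals are then visibly the completed composita $\widehat{K_s}$.
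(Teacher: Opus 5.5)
Your proof is correct and follows the paper's argument. You restrict $\beta_s$ to $\C^{\G_{K,\Delta}}_\Delta$, use $\beta_s(\eta_s(g)x)=g\,\beta_s(x)$ to land in $\C^{\G_{K_s}}=K_s$, get surjectivity from the images $s_1(a_1)\cdots s_t(a_t)$ of pure tensors with $a_\alpha\in K_\alpha$, and invoke Lemma~\ref{fixedpointofVNR} so that the localisation is a field and the induced map is an isomorphism. In the paper's setting the $K_\alpha$ are $p$-adic fields finite over $K_0$, so $K_s$ is already complete; your idempotent discussion of the non-complete case is therefore unnecessary, and since as written it is only a heuristic sketch rather than an argument, you can simply drop it.
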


\begin{proof}
   Let us restrict the surjective ring homomorphism $\beta_s:\C_\Delta\to \C$ for $s=(s_\alpha)_{\alpha\in\Delta}$ to $\C^{\G_{K,\Delta}}_\Delta$. Note that for $x=\sum x_1\otimes\cdots\otimes x_t\in\C_\Delta$ and $g\in\G_{K_s}$, we have 
   \begin{align*}
       g\cdot \beta_s(x)&=g\sum s_1(x_1)\cdots s_t(x_t)
                        =\sum g\circ s_1(x_1)\cdots g\circ s_t(x_t)\\
                        &=\sum s_1(s^{-1}_1\circ g\circ s_1)(x_1)\cdots s_t(s^{-1}_t\circ g\circ s_t)(x_1)
                        =\beta_s(\eta_s(g)\cdot x).
   \end{align*}
   This shows that if $x\in \C^{\G_{K,\Delta}}_\Delta$, then $g\cdot\beta_s(x)=\beta_s(x)$ i.e., $\beta_s(x)\in \C^{\G_{K_s}}$. Therefore, we have a ring homomorphism $\beta_s:\C^{\G_{K,\Delta}}_\Delta\to \C^{\G_{K_s}}$. Next, we claim that this map is surjective. Using Proposition 4.9 \cite{Fontaine}, we have $\C^{\G_{K_s}}=K_s$ and for $s_1(x_1)\cdots s_t(x_t)$, we have $x= x_1\otimes\cdots\otimes x_t\in\C^{\G_{K,\Delta}}_\Delta$ such that $\beta_s(x)=s_1(x_1)\cdots s_t(x_t)$, and hence the map is surjective. The kernel of this map is $\mathfrak{P}_s\cap \C^{\G_{K,\Delta}}_\Delta$. Consider the surjective ring homomorphism
   \begin{align*}
       (\C^{\G_{K,\Delta}}_\Delta)_{\mathfrak{P}_s\cap \C^{\G_{K,\Delta}}_\Delta}&\to \C^{\G_{K_s}},\\
                    \frac{a}{b} &\mapsto \frac{\beta_s(a)}{\beta_s(b)}.
   \end{align*}
   By Lemma \ref{fixedpointofVNR}, $\C^{\G_{K,\Delta}}_\Delta$ is a VNR ring. Thus, the left-hand side is a field, and hence the above map is an isomorphism. 
\end{proof}

It is also easy to see that every prime ideal of $\C^{\G_{K,\Delta}}_\Delta$ appears as the kernel of the ring homomorphism $\beta_s:\C^{\G_{K,\Delta}}_\Delta\to K_s$. \\

Since $\C_\Delta$ is a perfectoid $\Delta$-field, we can define its tilt as follows;

\begin{definition}
    The \emph{tilt} of $\C_\Delta$ is defined as the total ring of fractions of
    \begin{align*}
        \mathcal{O}^\flat_{\C,\Delta}:=\varprojlim\limits_{\Phi}(\mathcal{O}_{\C,\Delta}/(p)):=\left\{(x_n)_{n\in\mathbb{N}}\in\prod\limits_{n=0}^\infty(\mathcal{O}_{\C,\Delta}/(p))\;\middle|\; x^p_{n+1}=x_n\right\}.
    \end{align*}
    It is also denoted by $\C^\flat_\Delta.$
\end{definition}

The following proposition is central to the construction of multivariate period rings.

\begin{proposition}[Fontaine, Scholze]
    There exists a set-theoretic bijection 
    \begin{align*}
        \mathcal{O}^\flat_{\C,\Delta}\to \varprojlim\limits_{x\mapsto x^p}\mathcal{O}_{\C,\Delta}:=\left\{(x^{(n)})_{n\in\mathbb{N}}\in\prod\limits_{n=0}^\infty\mathcal{O}_{\C,\Delta}\;\middle|\;(x^{(n+1)})^p=x^{(n)}\right\}.
    \end{align*}
\end{proposition}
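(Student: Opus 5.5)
The plan is to follow the classical argument for tilting a $p$-adically complete ring. The key is to check that $\mathcal{O}_{\C,\Delta}$ is $p$-adically separated and complete. This holds because the topology on $\C_\Delta$ is given by the power-multiplicative semi-norm $|x|=\sup_{s\in\mathfrak{S}}|\beta_s(x)|$, and $\mathcal{O}_{\C,\Delta}$ is complete as the completed tensor product. Concretely, I will show that $\mathcal{O}_{\C,\Delta}$ is closed in $\C_\Delta$ and that $p^n\mathcal{O}_{\C,\Delta}$ forms a basis of neighbourhoods of $0$. This reduces to comparing $|\cdot|_\Delta$ with $|\cdot|$ on the unit ball, which I take from the equivalence of topologies recalled from \cite{Pok-Pal}. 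No integral-domain hypothesis is used anywhere, so the argument should not see that $\C_\Delta$ is only VNR.

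The map $\varprojlim_{x\mapsto x^p}\mathcal{O}_{\C,\Delta}\to\mathcal{O}^\flat_{\C,\Delta}$ is reduction modulo $p$ in each coordinate, $(x^{(n)})\mapsto(x^{(n)}\bmod p)$. The inverse is constructed as follows. Given $(x_n)$ with $x_{n+1}^p=x_n$, choose arbitrary lifts $\hat{x}_n\in\mathcal{O}_{\C,\Delta}$ and set
\begin{equation*}
x^{(n)}:=\lim_{m\to\infty}\hat{x}_{n+m}^{p^m}.
\end{equation*}
The basic lemma is the usual one: if $a\equiv b \pmod{p^k}$ with $k\ge1$, then $a^p\equiv b^p\pmod{p^{k+1}}$. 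This follows from the binomial expansion in any commutative ring. Since $\hat{x}_{n+m+1}^p\equiv\hat{x}_{n+m}\pmod p$, we get $\hat{x}_{n+m+1}^{p^{m+1}}\equiv\hat{x}_{n+m}^{p^m}\pmod{p^{m+1}}$. Hence the sequence is $p$-adically Cauchy and converges by completeness. By the same lemma, the limit is independent of the choice of lifts. From the construction we then check directly that $(x^{(n+1)})^p=x^{(n)}$ and that $x^{(n)}\equiv x_n\pmod p$.

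Both composites are the identity. Reducing the constructed sequence mod $p$ returns $(x_n)$. Conversely, starting from a compatible sequence $(x^{(n)})$, we may use the lifts $\hat{x}_n=x^{(n)}$, for which the limit is constant. Separatedness ensures the limit is unique.

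I expect the only real obstacle to be the first step. We must justify that the completion of $\mathcal{O}_\C\otimes\cdots\otimes\mathcal{O}_\C$ with respect to $|\cdot|_\Delta$ agrees with its $p$-adic completion, and that this ring is the unit ball of $\C_\Delta$ with $p$-adically separated topology. The Remark warns that the $p$-adic completion behaves differently as a ring in \cite{BCM}. My first attempt will be to note that on $\mathcal{O}_\C^{\otimes}$ the semi-norm satisfies $|p^n y|_\Delta\le p^{-n}$. Conversely, any element with $|y|_\Delta\le p^{-n}$ can be written as a sum of pure tensors, each with product of norms $\le p^{-n}$. Each such tensor has a factor divisible by appropriate powers of $p$, up to elements of small norm, which shows the two filtrations are cofinal. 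If this fails, I will instead work with the $\varpi$-adic topology for a pseudo-uniformizer $\varpi$ with $p\in\varpi\mathcal{O}$ and $|\varpi|$ close to $1$. The argument above goes through verbatim with $p$ replaced by $\varpi$, since only $p\mid\binom{p}{i}$ and $p\in\varpi\mathcal{O}_{\C,\Delta}$ are needed.
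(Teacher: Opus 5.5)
Your proposal is correct and is the standard Fontaine--Scholze tilting argument, which the paper cites without giving a proof; the only input specific to this setting is that $\mathcal{O}_{\C,\Delta}$ is $p$-adically complete and separated, and this is automatic if $\hat{\otimes}$ is read as $p$-adic completion, while otherwise it follows by rescaling each nonzero pure tensor by powers of a uniformizer $\pi_0$ of $K_0$ so that every factor except the first has absolute value in $(|\pi_0|,1]$, which gives $\{y : |y|_\Delta\le p^{-n}\}\subseteq p^{\,n-c}\,(\mathcal{O}_\C\otimes_{\mathcal{O}_{K_0}}\cdots\otimes_{\mathcal{O}_{K_0}}\mathcal{O}_\C)$ inside the uncompleted tensor product, with $c$ depending only on $t$ and $e(K_0/\mathbb{Q}_p)$ (so the two completions you worry about coincide). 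You should drop the detour through $\sup_s|\beta_s|$ and the claim that $\mathcal{O}_{\C,\Delta}$ is the unit ball of $\C_\Delta$, since neither is used and the latter is false: for $t=2$ and $\pi^2=\pi_0$ one has $|\pi^{-1}\otimes\pi|_\Delta\le 1$, yet $\pi^{-1}\otimes\pi=\pi_0^{-1}(\pi\otimes\pi)\notin\mathcal{O}_{\C,\Delta}$, because $\bar{\pi}\otimes\bar{\pi}\neq 0$ in $\mathcal{O}_{\C,\Delta}/\pi_0\mathcal{O}_{\C,\Delta}=(\mathcal{O}_\C/\pi_0)\otimes_{k_0}(\mathcal{O}_\C/\pi_0)$, where $k_0$ is the residue field of $K_0$.
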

Transporting the structure, we can define a ring structure on $\varprojlim\limits_{x\mapsto x^p}\mathcal{O}_{\C,\Delta}$ as follows; 
\begin{align*}
    (x+y)^{(n)}&=\lim\limits_{m\to \infty}(x^{(m+n)}+y^{(m+n)})^{p^m}\\
    (xy)^{(n)} &=x^{(n)}y^{(n)}
\end{align*}

Using the proposition, we also have a surjective map
\begin{align*}
    \#: \mathcal{O}^\flat_{\C,\Delta}\to \mathcal{O}_{\C,\Delta}
\end{align*}
which extends to a map from $\C^\flat_\Delta$ to $\C_\Delta$.\\

The VNR ring $\C^\flat_\Delta$ is also a perfectoid $\Delta$-field of characteristics $p$, which is complete with respect to the power-multiplicative semi-norm
\begin{align*}
    |x|_\flat=|x^\#|.
\end{align*}
We have the following proposition;

\begin{proposition}[\cite{Pok-Pal}]
    There exists a one-to-one correspondence
    \begin{align*}
        \spec\C^\flat_\Delta&\to \mathfrak{S},\\
        \text{and }\spec\mathcal{O}^\flat_{\C,\Delta}\setminus\spm\mathcal{O}^\flat_{\C,\Delta}&\to \mathfrak{S},\\
      \text{and }  \spm\mathcal{O}^\flat_{\C,\Delta}&\to \mathfrak{K}.
    \end{align*}
\end{proposition}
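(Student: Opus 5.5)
The plan is to prove the three correspondences by transporting the classification of $\spec\C_\Delta$ (Proposition 2.4 of \cite{Pok-Pal}) through the tilting equivalence. For each embedding family $s\in\mathfrak{S}$, I will first build a ring homomorphism $\beta^\flat_s:\mathcal{O}^\flat_{\C,\Delta}\to\mathcal{O}^\flat_{\C}$. The map $\beta_s$ sends $\mathcal{O}_{\C,\Delta}$ into $\mathcal{O}_\C$, because $|\beta_s(x)|\le|x|$. It therefore induces maps $\mathcal{O}_{\C,\Delta}/(p)\to\mathcal{O}_\C/(p)$ that commute with Frobenius, and passing to the inverse limit gives $\beta_s^\flat$. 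In terms of the multiplicative description, $\beta_s^\flat(x)^{(n)}=\beta_s(x^{(n)})$, so $\beta_s(x^\#)=\beta^\flat_s(x)^\#$. It follows that $|\beta_s^\flat(x)|_\flat=|\beta_s(x^\#)|$, and hence $|x|_\flat=\sup_s|\beta^\flat_s(x)|_\flat$.

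For the second and third correspondences I will use the valuation ring $\mathcal{O}^\flat_\C$. It has exactly two primes, $0$ and the maximal ideal $\mathfrak{m}^\flat$. Pulling these back gives a prime $\mathfrak{p}^\flat_s:=\ker\beta^\flat_s$ and a prime $(\beta_s^\flat)^{-1}(\mathfrak{m}^\flat)$, and the latter depends only on the residual embedding family. To see that $\mathfrak{p}^\flat_s$ is not maximal, I will use the element $\varpi^\flat\otimes1\cdots$, a compatible system of $p$-power roots of $p$. Its image under every $\beta^\flat_s$ is a nonzero topologically nilpotent element, so $\mathfrak{p}_s^\flat\subsetneq(\beta_s^\flat)^{-1}(\mathfrak{m}^\flat)$. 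Injectivity in $s$ will follow from the injectivity of $s\mapsto\mathfrak{P}_s$, using $\beta_s(x^\#)=\beta^\flat_s(x)^\#$ together with the density of $\#$-images modulo $p$.

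The main obstacle is surjectivity: showing that every prime $\mathfrak{q}$ of $\mathcal{O}^\flat_{\C,\Delta}$ is of one of these forms. I would try the following. Since the semi-norm $|\cdot|_\flat$ is power-multiplicative and complete, I will use the Berkovich spectrum of the Banach ring $\mathcal{O}^\flat_{\C,\Delta}[1/\varpi^\flat]=\C^\flat_\Delta$. The tilting equivalence for perfectoid Banach rings identifies $\mathcal{M}(\C^\flat_\Delta)$ with $\mathcal{M}(\C_\Delta)$. By Proposition 2.4 of \cite{Pok-Pal} and Lemma 4.3 there, every point of the latter is $|\beta_s(\cdot)|$ for some $s$.

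Given a prime $\mathfrak{q}$ not containing $\varpi^\flat$, I need a multiplicative seminorm whose kernel is $\mathfrak{q}$. Since $\C^\flat_\Delta$ is reduced of dimension zero, hence VNR, $\mathfrak{q}$ corresponds to an idempotent-defined clopen point. The VNR property (Lemma \ref{fixedpointofVNR} style pseudo-inverses) makes the localisation a field. Its completion then carries a rank-one valuation extending $|\cdot|_\flat$, and this valuation equals some $|\beta_s^\flat(\cdot)|$. Primes containing $\varpi^\flat$ correspond to primes of $\mathcal{O}^\flat_{\C,\Delta}/\varpi^\flat\cong\mathcal{O}_{\C,\Delta}/p$. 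Their reduction is governed by the residue fields $\bar k\otimes_k\cdots\otimes_k\bar k$, whose maximal spectrum is by definition $\mathfrak{K}$.

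Finally, the first bijection, $\spec\C^\flat_\Delta\to\mathfrak{S}$, follows from the second by localising at the non-zero-divisors. Since $\varpi^\flat$ is a non-zero-divisor, this inverts $\varpi^\flat$, and the primes of $\C^\flat_\Delta$ are exactly the primes of $\mathcal{O}^\flat_{\C,\Delta}$ that avoid $\varpi^\flat$, which are the non-maximal ones.
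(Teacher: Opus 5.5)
The paper gives no argument for this proposition (it is quoted from \cite{Pok-Pal}), so your proposal has to stand on its own. It fails at the surjectivity step.

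There you write ``since $\C^\flat_\Delta$ is reduced of dimension zero, hence VNR''. But the claim that every prime of $\mathcal{O}^\flat_{\C,\Delta}[1/\varpi^\flat]$ is maximal is exactly the non-trivial content of the first two bijections, and nothing in your plan proves it. Tilting identifies Berkovich and adic spectra, not Zariski spectra. So it cannot carry ``every prime is maximal'' over from $\C_\Delta$, even if you take Proposition 2.4 of \cite{Pok-Pal} on faith.

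What your method does give is the statement for closed primes, once you repair the phrase ``its completion carries a rank-one valuation extending $|\cdot|_\flat$''. (A quotient seminorm on a residue field is only submultiplicative, and it is a norm only if the prime is closed.) The correct version runs as follows:
\begin{itemize}
\item A maximal ideal $\mathfrak{m}$ of the complete ring $\mathcal{O}^\flat_{\C,\Delta}[1/\varpi^\flat]$ is closed, because $1+\varpi^\flat\mathcal{O}^\flat_{\C,\Delta}$ consists of units.
\item Hence $\mathcal{M}(\mathcal{O}^\flat_{\C,\Delta}[1/\varpi^\flat]/\mathfrak{m})\neq\emptyset$, which produces a bounded multiplicative seminorm with kernel $\mathfrak{m}$.
\item Tilting matches this seminorm with some $|\beta^\flat_s(\cdot)|_\flat$.
\end{itemize}
Kernels of bounded seminorms are closed, so a non-closed prime is invisible to this argument.

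Your last paragraph rests on the same presupposition. The total ring of fractions inverts every non-zero-divisor, not only $\varpi^\flat$. The ideal $\ker\beta^\flat_s$ survives there only if it contains no non-zero-divisor. Identifying $\spec\C^\flat_\Delta$ with the primes avoiding $\varpi^\flat$ therefore already assumes that $\mathcal{O}^\flat_{\C,\Delta}[1/\varpi^\flat]$ is VNR.

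Nor should you expect to fill this gap. Suppose, as the paper asserts, that $\C_\Delta$ carries the topology of $\sup_s|\beta_s(\cdot)|$. Take $t=2$, so that $\mathfrak{S}\cong G_{K_0}$.
\begin{itemize}
\item The algebraic isomorphism of $\overline{K}_0\otimes_{K_0}\C$ with the ring of locally constant functions $\mathfrak{S}\to\C$ completes to $\C_\Delta\cong C(\mathfrak{S},\C)$, whose tilt is $C(\mathfrak{S},\C^\flat)$.
\item $\mathfrak{S}$ is an infinite profinite set with no isolated points. Choose clopen sets $U_0=\mathfrak{S}\supset U_1\supset\cdots$ with $\bigcap_n U_n=\{s\}$.
\item Then $\sum_n(\varpi^\flat)^n\mathbf{1}_{U_n\setminus U_{n+1}}$ is a non-zero-divisor lying in $\ker\beta^\flat_s$, and it is not a unit.
\end{itemize}
Hence this ring is not VNR, and it has non-maximal primes that are not of the form $\ker\beta^\flat_s$. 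Moreover, each $\ker\beta^\flat_s$ becomes the unit ideal in the total ring of fractions.

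The other parts of your plan are fine in outline: the construction of $\beta^\flat_s$, the non-maximality of $\ker\beta^\flat_s$ via the Jacobson radical, and the reduction of the third bijection to $\bar k\otimes_k\cdots\otimes_k\bar k$. For the first two bijections, what your argument can honestly deliver is a bijection between $\mathfrak{S}$ and the closed (equivalently, maximal) primes of $\mathcal{O}^\flat_{\C,\Delta}[1/\varpi^\flat]$. It does not reach the whole spectrum.
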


We note that $\G_{K,\Delta}$ acts continuously on $\C^\flat_\Delta$ by the formula $g\cdot(x^{(n)})=(gx^{(n)})_{n\in\mathbb{N}}$ for $g\in\G_{K,\Delta}$ and $(x^{(n)})_{n\in\mathbb{N}}\in\C^\flat_\Delta$.

\section{The de Rham period rings}

We now define the analogue of the Fontaine period ring $\Ainf$. This definition is not the same as given in \cite{BCM}, but this is more closely related to classical setups.

\begin{definition}
    The \emph{Fontaine period ring} $\mAinf$ is defined as
    \begin{align*}
        \mAinf:=W(\mathcal{O}^\flat_{\C,\Delta}).
    \end{align*}
\end{definition}

\begin{remark}
    \begin{enumerate}
        \item[(i)]  Every element of $\mAinf$ has a T\'eichmuller expansion: that is for every $x\in\mAinf$, there exists $(c_n)_{n\in\mathbb{N}}\in\mathcal{O}^\flat_{\C,\Delta}$ such that
        \begin{align*}
          x=[c_0]+[c_1]p+[c_2]p^2+\cdots=\sum\limits_{k=0}^\infty[c_k]p^k.
        \end{align*}
        \item[(ii)] $\G_{K,\Delta}$ acts naturally on $\mAinf$ by the formula $g\cdot x=\sum\limits_{k=0}^\infty[g\cdot c_k]p^k$.
    \end{enumerate}
   
\end{remark}

\begin{definition}
    The \emph{Fontaine theta map} is defined as the homomorphism
    \begin{align*}
      \theta_\Delta:  \mAinf&\to \mathcal{O}_{\C,\Delta}\\
        \sum\limits_{i=0}^\infty c_n p^n&\mapsto \sum\limits_{i=0}^\infty c^\#p^n
    \end{align*}
\end{definition}

\begin{remark}
    \begin{enumerate}
        \item[(i)] This map makes sense because $\mathcal{O}_{\C,\Delta}$ is $p$-adically complete.
        \item[(ii)] Note that going modulo $p$, the above map reduces to the ring homomorphism $\#:\mathcal{O}^\flat_{\C,\Delta}\to \mathcal{O}_{\C,\Delta}/(p)$. In fact, this map is defined through the functoriality of Witt vectors.
        \item[(iii)] $\theta_\Delta$ is $\G_{K,\Delta}-$equivariant. Indeed, for $g\in\G_{K,\Delta}$ and $x=\sum\limits_{i=0}^\infty[x_i]p^i\in\mAinf$, we get
      \begin{align*}
          \theta_\Delta(gx)=\sum\limits_{i=0}^\infty g(x^\#_i)p^i=g\sum\limits_{i=0}^\infty x^\#_ip^i=g\theta_\Delta(x).
      \end{align*}
    \end{enumerate}
\end{remark}

Note that the ring homomorphism $\theta_\Delta$ is surjective. Therefore, we want to find the kernel.\\

Choose an element $\varpi\in \mathcal{O}^\flat_{\C,\Delta}$ such that $\varpi^\#=-p$, which exists because of surjectivity of $\#$. We know that $|\varpi|_s=|p|=\frac{1}{p}$ for every $s\in \mathfrak{S}$, and so if $\varpi'^\#=-p$, then $\varpi$ and $\varpi'$ differs by an unit. Set 
\begin{align*}
    \xi=[\varpi]+p=(\varpi,1,0,\cdots)\in\mAinf,
\end{align*}
then $\theta_\Delta(\xi)=\varpi^\#+p=0$. Therefore $\xi\in\ker\theta_\Delta$.

\begin{proposition}
    The kernel $\ker\theta_\Delta$ is a principal ideal in $\mAinf$ generated by $\xi$.
\end{proposition}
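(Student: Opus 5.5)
We follow the classical argument for $\ker\theta=(\xi)$ in $\Ainf$, which uses only three facts: $\mAinf$ is $p$-adically complete and separated, $p$ is a non-zero-divisor, and the situation modulo $p$ is understood. All three hold here. $\mAinf=W(\mathcal{O}^\flat_{\C,\Delta})$ with $\mathcal{O}^\flat_{\C,\Delta}$ perfect of characteristic $p$, so $\mAinf$ is $p$-torsion free, $p$-adically complete, and $\mAinf/p\cong\mathcal{O}^\flat_{\C,\Delta}$. The target $\mathcal{O}_{\C,\Delta}$ is also $p$-torsion free and $p$-adically complete, since it sits inside $\C_\Delta$.

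\emph{Step 1: reduction modulo $p$.} We will show that the kernel of $\bar\theta=\#\bmod p:\mathcal{O}^\flat_{\C,\Delta}\to\mathcal{O}_{\C,\Delta}/(p)$ is the ideal $(\varpi)$. The map $\bar\theta$ sends $(x_n)$ to $x_0$, so $x\mapsto x_0$ is the projection to the zeroth coordinate. Hence $\ker\bar\theta=\{x: x^\#\in p\mathcal{O}_{\C,\Delta}\}$.
\begin{itemize}
\item If $x=\varpi y$, then $x^\#=-p\,y^\#\in p\mathcal{O}_{\C,\Delta}$.
\item Conversely, if $x^\#\in p\mathcal{O}_{\C,\Delta}$, then $|x|_\flat=\sup_s|\beta_s(x^\#)|\le |p|=|\varpi|_\flat$. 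Moreover $|\varpi|_s=1/p$ for every $s$, so $\varpi$ is a non-zero-divisor, indeed a unit in $\C^\flat_\Delta$. Put $y=x/\varpi\in\C^\flat_\Delta$. Then $|y|_\flat\le1$ by multiplicativity of each $|\cdot|_s$.
\end{itemize}
The remaining point is that $|y|_\flat\le 1$ forces $y\in\mathcal{O}^\flat_{\C,\Delta}$. This is the integrality statement $\mathcal{O}^\flat_{\C,\Delta}=\{y\in\C^\flat_\Delta:|y|_\flat\le1\}$, which I would deduce from the corresponding fact $\mathcal{O}_{\C,\Delta}=\{|x|\le 1\}$ for the spectral seminorm (from \cite{Pok-Pal}), applied coordinatewise to $y^{(n)}$. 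I expect this integrality step to be the main obstacle. The norm $|\cdot|_\Delta$ and the spectral seminorm are only equivalent, so "power-bounded equals unit ball" needs care. One fallback is to observe that $y$ power-bounded gives $y^{(n)}$ power-bounded in $\C_\Delta$, and to use that $\mathcal{O}_{\C,\Delta}$ is the subring of power-bounded elements up to a bounded multiple. Perfectness then removes the discrepancy: if $p^\epsilon y^{(n)}\in\mathcal{O}_{\C,\Delta}$ for all $n$, take $p^{1/p^n}$-roots to conclude.

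\emph{Step 2: lift to $\mAinf$.} First, $\theta_\Delta(\xi)=0$, so $(\xi)\subseteq\ker\theta_\Delta$. For the reverse inclusion take $x\in\ker\theta_\Delta$. Its image mod $p$ lies in $\ker\bar\theta=(\varpi)$. Since $\xi\equiv[\varpi]\pmod p$, we can write $x=\xi y_0+p x_1$. Then $\theta_\Delta(px_1)=0$, and as $\mathcal{O}_{\C,\Delta}$ is $p$-torsion free, $\theta_\Delta(x_1)=0$. Iterating gives
\[x=\xi(y_0+py_1+p^2y_2+\cdots)+p^{n}x_n\]
for every $n$. By $p$-adic completeness of $\mAinf$ the series $y=\sum p^ky_k$ converges, and $x-\xi y\in\bigcap_n p^n\mAinf=0$. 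Hence $x=\xi y$.

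Step 2 is formal once Step 1 holds. As a supplement, one can also check that $\xi$ is a non-zero-divisor. This follows because $\xi$ modulo $p$ is $\varpi$, a non-zero-divisor, and $\mAinf$ is $p$-adically separated and $p$-torsion free. It is not needed for principality itself.
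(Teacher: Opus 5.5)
Your route is the paper's own. The paper first reduces to $\ker\theta_\Delta\subseteq(\xi,p)$ by the same $p$-adic iteration as your Step 2, which is fine; you also correctly invoke completeness of $\mAinf$ rather than of $\mathcal{O}_{\C,\Delta}$. It then gets $\ker\theta_\Delta\subseteq(\xi,p)$ from the one-line assertion that $-p\mid x_0^\#$ implies $\varpi\mid x_0$. So the proposition rests entirely on the step you flag, and your proposal does not close it.

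Put $y^{(n)}=x^{(n)}/\varpi^{(n)}\in\C_\Delta$. Then $(y^{(n)})^{p^n}=x^{(0)}/(-p)\in\mathcal{O}_{\C,\Delta}$. So the exact input needed is that $\mathcal{O}_{\C,\Delta}$ be $p$-root closed in $\C_\Delta$. One way to have this is $\mathcal{O}_{\C,\Delta}=\{z:\sup_s|\beta_s(z)|\le 1\}$, in which case your Step 1 is immediate from $|y^{(n)}|=|y^{(0)}|^{1/p^n}\le 1$. Equivalence of topologies only bounds the discrepancy, and your fallback does not remove it. From $p^\epsilon y^{(n+m)}\in\mathcal{O}_{\C,\Delta}$ and $y^{(n)}=(y^{(n+m)})^{p^m}$ you get only $p^{\epsilon p^m}y^{(n)}\in\mathcal{O}_{\C,\Delta}$, so passing to roots goes in the wrong direction.

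This is a genuine obstruction, not a technicality. When the completed tensor product is the $p$-adic one (as in \cite{BCM}), the required property fails, and so does the proposition. Take $K_0=\mathbb{Q}_p$, $t=2$, and a compatible system $\pi_n\in\mathcal{O}_\C$ with $\pi_0=-p$. Let $\varpi=(\pi_n\otimes 1)_n$ and $\varpi'=(1\otimes\pi_n)_n$; both satisfy $\varpi^\#=\varpi'^\#=-p$.
\begin{itemize}
\item Modulo $\pi_1\otimes 1$ the ring becomes $\mathcal{O}_\C/\pi_1\otimes_{\mathbb{F}_p}\mathcal{O}_\C/p$, in which $\bar 1\otimes\bar\pi_1\neq 0$.
\item Hence the $p$-th root of unity $u_1=(1\otimes\pi_1)/(\pi_1\otimes 1)\in\C_\Delta$, which has spectral norm $1$, does not lie in $\mathcal{O}_{\C,\Delta}$.
\item Since multiplication in $\varprojlim_{x\mapsto x^p}\mathcal{O}_{\C,\Delta}$ is componentwise, $\varpi'\notin\varpi\,\mathcal{O}^\flat_{\C,\Delta}$.
\item Therefore $[\varpi']+p\in\ker\theta_\Delta\setminus(\xi)$, consistent with the $t$ generators found in \cite{BCM}.
\end{itemize}
The same example refutes your fallback: the components $u_n$ of $\varpi'/\varpi$ all satisfy $p\,u_n=-(\pi_n^{p^n-1}\otimes\pi_n)\in\mathcal{O}_{\C,\Delta}$, yet $u_1\notin\mathcal{O}_{\C,\Delta}$.

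To complete the argument you must either define $\mathcal{O}_{\C,\Delta}$ as the closed unit ball of $\sup_s|\beta_s(\cdot)|$, or prove $p$-root closedness for the paper's ring. Neither your proposal nor the paper's proof supplies this.
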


\begin{proof}
    We claim that it is enough to show $\ker\theta_\Delta\subset(\xi,p)$. Indeed, if $x\in\ker\theta_\Delta$ satisfies $x=\xi y_0+px_1$, then $\theta_\Delta(x)=0=p\theta_\Delta(x_1)$. This gives $x_1\in\ker\theta_\Delta$. Thus $x_1=\xi y_1+px_2$ showing $x_2\in\ker\theta_\Delta$. By induction, for every $n\in\mathbb{N}$, there exists $x_n\in\ker\theta_\Delta$ such that $x=\xi y_0+p\xi y_1+\cdots+p^{n-1}\xi y_{n-1}+p^n x_n$. But $\mathcal{O}_{\C,\Delta}$ is $p-$adically seperated and complete, we must have $x\in (\xi)$.

    Now assume $x=\sum\limits_{i=0}^\infty [x_i]p^i$, then $\theta_\Delta(x)=\sum\limits_{i=0}^\infty x^\#_ip^i=0$. This means that $-p\mid x^\#_0$ and so $\varpi\mid x_0$. Hence there exists $b_0\in\mathcal{O}^\flat_{\C,\Delta}$ such that $x_0=b_0\varpi$. Let $b=[b_0]$, then 
    \begin{align*}
        x-b\xi&=\sum\limits_{i=0}^\infty [x_i]p^i-[b_0]([\varpi]+p)
              =[x_0]+\sum\limits_{i=1}^\infty[x_i]p^i-[b_0][\varpi]+p[b_0]
              =p\cdot *\in p\mAinf.
    \end{align*}
    Therefore, $x\in (\xi,p)$, which we wanted to show.

    Suppose $x\in \bigcap\limits_{i=1}^\infty(\ker\theta_\Delta)^n=\bigcap\limits_{i=1}^\infty(\xi^n)$, then for all $n$, we have $x=\xi^n y_n$ for some $y_n$. This shows $x^\#=-p^ny^\#_n$ for every $n=1,2,\cdots$, proving $x^\#=0$ in $\mathcal{O}_{\C,\Delta}$. Hence $x=0$.
\end{proof}

\begin{remark}
    \begin{enumerate}
        \item[(i)]  In \cite{BCM}, the authors proved that $\ker\theta_\Delta$ is generated by $t$-many elements, but here we are saying they differ by units.
    \end{enumerate}
\end{remark}

Since $\mathcal{O}^\flat_{\C,\Delta}$ is a perfect ring of characteristics $p$, the ring $\mAinf$ is $p-$torsion free, $p$-adically complete ring with $\mAinf/(p)\cong \mathcal{O}^\flat_{\C,\Delta}$. Thus the ring homomorphism $\theta_\Delta$ extended to a surjective $\G_{K,\Delta}$ equivariant surjective ring homomorphism  
\begin{align*}
   \theta_\Delta: \mAinf\left[\frac{1}{p}\right]&\to {\C}_\Delta\\
   \sum\limits_{i\geq n}[x_i]p^i &\mapsto \sum\limits_{i\geq n} x^\#p^i
\end{align*}
whose kernel is still generated by $\xi$. The following definition of de Rham period ring is closely related to one given in \cite{BCM}:

\begin{definition}
    \begin{enumerate}
        \item[(i)]\footnote{Here, $\bullet^{(p)}$ stands for pre, this is because we will see later that these period rings are not well behaved and have a different ring theoretical structure than the de Rham period ring in classical $p$-adic Hodge Theory.} The \emph{integral multivariate pre-de Rham period ring} $\pmBdrp$  is defined as
        \begin{align*}
            \pmBdrp:=\varprojlim\limits_{n\in\mathbb{N}}\mAinf\left[\frac{1}{p}\right]/(\xi^n)
        \end{align*}
        \item[(ii)] The \emph{multivariate pre-de Rham period ring} $\pmBdr$ is defined as
        \begin{align*}
            \pmBdr:=\pmBdrp\left[\frac{1}{\xi}\right].
        \end{align*}
    \end{enumerate}
\end{definition}

Let $s\in\mathfrak{S}$ be a family of embeddings, then we have the induced ring homomorphism

\begin{align*}
    \mAinf\left[\frac{1}{p}\right]&\to \Ainf\left[\frac{1}{p}\right].\\
     \sum\limits_{i\geq n}[x_i]p^i&\mapsto \sum\limits_{i\geq n}[\beta^\flat_s(x_i)]p^i
\end{align*}

Note that $\xi\in\mAinf$ is mapped to a generator of $\ker\theta$ of the usual $p$-adic Hodge Theory of \cite{Fontaine}. Therefore, the above map generates a surjective ring homomorphism 
\begin{align*}
    \Phi_s:\pmBdrp\to \Bdrp.
\end{align*}

Recall that $\Bdrp$ is a complete discrete valuation ring whose residue field is $\C$ and whose field of fractions is $\Bdr$. Note that $\xi_s=\Phi_s(\xi)$ satisfies $\xi_s=[\beta^\flat_s(\varpi)]+p\in\Ainf$, and hence generates the kernel of the usual Fontaine's $\theta-$map. The ring $\Bdr$ is equipped with a continuous action of $\G_K$ for any $p$-adic field $K$, and in particular, of $\G_{K_s}$ for $K_s=s_1(K_1)\cdots s_t(K_t)$. Note that we also have an injective homomorphism $\G_{K_s}\to \G_{K,\Delta}$ given by $g\mapsto (s^{-1}_1\circ g\circ s_1,\cdots,s^{-1}_t\circ g\circ s_t)$. These morphisms, $K_\Delta\to K_s$ and $\G_{K_s}\to \G_{K,\Delta}$, are compatible with the corresponding actions on $\C_\Delta$. That is, for every $g\in \G_{K_s}$ and $x_1\otimes\cdots\otimes x_t\in C_\Delta$, we have
\begin{align*}
   \beta_s((s^{-1}_1\circ g\circ s_1,\cdots,s^{-1}_t\circ g\circ s_t)(x_1\otimes\cdots\otimes x_t))=g(s_1 x_1)\cdots g(s_t x_t)=g\beta_s(x_1\cdots x_t).
\end{align*}
Similarly, this injection is compatible with the actions defined for $\mathcal{O}_{\C,\Delta}, \mBdrp, \mBdr$ and corresponding rings in single-variable setups, for each $s\in\mathfrak{S}$.\\

Also, consider the surjective ring homomorphism
\begin{align*}
    \overline{\Phi}_s:\pmBdrp\to \C
\end{align*}
by composing the ring homomorphism $\Phi_s$ with the canonical surjection $\theta:\Bdrp\to \C$. Furthermore, $\theta_\Delta$ induces a surjective ring homomorphism 
\begin{align*}
    \theta_\Delta:\pmBdrp\to\C_\Delta
\end{align*}
with kernel $\xi\pmBdrp$ such that the diagram
\[
\begin{tikzcd}
    \pmBdrp \arrow[r,"\Phi_s"]\arrow[d,swap,"\theta_\Delta"]& \Bdrp\arrow[d,"\theta"] \\
    \C_\Delta \arrow[r,"\beta_s"]     & \C        
\end{tikzcd}
\]
commutes in the category of rings. Indeed, if $x=\sum\limits_{i\geq n}[x_i]p^i\in\pmBdrp$ has image $\sum\limits_{i\geq n}\beta^\flat_s(x_i)^\#p^i$ under clockwise composition and has image $\sum\limits_{i\geq n}\beta_s(x^\#_i)p^i$ under anticlockwise composition, which agrees. Thus, we have the following proposition

\begin{proposition}\label{Prime Ideal of de Rham period ring}
    We have a one-to-one correspondence between 
    \begin{align*}
        \mathfrak{S}\leftrightarrow\spm\pmBdrp.
    \end{align*}
\end{proposition}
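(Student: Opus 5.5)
The map $\mathfrak{S}\to\spm\pmBdrp$ will be $s\mapsto \ker\overline{\Phi}_s$, where $\overline{\Phi}_s=\theta\circ\Phi_s$. Since $\overline{\Phi}_s$ is surjective onto the field $\C$, its kernel $\mathfrak{M}_s$ is a maximal ideal. By the commutative square, $\overline{\Phi}_s=\beta_s\circ\theta_\Delta$, so
\[
\mathfrak{M}_s=\theta_\Delta^{-1}(\mathfrak{P}_s).
\]
Injectivity then follows at once. The map $\theta_\Delta:\pmBdrp\to\C_\Delta$ is surjective, so $\theta_\Delta(\theta_\Delta^{-1}(\mathfrak{P}_s))=\mathfrak{P}_s$. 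Hence $\mathfrak{M}_s=\mathfrak{M}_{s'}$ forces $\mathfrak{P}_s=\mathfrak{P}_{s'}$, and the classification of $\spec\C_\Delta$ (\cite{Pok-Pal}, Proposition 2.4) gives $s=s'$ in $\mathfrak{S}$.

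For surjectivity, the key claim is that $\xi$ lies in the Jacobson radical of $\pmBdrp$. Granting this, any maximal ideal $\mathfrak{M}$ contains $\xi\pmBdrp=\ker\theta_\Delta$. It then corresponds to a maximal ideal of $\pmBdrp/\xi\cong\C_\Delta$, which is some $\mathfrak{P}_s$, and so $\mathfrak{M}=\theta_\Delta^{-1}(\mathfrak{P}_s)=\mathfrak{M}_s$.

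To prove the claim, it suffices to show that $1+\xi y$ is a unit for every $y\in\pmBdrp$. The candidate inverse is the series $\sum_{n\ge0}(-\xi y)^n$. This converges in $\pmBdrp=\varprojlim_n \mAinf[1/p]/(\xi^n)$ because its image modulo $\xi^n$ is a finite sum for each $n$. The partial sums are compatible across $n$, so they define an element of the inverse limit, and this element inverts $1+\xi y$ in each quotient, hence in the limit.

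The main obstacle I expect is justifying that $\ker(\theta_\Delta:\pmBdrp\to\C_\Delta)$ is exactly $\xi\pmBdrp$, that is, the $\xi$-adic completeness bookkeeping. The paper asserts this, so I will invoke it. If an argument is needed, I would show that $\xi$ is a nonzerodivisor in $\mAinf[1/p]$. This would use the Teichmüller expansion and the fact that $\varpi$ is a nonzerodivisor in $\mathcal{O}^\flat_{\C,\Delta}$, being a unit times a pseudo-uniformizer at every $s$. With that, $\pmBdrp/\xi\cong\mAinf[1/p]/\xi\cong\C_\Delta$ follows in the standard way for completions along a regular element.
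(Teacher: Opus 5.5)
Your proposal is correct and follows essentially the same route as the paper: $\xi$ lies in the Jacobson radical, so every maximal ideal contains $\ker\theta_\Delta$ and is the preimage of some $\mathfrak{P}_s$, i.e.\ equals $\ker\overline{\Phi}_s=\theta_\Delta^{-1}(\mathfrak{P}_s)$. Where the paper only asserts the Jacobson-radical claim and injectivity, you prove both. The obstacle you flag is harmless for two reasons. First, $\widehat{A}/f\widehat{A}\cong A/fA$ holds for the $f$-adic completion along any principal ideal, with no regularity needed. Second, you never need the exact identification: every element of $\ker(\theta_\Delta:\pmBdrp\to\C_\Delta)$ is nilpotent modulo each $\xi^n$, so the whole kernel already lies in the Jacobson radical.
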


\begin{proof}
    For $s\in\mathfrak{S}$, the kernel $\ker\overline{\Phi}_s$ is a maximal ideals of $\pmBdrp$ and $\ker\Phi_s$ is a prime ideal of $\pmBdrp$ which is not maximal. The prime ideal $\ker\Phi_s$ does not contains the kernel $\ker\theta_\Delta=(\xi)$, but $\ker\overline{\Phi}_s$ does. Suppose $\mathfrak{Q}$ be a prime ideal which contains $\ker\theta_\Delta$, then $\theta_\Delta(\mathfrak{Q})$ is a prime ideal of $\C_\Delta$ and thus is equal to either $(0)$ or $\mathfrak{P}_s$ for some $s\in\mathfrak{S}$. Since $(\xi)$ is in the Jacobson radical of $\pmBdrp$, any maximal ideal contains $\ker\theta_\Delta$. If $\mathfrak{Q}\in\spm\pmBdrp$, then we claim that $\mathfrak{Q}=\ker\overline{\Phi}_s$. Note that $\ker\overline\Phi_s=\ker\Phi_s+(\xi)$. If $x\in\ker\Phi_s$ then 
    \begin{align*}
       \beta_s\circ\theta_\Delta(x)=\theta\circ\Phi_s(x)=0\implies \theta_\Delta(x)\in\mathfrak{P}_s=\theta_\Delta(\mathfrak{Q})
    \end{align*}
    Since if $\mathfrak{Q}$ is maximal, then $\theta_\Delta(\mathfrak{Q})$ is a maximal ideal in $\C_\Delta$. Thus we have $\theta_\Delta(x-q)=0$ for some $q\in\mathfrak{Q}$. In this case $x-q\in \ker\theta_\Delta\subset\mathfrak{Q}$. Therefore, $\ker\overline{\Phi}_s=\ker\Phi_s+\ker\theta_\Delta\subset\mathfrak{Q}$. But $\ker\overline{\Phi}_s$ is maximal, therefore we have $\mathfrak{Q}=\ker\overline\Phi_s$. Note that if for $s.t\in\mathfrak{S}$, we have $\ker\overline\Phi_s=\ker\overline\Phi_t$, then we have $s=t$.
\end{proof}

\begin{remark}\label{Prime ideals containing the kernel}
    If $\mathfrak{Q}$ is a prime ideal containing $\xi^n,\;n\in\mathbb{N}^*$, then $\xi\in\mathfrak{Q}$. This shows that this prime ideal contains $\ker\theta_\Delta$. The above proof also shows that maximal prime ideals that do not contain $\ker\theta_\Delta$ are of the form $\ker\Phi_s$.
\end{remark}

\begin{corollary}\label{Prime ideal of prim-integral period ring}
    We have a one-to-one correspondence between
    \begin{align*}
        \mathfrak{S}\leftrightarrow\spm\pmBdr.
    \end{align*}
    Moreover, the intersection of maximal ideals is $\{0\}$.
\end{corollary}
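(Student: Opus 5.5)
The natural plan is to use the standard description of primes of a localization. Since $\pmBdr=\pmBdrp[1/\xi]$, the prime ideals of $\pmBdr$ are in inclusion-preserving bijection with the primes $\mathfrak{Q}$ of $\pmBdrp$ with $\xi\notin\mathfrak{Q}$, via $\mathfrak{Q}\mapsto\mathfrak{Q}\pmBdr$. By Remark \ref{Prime ideals containing the kernel}, a prime of $\pmBdrp$ omits $\xi$ exactly when it does not contain $\ker\theta_\Delta$. The candidates are therefore the ideals $\ker\Phi_s$, $s\in\mathfrak{S}$. These are prime, since $\Bdrp$ is a domain, and they do not contain $\xi$, since $\Phi_s(\xi)=\xi_s\neq 0$. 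We set $\mathfrak{M}_s:=\ker\Phi_s\cdot\pmBdr$.

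\textbf{Step 1: each $\mathfrak{M}_s$ is maximal.} Because $\Phi_s$ is surjective onto $\Bdrp$ and sends $\xi$ to $\xi_s$, it extends to a surjection $\pmBdr\to\Bdrp[1/\xi_s]=\Bdr$ with kernel $\mathfrak{M}_s$. The target $\Bdr$ is a field, so $\mathfrak{M}_s$ is maximal.

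\textbf{Step 2: every maximal ideal is some $\mathfrak{M}_s$.} Let $\mathfrak{M}$ be maximal in $\pmBdr$ and put $\mathfrak{Q}=\mathfrak{M}\cap\pmBdrp$. This is a prime not containing $\xi$, and it is maximal among such primes. The proof of Proposition \ref{Prime Ideal of de Rham period ring}, as summarized in Remark \ref{Prime ideals containing the kernel}, says that such primes are of the form $\ker\Phi_s$. Hence $\mathfrak{Q}=\ker\Phi_s$ and $\mathfrak{M}=\mathfrak{M}_s$.

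\textbf{Step 3: injectivity of $s\mapsto\mathfrak{M}_s$.} Suppose $\mathfrak{M}_s=\mathfrak{M}_t$. Contracting to $\pmBdrp$ gives $\ker\Phi_s=\ker\Phi_t$. Adding $(\xi)$ gives $\ker\overline\Phi_s=\ker\overline\Phi_t$, which forces $s=t$ by Proposition \ref{Prime Ideal of de Rham period ring}.

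\textbf{The main obstacle: the intersection is zero.} It suffices to show $\bigcap_s\ker\Phi_s=0$ in $\pmBdrp$, since $\xi$ is a nonzerodivisor there. To see that $\xi$ is a nonzerodivisor, note that $\xi x=0$ gives $\theta_\Delta$-adic information level by level in the inverse limit. Now take $x$ with $\Phi_s(x)=0$ for all $s$. Then $\beta_s(\theta_\Delta(x))=0$ for all $s$. Since the maximal ideals $\mathfrak{P}_s$ of the reduced ring $\C_\Delta$ intersect to zero (equivalently, since $|\cdot|$ is a norm), we get $\theta_\Delta(x)=0$, so $x=\xi x_1$. Moreover $\Phi_s(x_1)=0$ for all $s$, because $\xi_s$ is a nonzerodivisor in the DVR $\Bdrp$.

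Iterating gives $x\in\bigcap_n\xi^n\pmBdrp$. This set is zero because $\pmBdrp$ is $\xi$-adically separated as an inverse limit of the quotients $\mAinf[1/p]/(\xi^n)$. This step therefore rests on two facts: that $\xi$ is regular in $\pmBdrp$, and that $\pmBdrp/\xi^n$ agrees with the $n$-th term of the limit, which requires the $\xi$-adic separatedness argument at the end of the kernel proposition. I would verify both from the presentation as an inverse limit.
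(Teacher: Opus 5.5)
Your proof is correct. For the bijection it is the paper's argument. You invert $\xi$, use Remark~\ref{Prime ideals containing the kernel} to identify the primes of $\pmBdrp$ that are maximal among those avoiding $\xi$ with the $\ker\Phi_s$, and read off the maximal ideals of $\pmBdr$ as the kernels of the extensions $\Phi_s\colon\pmBdr\to\Bdr$. You also make explicit two things the paper leaves implicit: maximality (the target is a field) and injectivity (contract, add $(\xi)$, and apply Proposition~\ref{Prime Ideal of de Rham period ring}).

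The ``moreover'' part is where you genuinely differ. The paper says in one line that the kernels of the maps $\mAinf\to\Ainf$ intersect to zero ``by definition'' and concludes $\bigcap_s\ker\Phi_s=\{0\}$. But joint injectivity on $\mAinf$ does not by itself survive the passage to $\varprojlim_n\mAinf[1/p]/(\xi^n)$: one still needs $\mAinf[1/p]/(\xi^n)\to\prod_s\Ainf[1/p]/(\xi_s^n)$ to be injective for each $n$. Your $\xi$-adic d\'evissage supplies exactly this. Joint injectivity is used only on the bottom layer $\C_\Delta$, where it follows from reducedness, and you climb the filtration using that $\xi_s$ is regular in the domain $\Bdrp$. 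Your route is a few lines longer, but it actually proves the statement at the level of $\pmBdrp$. Its key input is the paper's assertion $\ker(\theta_\Delta\colon\pmBdrp\to\C_\Delta)=\xi\pmBdrp$, together with the commutative square $\beta_s\circ\theta_\Delta=\theta\circ\Phi_s$.

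Two of the facts you flag are unnecessary.
\begin{itemize}
\item An element $x/\xi^m$ lies in every $\mathfrak{M}_s$ iff $\Phi_s(x)=0$ for all $s$, since $\Bdrp$ has no $\xi_s$-torsion. So regularity of $\xi$ in $\pmBdrp$ plays no role.
\item $\bigcap_n\xi^n\pmBdrp=0$ is automatic, because $\xi^n\pmBdrp$ maps to zero in $\mAinf[1/p]/(\xi^n)$.
\end{itemize}

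On the other hand, your Step~2, like the paper's proof, leans on Remark~\ref{Prime ideals containing the kernel}. The proof of Proposition~\ref{Prime Ideal of de Rham period ring} does not really establish that remark, since it only handles primes containing $\xi$. It can be filled in as follows.
\begin{itemize}
\item A prime $\mathfrak{Q}\not\ni\xi$ singles out a point $s$ through the idempotents it contains. Idempotents of $\C_\Delta$ lift uniquely to $\pmBdrp$, and those not in $\mathfrak{Q}$ are the lifts of the $f$ with $\beta_s(f)=1$.
\item Any $a\in\C_\Delta$ becomes $0$ or a unit after multiplication by a suitable idempotent $f$ with $\beta_s(f)=1$, according as $\beta_s(a)=0$ or not.
\item By induction on $n$, using $\ker\theta_\Delta=\xi\pmBdrp$: if $\Phi_s(h)=\xi_s^n u$ with $u$ a unit of $\Bdrp$, then $eh=\xi^n v$ for some idempotent $e\notin\mathfrak{Q}$ and some unit $v$ of $e\pmBdrp$.
\end{itemize}
Hence $h\notin\mathfrak{Q}$, so $\mathfrak{Q}\subseteq\ker\Phi_s$, and maximality gives equality.
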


\begin{proof}
   Since we have $\pmBdr=\pmBdrp\left[\frac{1}{\xi}\right]$ and the maximal prime ideals which do not contains $\xi$ are of the form $\ker\Phi_s$, the localization $\pmBdr$ has the maximal ideals equals to extension $\Phi_s:\pmBdr\to \Bdr$ of $\Phi_s:\pmBdrp\to\Bdrp$. Note that by definition, the intersection of the kernels of the ring homomorphisms $\mAinf\to\Ainf$ corresponding to $s\in\mathfrak{S}$ is equal to zero, $\bigcap\limits_{s\in\mathfrak{S}}\ker\Phi_s=\{0\}$, which we wanted to prove.
\end{proof}

We now recall the following definition from classical ring theory.

\begin{definition}\label{Profinite completion}
    Let $R$ be a ring and let $\{\mathfrak{m}_i\}_{i\in\Lambda}$ be a family of maximal ideals of $R$ such that $\bigcap\limits_{i\in\Lambda}\mathfrak{m}_\lambda=\{0\}$. Let $\Lambda'\subset\Lambda$ be a finite subset, and define $\mathfrak{a}_{\Lambda'}=\bigcap\limits_{i\in\Lambda'}\mathfrak{m}_i$, then the \emph{profinite completion} of $R$ with respect to the family $\{\mathfrak{m}_i\}_{i\in\Lambda}$ is defined to be
    \begin{align*}
        \hat{R}:=\varprojlim\limits_{\Lambda'\subset\Lambda\text{ finite }}R/\mathfrak{a}_{\Lambda'}.
    \end{align*}
\end{definition}

Note that there is a monomorphism $R\hookrightarrow\hat{R}$ because the kernel is $\bigcap\limits_{\Lambda'\subset\Lambda\text{ finite }}\mathfrak{a}_{\Lambda'}=\bigcap\limits_{i\in\Lambda}\mathfrak{m}_i=\{0\}$. Moreover, $\hat{R}$ is a VNR ring with maximal ideals given by $\mathfrak{M}=\varprojlim\limits_{\Lambda'\subset\Lambda\text{ finite }}\mathfrak{m}/\mathfrak{a}_{\Lambda'}$, whose residue field is isomorphic to $R/\mathfrak{m}$. This ring has a natural topology where the fundamental system of neighborhoods of 0 is given by $\bigcap\limits_{i\in\Lambda'}\mathfrak{M}_i$ with $\Lambda'\subset\Lambda$ finite, which we call the \emph{profinite topology}.\\

Using this, we define the multivariate de Rham period ring:

\begin{definition}
    The \emph{multivariate de Rham period ring} $\mBdr$ is defined to be the profinite completion of $\pmBdr$ with respect to the family of ideals $\{\ker\Phi_s\}_{s\in\mathfrak{S}}$. The \emph{integral $\Delta$-de Rham period ring} is the closure of $\pmBdrp$ inside $\mBdr$.
\end{definition}

\begin{proposition}
    There exists a one-to-one correspondence 
    \begin{align*}
        \mathfrak{S}&\leftrightarrow\spec\mBdr,\\
            s       &\mapsto \ker\Phi_s.
    \end{align*}
    In particular, $\mBdr$ is a VNR ring. There exists a one-to-one correspondence
    \begin{align*}
          \mathfrak{S}&\leftrightarrow \spec\mBdrp\setminus \spm\mBdrp\\
                    s &\mapsto \ker\Phi_s\\
       \text{and }\;\mathfrak{S}&\leftrightarrow\spm\mBdrp\\
                              s &\mapsto \ker\Phi_s+(\xi).
    \end{align*}
    In particular, $\mBdrp$ is of Krull dimension 1.
\end{proposition}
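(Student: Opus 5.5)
We plan to deduce everything from the general properties of profinite completions recorded after Definition \ref{Profinite completion}, together with Corollary \ref{Prime ideal of prim-integral period ring}. Write $R=\pmBdr$ and $\mathfrak{m}_s=\ker\Phi_s$ for $s\in\mathfrak{S}$. By Corollary \ref{Prime ideal of prim-integral period ring}, these are exactly the maximal ideals of $R$ and $\bigcap_s\mathfrak{m}_s=0$, so the completion $\hat R=\mBdr$ is defined and $R\hookrightarrow\hat R$.

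\emph{Step 1: structure of the finite quotients.} For a finite $\Lambda'\subset\mathfrak{S}$, the ideals $\mathfrak{m}_s$, $s\in\Lambda'$, are pairwise comaximal, so the Chinese remainder theorem gives $R/\mathfrak{a}_{\Lambda'}\cong\prod_{s\in\Lambda'}R/\mathfrak{m}_s\cong\prod_{s\in\Lambda'}\Bdr$, using that $\Phi_s\colon R\to\Bdr$ is surjective with kernel $\mathfrak{m}_s$. The transition maps are the projections, so $\hat R\cong\varprojlim_{\Lambda'}\prod_{s\in\Lambda'}\Bdr\cong\prod_{s\in\mathfrak{S}}\Bdr$ as rings, and $\hat\Phi_s$ is the $s$-th projection.

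\emph{Step 2: $\spec\mBdr$.} A product of fields is reduced and every element $x=(x_s)$ has the quasi-inverse $y=(y_s)$ with $y_s=x_s^{-1}$ or $0$; hence $\mBdr$ is VNR, so it has dimension zero and every prime is maximal. The ideals $\ker\hat\Phi_s$ are maximal and distinct. The main obstacle is that, for infinite $\mathfrak{S}$, $\prod_s\Bdr$ also has maximal ideals coming from non-principal ultrafilters. Either the intended statement is the correspondence with those maximal ideals that are open in the profinite topology, or one must use the topology. I would therefore prove: a prime $\mathfrak{P}$ that is closed in the profinite topology contains some basic open ideal $\bigcap_{s\in\Lambda'}\ker\hat\Phi_s$ (closedness together with continuity of the quotient map to the discrete field $\mBdr/\mathfrak{P}$, which I would take as part of the notion of prime in this topological ring, as the paper does). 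Since $\mathfrak{P}$ is prime, it then contains some single $\ker\hat\Phi_s$, and maximality gives equality. Injectivity of $s\mapsto\ker\Phi_s$ follows because the idempotent $e_s$ lies in $\ker\hat\Phi_t$ exactly when $t\ne s$.

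\emph{Step 3: $\mBdrp$.} Its image under each $\hat\Phi_s$ is $\Bdrp$, so $\mBdrp\subset\prod_s\Bdrp$ is dense and closed; I would identify it with $\prod_s\Bdrp$, since $\pmBdrp$ surjects onto each finite product by the Chinese remainder theorem applied to the maximal ideals $\ker\Phi_s+(\xi)$ of Proposition \ref{Prime Ideal of de Rham period ring}. Continuous primes are then pulled back from a single factor $\Bdrp$, which is a DVR with primes $0$ and $(\xi_s)$. This yields the non-maximal primes $\ker\Phi_s$ and the maximal ideals $\ker\Phi_s+(\xi)$, and each chain has length one, so the Krull dimension is $1$.
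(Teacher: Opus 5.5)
Your route differs from the paper's, and it is more careful. The paper's proof only cites Proposition~\ref{Prime Ideal of de Rham period ring}, Corollary~\ref{Prime ideal of prim-integral period ring} and Remark~\ref{Prime ideals containing the kernel}. It implicitly relies on the claim after Definition~\ref{Profinite completion} that the maximal ideals of $\hat R$ are exactly the $\varprojlim\mathfrak m/\mathfrak a_{\Lambda'}$. You instead compute the completion. By the Chinese remainder theorem, $\mBdr\cong\prod_{s\in\mathfrak S}\Bdr$ with the product of discrete topologies, and $\mBdrp\cong\prod_{s\in\mathfrak S}\Bdrp$. This gives an honest proof of the VNR property and of the classification of open primes. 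It also shows that the paper's claim fails whenever $\mathfrak S$ is infinite, which happens as soon as $|\Delta|\ge 2$: the families $(\mathrm{id},\sigma,\mathrm{id},\dots,\mathrm{id})$ with $\sigma\in\G_{K_0}$ are pairwise inequivalent. Indeed, the finitely supported elements form a proper ideal containing every idempotent $e_s$, so any maximal ideal containing that ideal differs from every $\ker\hat\Phi_s$. Your restriction to open primes is therefore forced, not a matter of taste. Note also that Step~1 only needs the $\ker\Phi_s$ to be pairwise distinct maximal ideals with zero intersection, not that they exhaust $\spm\pmBdr$.

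Three points need tightening.

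First, in Step~2 you infer ``contains a basic open ideal'' from closedness, but the argument you give actually uses openness. Closedness suffices directly. Since $e_s(1-e_s)=0$, either $1-e_s\in\mathfrak P$ for some $s$, and then $\mathfrak P=\ker\hat\Phi_s$ by maximality. Or every $e_s\in\mathfrak P$, so $\mathfrak P$ contains the dense ideal $\bigoplus_s\Bdr$ and, being closed, is the whole ring. So closed and open primes coincide here.

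Second, surjectivity of $\pmBdrp\to\prod_{s\in\Lambda'}\Bdrp$ requires the Chinese remainder theorem for the ideals $\ker\Phi_s$ themselves. Applying it to $\ker\Phi_s+(\xi)$ only gives surjectivity onto $\prod_{s\in\Lambda'}\C$. Pairwise comaximality of the $\ker\Phi_s$ holds because $\xi$ lies in the Jacobson radical. Hence $\ker\Phi_s$ lies in the unique maximal ideal $\ker\Phi_s+(\xi)=\theta_\Delta^{-1}(\mathfrak P_s)$, and these are distinct for distinct $s$.

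Third, the caveat you raised for $\spec\mBdr$ must be repeated for the dimension claim, which is false as literally stated. Take a non-principal ultrafilter $\mathcal U$ on a countable subset $\{s_n\}\subset\mathfrak S$. The ultraproduct $V=\prod_s\Bdrp/\mathcal U$ is a valuation domain. The class of $(\xi_{s_n}^{\,n})_n$ is a nonzero element of the prime ideal $\bigcap_m\xi^mV$, and that prime does not contain $\xi$. Hence $0\subsetneq\bigcap_m\xi^mV\subsetneq\mathfrak m_V$, and the honest Krull dimension of $\mBdrp$ is at least $2$. Your conclusion ``Krull dimension $1$'' is correct only for the spectrum of open primes, and you should state it that way.
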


\begin{proof}
    The first part is immediate from Proposition \ref{Prime Ideal of de Rham period ring} and Corollary \ref{Prime ideal of prim-integral period ring}. The second part follows from Remark \ref{Prime ideals containing the kernel}.
\end{proof}

\begin{remark}
    It is easy to see that $\mBdrp$ can be constructed from $\pmBdrp$ by an analogous construction given in Definition \ref{Profinite completion} with respect to the family of ideals $\Big\{\ker\Phi_s:\pmBdrp\to\Bdrp\Big\}_{s\in\mathfrak{S}}$, whose intersection is also zero.
\end{remark}

\begin{proposition}
    The map $\theta_\Delta:\pmBdrp\to\C_\Delta$ has an extension to a surjective ring homomorphism $\theta_\Delta:\mBdrp\to\widehat{\C}_\Delta$, where $\widehat{\C}_\Delta$ is the profinite completion of $\C_\Delta$ with respect to the family $\{\mathfrak{P}_s\}_{s\in\mathfrak{S}}$ in sense of Definition \ref{Profinite completion}.
\end{proposition}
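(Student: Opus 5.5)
The plan is to build the extension componentwise, one finite level of the profinite completions at a time. By the Remark above, $\mBdrp$ is the profinite completion of $\pmBdrp$ with respect to the family $\{\ker\Phi_s\}_{s\in\mathfrak{S}}$. So for a finite subset $\Lambda'\subset\mathfrak{S}$ we set
$\mathfrak{a}_{\Lambda'}=\bigcap_{s\in\Lambda'}\ker\Phi_s$ in $\pmBdrp$ and $\mathfrak{b}_{\Lambda'}=\bigcap_{s\in\Lambda'}\mathfrak{P}_s$ in $\C_\Delta$. Then $\mBdrp=\varprojlim_{\Lambda'}\pmBdrp/\mathfrak{a}_{\Lambda'}$ and $\widehat{\C}_\Delta=\varprojlim_{\Lambda'}\C_\Delta/\mathfrak{b}_{\Lambda'}$. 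It therefore suffices to construct compatible surjections $\theta_{\Lambda'}:\pmBdrp/\mathfrak{a}_{\Lambda'}\to\C_\Delta/\mathfrak{b}_{\Lambda'}$ and pass to the limit.

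\textbf{Step 1: $\theta_\Delta$ descends to each finite level.} We must check $\theta_\Delta(\mathfrak{a}_{\Lambda'})\subset\mathfrak{b}_{\Lambda'}$. For $x\in\ker\Phi_s$, the commutative square $\beta_s\circ\theta_\Delta=\theta\circ\Phi_s$ gives $\beta_s(\theta_\Delta(x))=0$, so $\theta_\Delta(x)\in\mathfrak{P}_s$. Intersecting over $s\in\Lambda'$ gives the inclusion. Hence $\theta_\Delta$ induces maps $\theta_{\Lambda'}$, which are surjective because $\theta_\Delta:\pmBdrp\to\C_\Delta$ is surjective. Compatibility with the transition maps for $\Lambda'\subset\Lambda''$ is clear, since everything is induced from the single map $\theta_\Delta$. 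Taking the inverse limit yields a ring homomorphism $\theta_\Delta:\mBdrp\to\widehat{\C}_\Delta$. It extends the original map because $\pmBdrp\hookrightarrow\mBdrp$ is the diagonal map, and likewise $\C_\Delta\hookrightarrow\widehat{\C}_\Delta$ (injective since $\bigcap_s\mathfrak{P}_s=0$, as $\C_\Delta$ is reduced).

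\textbf{Step 2: surjectivity of the limit map.} This is the main obstacle, because an inverse limit of surjections need not be surjective. I would avoid a Mittag-Leffler argument on kernels and instead identify each finite level explicitly by the Chinese remainder theorem. The ideals $\mathfrak{P}_s$ are pairwise comaximal, so $\C_\Delta/\mathfrak{b}_{\Lambda'}\cong\prod_{s\in\Lambda'}\C$.

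On the other side, the ideals $\ker\Phi_s$ are not maximal, so their comaximality must be checked. I would instead use that $\mBdrp$ projects onto $\Bdrp$ for each $s$ (via $\Phi_s$), and that the maps to the finite products are compatible. Concretely, an element of $\widehat{\C}_\Delta$ is a compatible family, which amounts to an element $(c_s)\in\prod_{s\in\mathfrak{S}}\C$. We must lift it to a compatible family in $\prod_s\Bdrp$ lying in $\mBdrp$.

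A cleaner route is to prove directly that $\mBdrp\cong\varprojlim_{\Lambda'}\mathrm{im}\bigl(\pmBdrp\to\prod_{s\in\Lambda'}\Bdrp\bigr)$ maps onto $\varprojlim_{\Lambda'}\prod_{s\in\Lambda'}\C$. For this I would choose lifts coordinatewise: given $(c_s)_s$, for each $s$ pick $b_s\in\Bdrp$ with $\theta(b_s)=c_s$. The family $(b_s)$ is automatically compatible across finite subsets, since the transition maps are projections. It remains to see that for each finite $\Lambda'$ the tuple $(b_s)_{s\in\Lambda'}$ can be adjusted modulo $\xi$ to lie in the image of $\pmBdrp$.

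Surjectivity at finite level (Step 1) gives some $y\in\pmBdrp$ with $\theta(\Phi_s(y))=c_s$ for $s\in\Lambda'$. So one may simply take $b_s:=\Phi_s(y_{\Lambda'})$. The difficulty is choosing these $y_{\Lambda'}$ compatibly as $\Lambda'$ grows. To handle this, I would show that the kernel system $\bigl(\xi\,\pmBdrp+\mathfrak{a}_{\Lambda'}\bigr)/\mathfrak{a}_{\Lambda'}$ has surjective transition maps; this holds because $\xi$ generates each of these kernels and multiplication by $\xi$ commutes with the transitions. Mittag-Leffler then gives surjectivity of the limit, completing the proof.
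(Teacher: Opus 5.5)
Your Step 1 is the paper's proof. The paper derives $\theta_\Delta^{-1}(\mathfrak{P}_s)\supset\ker\Phi_s$ from $\beta_s\circ\theta_\Delta=\theta\circ\Phi_s$, obtains the maps at each finite level, and passes to the limit. It never argues that the limit map is surjective, so you are right that this is the real issue. Your Step 2, however, does not settle it, for two reasons.

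First, you assert that the kernels of $\theta_{\Lambda'}$ are $(\xi\pmBdrp+\mathfrak{a}_{\Lambda'})/\mathfrak{a}_{\Lambda'}$. The kernel is really $\bigl(\bigcap_{s\in\Lambda'}(\ker\Phi_s+\xi\pmBdrp)\bigr)/\mathfrak{a}_{\Lambda'}$. Moving the intersection past the sum is equivalent to $\theta_\Delta(\mathfrak{a}_{\Lambda'})=\mathfrak{b}_{\Lambda'}$, which is exactly the comaximality you flagged and then skipped.

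Second, Mittag-Leffler is not available here. It builds compatible preimages by induction along a countable cofinal sequence, and the directed set of finite subsets of the uncountable set $\mathfrak{S}$ has none. Over such index sets, surjective transition maps do not in general force $\varprojlim^1=0$. Already for sets, the restriction system of injections $F\hookrightarrow\mathbb{N}$, with $F$ finite in an uncountable set, has surjective transitions and empty limit. So the final step is unjustified as written.

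The missing idea is to prove comaximality; after that, no limit argument is needed. For $s\neq s'$, pick $c\in\mathfrak{P}_s$ and $c'\in\mathfrak{P}_{s'}$ with $c+c'=1$. Lift $c$ to some $y_0\in\pmBdrp$. Then $\theta(\Phi_s(y_0))=\beta_s(c)=0$, so $\Phi_s(y_0)=\xi_s w$ for some $w\in\Bdrp$. Choosing $z$ with $\Phi_s(z)=w$ (surjectivity of $\Phi_s$) gives $y=y_0-\xi z\in\ker\Phi_s$ with $\theta_\Delta(y)=c$. Similarly there is $y'\in\ker\Phi_{s'}$ with $\theta_\Delta(y')=c'$. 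Then $y+y'\in 1+\xi\pmBdrp$, which is a unit because $\xi$ lies in the Jacobson radical.

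By the Chinese remainder theorem and surjectivity of each $\Phi_s$, you get $\pmBdrp/\mathfrak{a}_{\Lambda'}\cong\prod_{s\in\Lambda'}\Bdrp$ and $\C_\Delta/\mathfrak{b}_{\Lambda'}\cong\prod_{s\in\Lambda'}\C$. The transition maps become projections, and the commutative square turns $\theta_{\Lambda'}$ into $\prod_{s\in\Lambda'}\theta$. Hence $\mBdrp\cong\prod_{s\in\mathfrak{S}}\Bdrp$ and $\widehat{\C}_\Delta\cong\prod_{s\in\mathfrak{S}}\C$, and the extended map is $\prod_s\theta$. This is surjective by choosing lifts coordinatewise, which is precisely your coordinatewise-lift idea. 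It is now legitimate because every tuple in $\prod_{s\in\Lambda'}\Bdrp$ comes from $\pmBdrp$, so no compatibility problem arises. The same identification shows that the kernel is $\prod_s\xi_s\Bdrp=\xi\,\mBdrp$, which the paper asserts without proof.
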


\begin{proof}
    We have the commutative diagram
    \[
    \begin{tikzcd}
        \pmBdrp\arrow[r,"\theta_\Delta"]\arrow[d,"\Phi_s"]& \C_\Delta\arrow[d,"\beta_s"]\\
        \Bdrp\arrow[r,"\theta"]  &\C
    \end{tikzcd}
    \]
    i.e., $\beta_s\circ\theta_\Delta=\theta\circ\Phi_s$. Then we have
    \begin{align*}
        \theta^{-1}_\Delta(\mathfrak{P}_s)=\ker(\beta_s\circ\theta_\Delta)=\ker(\theta\circ\Phi_s)=\Phi^{-1}_s(\ker\theta)\supset \ker\Phi_s.
    \end{align*}
    Then for any finite subset $\mathfrak{S}'$ of $\mathfrak{S}$, define a ring homomorphism
    \begin{align*}
        \pmBdrp/\bigcap\limits_{s\in\mathfrak{S}'}\ker\Phi_s&\to \C_\Delta/\bigcap\limits_{s\in\mathfrak{S}'}\mathfrak{P}_s,\\
        x+\bigcap\limits_{s\in\mathfrak{S}'}\ker\Phi_s&\mapsto \theta_\Delta(x)+\bigcap\limits_{s\in\mathfrak{S}'}\mathfrak{P}_s.
    \end{align*}
    We want to show that this map is well-defined. For this, suppose that $x+\bigcap\limits_{s\in\mathfrak{S}'}\ker\Phi_s=y+\bigcap\limits_{s\in\mathfrak{S}'}\ker\Phi_s$. Then we have
    \begin{align*}
        x-y\in\ker\Phi_s\text{ for all }s\in\mathfrak{S}'&\implies x-y\in\ker\Phi_s\text{ for all }s\in\mathfrak{S}'\\
        &\implies x-y\in\theta^{-1}_\Delta(\mathfrak{P}_s)\text{ for all }s\in\mathfrak{S}'\\
        &\implies \theta_\Delta(x)-\theta_\Delta(y)\in\mathfrak{P}_s\text{ for all }s\in\mathfrak{S'}.
    \end{align*}
    This shows that $\theta_\Delta(x)+\bigcap\limits_{s\in\mathfrak{S}'}\mathfrak{P}_s=\theta_\Delta(y)+\bigcap\limits_{s\in\mathfrak{S}'}\mathfrak{P}_s$. This shows that the above map is well-defined, and it is clear that this extends the ring map $\theta_\Delta:\pmBdrp\to\C_\Delta$, and taking the inverse limit we get the required ring homomorphism. Also note that $\ker\theta_\Delta$ is principal, which is generated by the system of images of $\xi$ onto each quotient, which we also denote by $\xi$. We denote this map also by $\theta_\Delta$.
\end{proof}

\begin{definition}
    For each $i\in\mathbb{Z}$, we define $\Fil^i_\Delta\mBdr=\xi^i\mBdrp$.
\end{definition}

\begin{remark}
    For every $i \in \mathbb{Z}$, we have the relation $\Fil^i \Bdr = \Phi_s(\Fil^i_\Delta \mBdr)$, where $\Fil^i \Bdr$ denotes the standard filtration on $\Bdr$.
\end{remark}
    
\begin{remark}
    The filtration forms a descending chain
    \begin{align*}
        \cdots \supseteq \Fil^{-1}_\Delta \mBdr \supseteq \mBdrp \supseteq \Fil^1_\Delta \mBdr \supseteq \cdots,
    \end{align*}
    which satisfies $\bigcap_{i \in \mathbb{Z}} \Fil^i_\Delta \mBdr = \{0\}$ and $\bigcup_{i \in \mathbb{Z}} \Fil^i_\Delta \mBdr = \mBdr$. This induces a corresponding filtration 
    \begin{align*}
        \cdots \supseteq \Fil^{-1} \Bdr \supseteq \Bdrp \supseteq \Fil^1 \Bdr \supseteq \cdots
    \end{align*}
    that is independent of $s \in \mathfrak{S}$. Indeed, each $\xi_s = \Phi_s(\xi)$ generates the standard $\ker \theta$, and $\xi_s$ differs from the corresponding $\xi_t$ for $t \in \mathfrak{S}$ only by a unit.
\end{remark}

\begin{remark}
    As in \cite{Fontaine}, there are two natural topologies on $\mBdr$:
    \begin{itemize}
        \item The topology defined by declaring the collection $\{\Fil^i_\Delta \mBdrp\}_{i \in \mathbb{Z}}$ as a fundamental system of neighborhoods of $0$.
        \item The \emph{canonical topology}, induced by the inverse limit, where each component $\mAinf[\frac{1}{p}] / (\ker \theta_\Delta)^n$ is equipped with the quotient topology from $\mAinf[\frac{1}{p}]$. The topology generated by this and the pro-finite completion \ref{Profinite completion} gives a topology on $\mBdr$.
    \end{itemize}

    The standard tensor product $\overline{K}_0 \otimes_{K_0} \cdots \otimes_{K_0} \overline{K}_0$ is a subring of $\mBdrp$, and the inclusion respects the action of $\G_{K,\Delta}$. Indeed, the ring homomorphism $\iota_\alpha: \C \to \C_\Delta$ induces a ring homomorphism $\Phi_\alpha: \Bdrp \to \mBdrp$ with a group action compatible with the projection map $\G_{K,\Delta} \to \G_{K_\alpha}$. This follows because we can choose $\varpi \in K_\alpha$ such that $\varpi^\# = -p$; thus, the map preserves the filtration in the sense that $\Phi_\alpha(\Fil^n \Bdr) \subseteq \Fil^n_\Delta \mBdr$. \\

    We claim that $\Phi_\alpha$ is injective. Since this map is induced at the level of $\Ainf$, we first restrict our attention to $\Phi_\alpha: \Ainf \to \mAinf$. Let $x = \sum\limits_{n=0}^\infty [x_{\alpha,n}] p^n \in \Ainf$ be such that $\Phi_\alpha(x) = 0$. Then $\varpi_\alpha$ divides $x_{\alpha,0}$, which implies that $\varpi_\alpha \in \C^\flat_\Delta$ divides $\iota^\flat_\alpha(x_\alpha)$, and consequently $x = px_1$. Iterating this argument, we obtain $x = 0$. \\

    Since $\Phi_s$ respects $\ker \theta$, we obtain an injective ring homomorphism $\Bdrp \to \mBdrp$. If $K_0 = \Frac W(k)$, then $W(k)$ remains stable throughout the construction, and $\Phi_\alpha$ is a $K_0$-algebra homomorphism. By Lemma 6.17 of \cite{Fontaine} and the universal property of tensor products, we have $\overline{K}_1 \otimes_{K_0} \cdots \otimes_{K_0} \overline{K}_t \subset \mBdrp$ and $(\overline{K}_1 \otimes_{K_0} \cdots \otimes_{K_0} \overline{K}_t) \cap \Fil^1_\Delta \mBdr = \{0\}$ (using the fact that $\bigcap_{s \in \mathfrak{S}} \ker \Phi_s = \{0\}$). For an arbitrary $K_0$, we consider the maximal unramified extension of $\mathbb{Q}_p$ inside $K_0$ and proceed similarly using ramified Witt vectors.
\end{remark}

Consider $\epsilon=(1,\zeta_p,\zeta_{p^2},\cdots)\in\mathcal{O}^\flat_{\C,\Delta}$ made from the compatible $p^n-$th roots of unity and define $\pi=[\epsilon]-1\in\mAinf$. Then $\theta_\Delta(\pi)=\epsilon^\#-1=0$, that is $\pi\in\ker\theta_\Delta=\Fil^1_\Delta\mBdr$. Thus $(-1)^{n+1}\frac{([\epsilon]-1)^n}{n}\in\pmBdrp\subset\mBdrp$.

\begin{definition}
    We define the element $t_\Delta$ as
    \begin{align*}
        t_\Delta:=\log[\epsilon]=\sum\limits_{n=1}^\infty(-1)^{n+1}\frac{([\epsilon]-1)^n}{n}\in\mBdrp.
    \end{align*}
\end{definition}

\begin{lemma}
    The element $t_\Delta\in\Fil^1_\Delta(\mBdr)\setminus\Fil^2_\Delta(\mBdrp)$.
\end{lemma}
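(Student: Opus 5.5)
Two things have to be shown: $t_\Delta\in\Fil^1_\Delta\mBdr$, and $t_\Delta\notin\Fil^2_\Delta\mBdr=\xi^2\mBdrp$. The plan is to prove the first directly in $\mBdrp$ and to deduce the second from the classical theory via the maps $\Phi_s$.

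\emph{Membership in $\Fil^1_\Delta$.} Since $\theta_\Delta(\pi)=\epsilon^\#-1=0$, we have $\pi=[\epsilon]-1\in\ker\theta_\Delta=\xi\mBdrp$. Hence $\pi^n\in\xi^n\mBdrp$, and each term $(-1)^{n+1}\pi^n/n$ lies in $\Fil^n_\Delta$. Two points need care.
\begin{itemize}
\item Convergence. The partial sums $\sum_{n\le N}(-1)^{n+1}\pi^n/n$ stabilise modulo $\xi^m$ for every $m$, because the terms with $n\ge m$ vanish in $\mAinf[\frac1p]/(\xi^m)$. So the series converges in $\pmBdrp=\varprojlim\mAinf[\frac1p]/(\xi^n)$, and hence in $\mBdrp$.
\item Closedness. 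Every term lies in $\xi\mBdrp$, so we need $\xi\mBdrp$ to be closed. This holds because it is the kernel of the continuous map $\theta_\Delta$. Alternatively, write $t_\Delta=\pi\cdot\sum_{n\ge1}(-1)^{n+1}\pi^{n-1}/n$, where the second factor converges by the same argument and lies in $\mBdrp$.
\end{itemize}
Either way, $t_\Delta\in\xi\mBdrp=\Fil^1_\Delta\mBdr$.

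\emph{Non-membership in $\Fil^2_\Delta$.} Suppose, for contradiction, that $t_\Delta=\xi^2y$ with $y\in\mBdrp$. Fix any $s\in\mathfrak{S}$ and apply $\Phi_s$, extended to $\mBdrp\to\Bdrp$; it factors through the projection of the profinite completion, since $\mBdrp$ is built from the quotients by $\ker\Phi_s$. Because $\Phi_s$ is a continuous ring homomorphism induced by $[x]\mapsto[\beta^\flat_s(x)]$, it sends $[\epsilon]$ to $[\epsilon_s]$, where $\epsilon_s=\beta_s^\flat(\epsilon)$ is a compatible system of $p$-power roots of unity in $\C^\flat$ with $\epsilon_s^{(1)}=\beta_s(\zeta_p)\neq1$ a primitive $p$-th root of unity. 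Therefore $\Phi_s(t_\Delta)=\log[\epsilon_s]=t$ is Fontaine's classical element. We also have $\Phi_s(\xi)=\xi_s$, a generator of $\ker\theta$, so we would get $t\in\xi_s^2\Bdrp=\Fil^2\Bdr$. This contradicts the classical fact (\cite{Fontaine}) that $t$ is a uniformiser of $\Bdrp$, i.e.\ $t\in\Fil^1\Bdr\setminus\Fil^2\Bdr$.

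\emph{Main obstacle.} The delicate step is justifying that $\Phi_s$ commutes with the infinite series defining $t_\Delta$, i.e.\ that $\Phi_s$ is continuous for the $\xi$-adic, inverse-limit topology on $\mBdrp$. I would handle this level by level. Since $\Phi_s(\xi)=\xi_s$, the map $\Phi_s$ induces compatible maps $\mAinf[\frac1p]/(\xi^m)\to\Ainf[\frac1p]/(\xi_s^m)$. On each of these levels the series is a finite sum, so equality of images holds there, and passing to the limit gives $\Phi_s(t_\Delta)=t$.

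A further check is that $\beta^\flat_s(\epsilon)$ is genuinely nontrivial, so that the classical $t$ is nonzero. This holds because $\beta_s$ is a ring map to $\C$ sending the chosen primitive $p^n$-th roots of unity to primitive ones.
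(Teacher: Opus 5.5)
Your proposal is correct and follows the paper's route. The paper simply deduces the lemma from Fontaine's Proposition 6.22 by pushing along $\Phi_s$, which respects the filtrations; you fill in the membership, convergence and level-wise continuity details it omits.

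One thing to tighten: $\beta_s$ is not injective, so it need not send primitive roots of unity to primitive ones. Whether $\beta^\flat_s(\epsilon)\neq 1$ holds for every $s$ therefore depends on how the paper's loosely specified $\zeta_{p^n}$ sit in $\C_\Delta$. You do not need every $s$: non-membership needs only one, e.g.\ $s=(\id,\dots,\id)$, for which $\Phi_s(t_\Delta)$ is a nonzero $\mathbb{Z}_p$-multiple of the classical $t$.
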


\begin{proof}
    The proof is clear from Proposition 6.22 of \cite{Fontaine}, since $\Phi_s$ respects filtrations.
\end{proof}

Note that $\G_{K,\Delta}$ acts continuously on $\mBdrp$ and the natural map $\Phi_s:\mBdr\to\Bdr$ is compatible with the map $\eta_s:\G_{K_s}\to\G_{K,\Delta}$, which we will use in \S\ref{Admissible Representations}.\\

In the next section, we will define the Hodge-Tate period ring, which is related to $\mBdrp$ using this element $t_\Delta$.


\section{Hodge-Tate Period Rings}

To define the Hodge-Tate period ring, we first need to define the analogue of the Tate twist. Specifically, we wish to define the $p$-adic cyclotomic character in this setup. For this, we have an analogue of the cyclotomic tower in the multivariate case. This is represented as follows: 

{\small
\[
\begin{tikzcd}
    \vdots    &  \vdots & \vdots & \vdots & \vdots\\
    \mathbb{Q}_p(\mu_{p^2})\arrow[u,dash]  & \mathbb{Q}_p(\mu_{p^2})\otimes_{\mathbb{Q}_p}\mathbb{Q}_p(\mu_{p})\arrow[u,dash] & \mathbb{Q}_p(\mu_{p^2})\otimes_{\mathbb{Q}_p}\mathbb{Q}_p(\mu_{p^2})\arrow[u,dash] & \mathbb{Q}_p(\mu_{p})\otimes_{\mathbb{Q}_p}\mathbb{Q}_p(\mu_{p^2})\arrow[u,dash] &\cdots\\
    \mathbb{Q}_p(\mu_{p})\arrow[u,dash]  & \mathbb{Q}_p(\mu_p)\otimes_{\mathbb{Q}_p}\mathbb{Q}_p \arrow[u,dash] & \mathbb{Q}_p(\mu_{p})\otimes_{\mathbb{Q}_p}\mathbb{Q}_p(\mu_{p})\arrow[ul,dash]\arrow[ur,dash]\arrow[u,dash]& \mathbb{Q}_p\otimes_{\mathbb{Q}_p}\mathbb{Q}_p(\mu_p) \arrow[u,dash] & \cdots\\
    \mathbb{Q}_p\arrow[u,dash] & & \mathbb{Q}_p\arrow[ul,dash]\arrow[u,dash]\arrow[ur,dash]\\
    \Delta=\{1\}  &  & \Delta=\{1,2\} & & \Delta
\end{tikzcd}
\]
}

Suppose $\mathbb{N}^t$ is equipped with the usual partial ordering; that is, for $\bn=(n_1,\dots,n_t)$ and $\bn'=(n'_1,\dots,n'_t)$ in $\mathbb{N}^t$, we define $\bn\leq \bn'$ if and only if $n_\alpha\leq n'_\alpha$ for every $\alpha \in \Delta=\{1,\dots,t\}$. We define a subgroup
\begin{align*}
    \mu_{\bp^{\bn}} := \left\{ \boldsymbol\zeta = \zeta_1 \otimes \dots \otimes \zeta_t \in \C^\times_\Delta \;\middle|\; \zeta_1^{p^{n_1}} = \dots = \zeta_t^{p^{n_t}} = 1 \text{ for } \bn=(n_1,\dots,n_t) \right\}.
\end{align*}
Note that $\G_{\mathbb{Q}_p,\Delta}$ acts naturally on $\mu_{\bp^{\bn}}$. Here, for $\bn=(n_1,\dots,n_t)$, we write $\bp^{\bn}=(p^{n_1},\dots,p^{n_t})$ and $\boldsymbol{\zeta}^{\bn}=\zeta_1^{n_1}\otimes\dots\otimes\zeta_t^{n_t}$. We call an element $\boldsymbol\zeta \in \mu_{\bp^{\bn}}$ \emph{primitive} if for every $\bn' \leq \bn$ in $\mathbb{N}^t$ such that $\bn' \neq \bn$, we have $\boldsymbol\zeta^{\bp^{\bn'}} \neq 1$. Let $\boldsymbol\zeta_{\bp^\bn}$ be a primitive element in $\mu_{\bp^{\bn}}$; then we have a canonical group isomorphism:
\begin{align*}
    (\mathbb{Z}/p^{n_1}\mathbb{Z}) \times \dots \times (\mathbb{Z}/{p^{n_t}\mathbb{Z}}) &\to \mu_{\bp^{\bn}} \\
    \bn' = (n'_1, \dots, n'_t) &\mapsto \boldsymbol{\zeta}^{\bn'}_{\bp^{\bn}}.
\end{align*}

Since $\G_{\mathbb{Q}_p,\Delta}$ acts simply transitively on the set of primitive roots in $\mu_{\bp^{\bn}}$, for every $\boldsymbol\sigma=(\sigma_1,\dots,\sigma_t) \in \G_{\mathbb{Q}_p,\Delta}$, there exists a unique $\ba(\boldsymbol\sigma,\bn) \leq \bn$ such that $\boldsymbol\sigma \cdot \boldsymbol\zeta_{\bp^\bn} = \boldsymbol\zeta^{\ba(\boldsymbol\sigma,\bn)}_{\bp^{\bn}}$. Note that the subset of primitive elements is isomorphic to $(\mathbb{Z}/p^{n_1}\mathbb{Z})^{\times} \times \dots \times (\mathbb{Z}/p^{n_t}\mathbb{Z})^\times$. Let $\boldsymbol{1}_\alpha$ denote the vector with $1$ in the $\alpha$-th entry and $0$ elsewhere, and let $\bp_\alpha = p \cdot \boldsymbol{1}_\alpha$. Then we have:
\begin{align*}
    \boldsymbol\zeta^{\bp_\alpha}_{\bp^{\bn+\boldsymbol{1}_\alpha}} = \boldsymbol\zeta_{\bp^\bn}
\end{align*}
Applying $\boldsymbol{\sigma}$ to both sides, we see that $\ba(\boldsymbol\sigma,\bn+\boldsymbol{1}_\alpha) \equiv \ba(\boldsymbol{\sigma},\bn) \pmod{\bp_\alpha}$. Consequently, taking the limit over $\bn$, we obtain a homomorphism:
\begin{align*}
   \boldsymbol{\chi}: \G_{\mathbb{Q}_p,\Delta} &\to \GL_t(\mathbb{Z}_p) \\
   \boldsymbol{\sigma} &\mapsto \diag(\ba(\boldsymbol\sigma,\bn))
\end{align*}
where for $\textbf{m}=(m_1,\dots,m_t)$, the symbol $\diag(\textbf{m})$ denotes the diagonal matrix:
\begin{align*}
    \begin{bmatrix}
      m_1 & 0   & \cdots & 0 \\
      0   & m_2 & \cdots & 0 \\
      \vdots & \vdots &\ddots & \vdots \\
      0 & 0 & \cdots & m_t
    \end{bmatrix}.
\end{align*}
This continuous representation restricts to the closed subgroup $\G_{K,\Delta}$, yielding a continuous representation $\G_{K,\Delta} \to \GL_t(\mathbb{Z}_p)$. Composing this with the homomorphism $\diag(\bm)\mapsto m_\alpha$ with $\alpha\in\Delta$, we get characters
\begin{align*}
    \chi_\alpha:\G_{K,\Delta}\to\mathbb{Z}^\times_p
\end{align*}

\begin{definition}
    We define the \emph{multivariate cyclotomic character} as $\chi_\Delta:=\det\circ\chi$.
\end{definition}

\begin{remark}
    For the group homomorphism $\eta_s:\G_{K_s}\to\G_{K,\Delta}$, the composition $\chi_\Delta\circ\eta_s$ is the usual $p$-adic cyclotomic character of $\G_{K_s}$.
\end{remark}

\begin{definition}
    Let $i\in\mathbb{Z}$; we define the \emph{Tate twist} $\mathbb{Z}_p(i)$ to be a free $\mathbb{Z}_p$-module
    \begin{align*}
      \mathbb{Z}_p(i)=\mathbb{Z}_pu^i
    \end{align*}
    with an action of $\G_{K,\Delta}$ by the formula
    \begin{align*}
        g\cdot u^i:=\chi^i_\Delta(g)u^i
    \end{align*}
    for all $g\in\G_{K,\Delta}$. For a $\mathbb{Z}_p$-module $M$, we define
    \begin{align*}
        M(i):=M\otimes_{\mathbb{Z}_p}\mathbb{Z}_p(i).
    \end{align*}
\end{definition}

Finally, we can define the Hodge-Tate period ring in the multivariate setup as follows: Let $\bi=(i_1,\cdots,i_t)\in\mathbb{N}^t$ and let $i=|\bi|=i_1+\cdots+i_t$. Then we have the following definition:

\begin{definition}\label{Pre-Hodge Tate Period Rings}
    The \emph{multivariate pre-Hodge-Tate period ring} $\mBHT$ is defined as
    \begin{align*}
        \pmBHT:=\bigoplus\limits_{i\in\mathbb{Z}}\C_\Delta(i)=\C_\Delta\left[t_\Delta,\frac{1}{t_\Delta}\right]
    \end{align*}
    The ring structure on $\mBHT$ is given by $cu^ic'u^j=cc'u^{i+j}$, and hence it is a graded ring.
\end{definition}

Note that $\mBHT$ is a subring $\C_\Delta(\!(t_\Delta)\!)$. Let us first recall some facts about the power series ring $\C_\Delta[\![t_\Delta]\!]$ over $\C_\Delta$ given in \cite{Powerseriesdim}. A ring $R$ is called \emph{strong finite type (SFT)} if for every ideal $\mathfrak{a}$ of $R[\![T]\!]$, there exists a finitely generated ideal $\mathfrak{b}\subset\mathfrak{a}$ and $k\in\mathbb{N}^*$ such that $a^k\in\mathfrak{b}$ for each $a\in\mathfrak{a}$. Note that $\C_\Delta$ is SFT if and only if $t=|\Delta|=1$. Then we have the following classical theorem of Arnold:

\begin{proposition}[\cite{Powerseriesdim},Theorem 1, Theorem 2]\label{Power Series dim}
    The power series ring $\C_\Delta[\![t_\Delta]\!]$ has the following characterisation of Krull dimension:
    \begin{align*}
        \dim\C_\Delta[\![t_\Delta]\!]=
        \begin{cases}
            1 & \text{ if } t=1\\
            \infty & \text{ if }t>1
        \end{cases}.
    \end{align*}
\end{proposition}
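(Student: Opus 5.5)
We split according to $t=|\Delta|$ and derive both cases from Arnold's theorems: Theorem 1 covers SFT rings and Theorem 2 covers non-SFT rings. The statement is cited from \cite{Powerseriesdim}, so the work consists of checking the hypotheses for $\C_\Delta$.

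\emph{Case $t=1$.} Here $\C_\Delta=\C$ is a field. A field is trivially SFT, since every ideal of $\C[\![T]\!]$ is finitely generated. Arnold's theorem for SFT rings gives $\dim R[\![T]\!]=\dim R+1$ whenever $R$ is SFT of finite dimension. Applying this with $\dim\C=0$ yields $\dim\C[\![t]\!]=1$. Alternatively, $\C[\![t]\!]$ is a discrete valuation ring, and the result follows directly.

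\emph{Case $t>1$.} We first show that $\C_\Delta$ is not SFT. By \cite{Pok-Pal}, Proposition 2.4, every prime ideal of $\C_\Delta$ is maximal and has the form $\mathfrak{P}_s$, and $\C_\Delta$ is VNR. In a VNR ring every finitely generated ideal is generated by a single idempotent. So if $\C_\Delta$ were SFT, then for each prime $\mathfrak{P}_s$ the SFT condition would give an idempotent $e$ with $\mathfrak{P}_s=\sqrt{(e)}=(e)$, and every prime would be finitely generated. By Cohen's theorem, $\C_\Delta$ would then be Noetherian. A Noetherian VNR ring is a finite product of fields, so $\spec\C_\Delta$ would be finite.

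When $t>1$, however, $\mathfrak{S}$ is infinite. For instance, fix $s_1=\id$ and let $s_2$ range over the infinitely many continuous automorphisms of $\C$ over $K_0$; these give inequivalent families. Hence $\spec\C_\Delta$ is infinite, and $\C_\Delta$ is not SFT.

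We then invoke Arnold's Theorem 2: if $R$ is not SFT, then $\dim R[\![T]\!]=\infty$. Applying this to $R=\C_\Delta$ gives $\dim\C_\Delta[\![t_\Delta]\!]=\infty$.

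If we wanted to avoid the black box, we could construct the infinite chain by hand. Choose a strictly increasing sequence of idempotents $e_1,e_2,\dots$, and let $I_k$ be the corresponding ideals, which are not finitely generated in the union. Arnold-type arguments then give, for each $n$, a chain of $n$ primes in $R[\![T]\!]$ lying over such ideals. Those primes are built from the contraction data of the ideal $\bigcup_k e_kR[\![T]\!]$.

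The main obstacle is the non-SFT claim: one must verify carefully that infinitely many inequivalent embedding families exist, and that the passage from SFT to Noetherian via idempotent-generated radicals is valid in the VNR setting. I would settle the first point by noting that $\operatorname{Aut}_{\mathrm{cont}}(\C/K_0)\supseteq\G_{K_0}$ is infinite, and that distinct cosets modulo $\G_{K_2}$ in the second coordinate remain inequivalent once the first coordinate is fixed to the identity.
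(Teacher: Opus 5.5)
Your proof is correct, but you establish the key step, that $\C_\Delta$ is not SFT for $t>1$, differently from the paper.

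\textbf{The paper's route.} In the proof of Proposition~\ref{Hodge-Tate Period rings are infinite dimensional}, the paper fixes one $s$ and simply asserts that $\mathfrak{P}_s$ is not finitely generated. It then shows by an explicit idempotent computation that an SFT approximation $\mathfrak{J}=e\C_\Delta\subseteq\mathfrak{P}_s$ would force $\mathfrak{P}_s=\mathfrak{J}$.

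\textbf{Your route.} You use the same fact that idempotent-generated ideals are radical. But you never assert non-finite-generation; you get the contradiction globally instead:
\begin{enumerate}
\item[(1)] SFT would make every prime $\mathfrak{P}_s=(e)$ finitely generated.
\item[(2)] Cohen's theorem then makes $\C_\Delta$ Noetherian.
\item[(3)] A Noetherian VNR ring is a finite product of fields, so $\spec\C_\Delta$ would be finite.
\item[(4)] This contradicts $|\mathfrak{S}|=\infty$. You check this directly: after normalising $s_1=\id$, the only admissible $u$ is $\id$, so distinct $s_2$ give distinct classes.
\end{enumerate}

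\textbf{Comparison.} Your argument needs only the cardinality of $\spec\C_\Delta$, and it fills the gap the paper leaves. It produces only \emph{some} non-finitely-generated prime, but that is all the failure of SFT requires. The paper's stronger claim, that \emph{every} $\mathfrak{P}_s$ is not finitely generated, amounts to saying $\spec\C_\Delta$ has no isolated points. That would need the topology on $\mathfrak{S}$, or a homogeneity argument, which the paper does not supply.

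\textbf{Corrections.}
\begin{enumerate}
\item[(a)] For $t=1$, the formula $\dim R[\![T]\!]=\dim R+1$ is not a theorem for arbitrary finite-dimensional SFT rings. Coykendall showed that the SFT property does not even force $\dim R[\![T]\!]<\infty$. Keep your fallback that $\C[\![t]\!]$ is a DVR.
\item[(b)] Your closing remark about ``distinct cosets modulo $\G_{K_2}$'' is off target. When $K_2/K_0$ is finite there are only finitely many such cosets. The infinitude has to come, as in your main text, from distinct $s_2\in\G_{K_0}$ giving inequivalent families.
\item[(c)] The sketched hand-built chain of primes is not an argument and can be dropped.
\end{enumerate}
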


For every $s\in\mathfrak{S}$, we have a ring homomorphism
\begin{align*}
    \C_\Delta[\![t_\Delta]\!]&\to \C[\![t]\!]\\
     \sum\limits_{n=0}^\infty a_nt^n_\Delta&\mapsto \sum\limits_{n=0}^\infty \beta_s(a_n)t^n
\end{align*}
Its kernel is a prime ideal given by $\mathfrak{P}_s[\![t_\Delta]\!]=\left\{\sum\limits_{n=0}^\infty a_nt^n_\Delta\in\C_\Delta[\![t_\Delta]\!]\;\middle|\; a_n\in\mathfrak{P}_s\right\}$. The proof of \ref{Power Series dim} produces an infinite chain of prime ideal $\mathfrak{P}_s[\![t_\Delta]\!]^{(n)}$ such that
\begin{align*}
    \cdots\subset(\mathfrak{P}_s[\![t_\Delta]\!])^{(2)}\subset(\mathfrak{P}_s[\![t_\Delta]\!])^{(1)}\subset\mathfrak{P}_s[\![t_\Delta]\!]\subset \mathfrak{P}_s+t_\Delta\C_\Delta[\![t_\Delta]\!]
\end{align*}

Localising with respect to the multiplicative set $\{t_\Delta,t^2_\Delta,\cdots\}$, we obtain a chain of prime ideals 
\begin{align*}
    \cdots\subset (\mathfrak{P}_s(\!(t_\Delta)\!))^{(2)}\subset (\mathfrak{P}_s(\!(t_\Delta)\!))^{(1)}\subset \mathfrak{P}_s(\!(t_\Delta)\!)
\end{align*}

Therefore, we obtain

\begin{proposition}\label{Hodge-Tate Period rings are infinite dimensional}
    Either of the following two is true for the ring $\C_\Delta(\!(t_\Delta)\!)$:
    \begin{enumerate}
        \item[(i)] If $|\Delta|=1$, the the ring $\C_\Delta(\!(t_\Delta)\!)$ is a field;
        \item[(ii)] If $|\Delta|>1$, then the ring $\C_\Delta(\!(t_\Delta)\!)$ has infinite Krull dimension and has maximal ideal $\mathfrak{P}_s(\!(t_\Delta)\!),\;s\in\mathfrak{S}$ which is the kernel of $\C_\Delta(\!(t_\Delta)\!)\to \C(\!(t)\!)$.
    \end{enumerate}
\end{proposition}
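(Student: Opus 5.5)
The plan is to treat the two cases separately and to reduce everything to the power series ring $\C_\Delta[\![t_\Delta]\!]$, using that $\C_\Delta(\!(t_\Delta)\!)$ is its localisation at the multiplicative set $\{t^n_\Delta\}_{n\geq 0}$.

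For (i), when $|\Delta|=1$ we have $\C_\Delta=\C$, which is a field. Then $\C[\![t]\!]$ is a discrete valuation ring with uniformiser $t$, so every nonzero Laurent series is $t^n u$ with $u$ a unit. Hence inverting $t$ gives the fraction field, and $\C(\!(t)\!)$ is a field. This case is routine.

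For (ii), I would split the argument into a dimension statement and a maximality statement.

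First, the dimension. Primes of the localisation correspond to primes of $\C_\Delta[\![t_\Delta]\!]$ not containing $t_\Delta$. By Proposition \ref{Power Series dim} and the remarks after it, Arnold's construction gives an infinite descending chain
\begin{align*}
\cdots\subset(\mathfrak{P}_s[\![t_\Delta]\!])^{(2)}\subset(\mathfrak{P}_s[\![t_\Delta]\!])^{(1)}\subset\mathfrak{P}_s[\![t_\Delta]\!].
\end{align*}
I must check that none of these primes contains $t_\Delta$. For $\mathfrak{P}_s[\![t_\Delta]\!]$ this is clear, since its elements have all coefficients in $\mathfrak{P}_s$ while $t_\Delta$ has coefficient $1\notin\mathfrak{P}_s$. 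The smaller primes are contained in it, so the same holds for them. Localising therefore preserves strict inclusions, and $\C_\Delta(\!(t_\Delta)\!)$ has infinite Krull dimension.

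Second, the maximality of $\mathfrak{P}_s(\!(t_\Delta)\!)$. I would identify it as the kernel of the surjection $\C_\Delta(\!(t_\Delta)\!)\to\C(\!(t)\!)$ induced coefficientwise by $\beta_s$. To see surjectivity, lift each coefficient through the surjective map $\beta_s$. One must confirm that an arbitrary sequence of lifts gives a legitimate formal series, and this is fine because formal series carry no convergence condition. The kernel consists exactly of the Laurent series with all coefficients in $\mathfrak{P}_s$, which is the localisation of $\mathfrak{P}_s[\![t_\Delta]\!]$. Since the target $\C(\!(t)\!)$ is a field by (i), this kernel is maximal.

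The step I expect to be the main obstacle is the dimension claim: verifying that Arnold's chain is really strictly descending inside $\mathfrak{P}_s[\![t_\Delta]\!]$ and avoids $t_\Delta$. If the chain as cited instead sits above $\mathfrak{P}_s[\![t_\Delta]\!]$, I would fall back on Arnold's general non-SFT argument. That argument uses that $\mathfrak{P}_s$ is not SFT when $t>1$, which holds because it is not finitely generated up to radical, as $\C_\Delta$ is a non-noetherian VNR ring. From it I would build primes $\mathfrak{Q}_n$ lying over $\mathfrak{P}_s$ that are not containing $t_\Delta$, and such primes are automatically disjoint from $\{t^n_\Delta\}$.
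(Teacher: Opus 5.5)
Your argument is correct in outline and has the same skeleton as the paper's: Arnold's theorem for $\C_\Delta[\![t_\Delta]\!]$, followed by localisation at the powers of $t_\Delta$. The work, however, is distributed differently.

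The paper's proof consists entirely of the step you relegate to a fallback, namely checking that $\C_\Delta$ is not SFT. That is the hypothesis under which Proposition \ref{Power Series dim} and the chain $(\mathfrak{P}_s[\![t_\Delta]\!])^{(n)}$ are available at all. The paper argues as follows: in a VNR ring a finitely generated $\mathfrak{J}\subseteq\mathfrak{P}_s$ is generated by an idempotent, so $a^k\in\mathfrak{J}$ for all $a\in\mathfrak{P}_s$ forces $\mathfrak{P}_s=\mathfrak{J}$, contradicting that $\mathfrak{P}_s$ is not finitely generated. Citing Proposition \ref{Power Series dim} for $|\Delta|>1$ therefore uses exactly what this proof is meant to supply, so that verification belongs in your main line. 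In the other direction, you make explicit two things the paper leaves implicit: that the chain avoids $t_\Delta$, and that $\mathfrak{P}_s(\!(t_\Delta)\!)$ is maximal because it is the kernel of a surjection onto the field $\C(\!(t)\!)$.

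Your fallback needs two corrections.

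\emph{First correction.} In a VNR ring every ideal is radical, so SFT ideals are finitely generated. Hence non-noetherianity of $\C_\Delta$ shows only that \emph{some} ideal fails to be SFT. It does not show that a given $\mathfrak{P}_s$ is not finitely generated: a single maximal ideal of a non-noetherian VNR ring can be generated by an idempotent, as with $0\times\prod_{\mathbb{N}}\C$ in $\C\times\prod_{\mathbb{N}}\C$. So for a chain under each $\mathfrak{P}_s[\![t_\Delta]\!]$ you need the paper's assertion about $\mathfrak{P}_s$ itself.

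\emph{Second correction.} For the dimension claim, ring-level non-SFT suffices, and you need not locate Arnold's chain at all. The primes of $\C_\Delta[\![t_\Delta]\!]$ containing $t_\Delta$ are exactly the maximal ideals $\mathfrak{P}_s+t_\Delta\C_\Delta[\![t_\Delta]\!]$. So only the top member of a chain can contain $t_\Delta$, and arbitrarily long chains survive localisation.

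Finally, your worry that the chain might sit above $\mathfrak{P}_s[\![t_\Delta]\!]$ is moot. Since $\C_\Delta[\![t_\Delta]\!]/\mathfrak{P}_s[\![t_\Delta]\!]\cong\C[\![t]\!]$, the only prime strictly above it is $\mathfrak{P}_s+t_\Delta\C_\Delta[\![t_\Delta]\!]$.
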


\begin{proof}
    We only need to show $\C_\Delta$ is not SFT. For every $s\in\mathfrak{S}$, the ideal $\mathfrak{P}_s$ is not finitely generated. Suppose there exists a finitely generated ideal $\mathfrak{J}\subseteq\mathfrak{P}_s$ and a $k\in\mathbb{N}^*$ such that $a^k\in\mathfrak{J}$ for every $a\in\mathfrak{P}_s$. Since $\C_\Delta$ is a VNR ring, the ideal $\mathfrak{J}$ is generated by idempotent and is a direct summand. This means $\C_\Delta=e\C_\Delta\oplus(1-e)\C_\Delta$. Then we can write
    \begin{align*}
        a=ae+a(1-e)
    \end{align*}
    Note that $y=a(1-e)\in\mathfrak{P}_s$ and hence we have $y^k\in\mathfrak{J}=e\C_\Delta$. Then we get
    \begin{align*}
        ye=a(1-e)e=ae-ae=0\implies y^ke=0.
    \end{align*}
    But $y^k\in e$ and write $y^k=er$ for $r\in\C_\Delta$, we get
    \begin{align*}
        y^ke=0\implies e^2r=0\implies er=0\implies y^k=0\implies y=0
    \end{align*}
    since $\C_\Delta$ is reduced. Therefore, we have $a=ae+y=ae\in\mathfrak{J}$. Since $a$ is an arbitrary element of $\mathfrak{P}_s$, we get $\mathfrak{P}_s\subseteq\mathfrak{J}$ and hence $\mathfrak{P}_s=\mathfrak{J}$, a contradiction.
\end{proof}

Since $\pmBHT$ is a subring of $\C_\Delta(t_\Delta)$ and whose completion with respect to $t_\Delta$ is $\C_\Delta(\!(t_\Delta)\!)$, therefore we obtain

\begin{proposition}
    Either of the following two conditions is satisfied for the ring $\pmBHT$:
    \begin{enumerate}
        \item[(i)] If $|\Delta|=1$, then $\dim\pmBHT$ is 1.
        \item[(ii)] If $|\Delta|>1$, then $\dim\pmBHT=\infty$.
    \end{enumerate}
\end{proposition}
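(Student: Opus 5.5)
The plan is to treat the two cases separately, using the description $\pmBHT=\C_\Delta[t_\Delta,t_\Delta^{-1}]\subset\C_\Delta(\!(t_\Delta)\!)$ from Definition~\ref{Pre-Hodge Tate Period Rings}, together with Proposition~\ref{Hodge-Tate Period rings are infinite dimensional}.

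\emph{Case $|\Delta|=1$.} Here $\C_\Delta=\C$, so $\pmBHT=\C[t,t^{-1}]$. This is a localisation of the principal ideal domain $\C[t]$, hence itself a principal ideal domain. It is not a field, because $t-1$ is a non-unit. Its nonzero primes are therefore exactly the maximal ideals $(t-a)$ with $a\in\C^\times$, so every maximal chain is $0\subset(t-a)$ and $\dim\pmBHT=1$. This case is routine.

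\emph{Case $|\Delta|>1$, setting up the chain.} I would fix $s\in\mathfrak{S}$ and start from the chain produced in Proposition~\ref{Hodge-Tate Period rings are infinite dimensional}:
\begin{align*}
  \cdots\subset(\mathfrak{P}_s(\!(t_\Delta)\!))^{(2)}\subset(\mathfrak{P}_s(\!(t_\Delta)\!))^{(1)}\subset\mathfrak{P}_s(\!(t_\Delta)\!).
\end{align*}
I then contract it along the inclusion $\iota:\pmBHT\hookrightarrow\C_\Delta(\!(t_\Delta)\!)$. Contractions of primes are prime, so this gives a descending chain of primes $\mathfrak{q}_n:=\iota^{-1}\big((\mathfrak{P}_s(\!(t_\Delta)\!))^{(n)}\big)$ in $\pmBHT$. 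The claim $\dim\pmBHT=\infty$ then follows once the contracted chain is shown to be strict at infinitely many steps.

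\emph{The main obstacle: strictness.} For each $n$ I need a Laurent polynomial lying in $\mathfrak{q}_n\setminus\mathfrak{q}_{n+1}$. In Arnold's proof (\cite{Powerseriesdim}), the primes $(\mathfrak{P}_s[\![t_\Delta]\!])^{(n)}$ are separated by power series $\sum a_kt_\Delta^k$ whose coefficients $a_k\in\mathfrak{P}_s$ have "orders" growing with $k$. That growth is possible exactly because $\mathfrak{P}_s$ is not SFT, which is the computation already carried out in Proposition~\ref{Hodge-Tate Period rings are infinite dimensional}.

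My first attempt would be to truncate such a separating series to a polynomial. I would use the idempotent decomposition $\C_\Delta=e\C_\Delta\oplus(1-e)\C_\Delta$ to choose the truncation so that it still witnesses the separation. I would also try to exploit that $\pmBHT$ is dense in $\C_\Delta(\!(t_\Delta)\!)$ for the $t_\Delta$-adic topology, with completion $\C_\Delta(\!(t_\Delta)\!)$.

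I expect this step to be genuinely delicate. Polynomial truncations carry only finitely many coefficients, while the separating property in Arnold's construction is intrinsically infinitary. So the argument must show that the primes $\mathfrak{q}_n$ do not all collapse onto the single prime $\mathfrak{P}_s[t_\Delta,t_\Delta^{-1}]$, and this is where the proof will either succeed or require additional input.
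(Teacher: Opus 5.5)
Your case (i) is fine. In case (ii), however, the step you flag as the main obstacle fails outright, and no extra input can rescue it, because (ii) is false.

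Your contracted primes satisfy $\mathfrak{q}_n\subseteq\mathfrak{P}_s(\!(t_\Delta)\!)\cap\pmBHT=\mathfrak{P}_s[t_\Delta,t_\Delta^{-1}]$. So $\mathfrak{q}_n\cap\C_\Delta$ is a prime of $\C_\Delta$ contained in $\mathfrak{P}_s$. Every prime of $\C_\Delta$ is maximal, so $\mathfrak{q}_n\cap\C_\Delta=\mathfrak{P}_s$, and hence $\mathfrak{P}_s[t_\Delta,t_\Delta^{-1}]=\mathfrak{P}_s\pmBHT\subseteq\mathfrak{q}_n$. Therefore $\mathfrak{q}_n=\mathfrak{P}_s[t_\Delta,t_\Delta^{-1}]$ for every $n$. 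The contracted chain is constant, which is exactly the collapse you feared. No truncation, idempotent decomposition or density argument can separate ideals that are equal.

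The underlying reason is that $\pmBHT=\bigoplus_{i}\C_\Delta(i)$ is, as a ring, the Laurent polynomial ring $\C_\Delta[T,T^{-1}]$ over the zero-dimensional ring $\C_\Delta$.
\begin{itemize}
\item A chain of primes in $\C_\Delta[T]$ contracts to a chain in $\C_\Delta$, which must be constant, say equal to $\mathfrak{P}_s$.
\item Every prime lying over $\mathfrak{P}_s$ contains $\mathfrak{P}_s[T]$, so the chain comes from $\spec\bigl(\C_\Delta[T]/\mathfrak{P}_s[T]\bigr)\cong\spec\C[T]$, which has dimension $1$. Hence $\dim\C_\Delta[T]\le 1$.
\item The same bound passes to the localisation $\C_\Delta[T,T^{-1}]$.
\item The strict inclusion $\mathfrak{P}_s[T,T^{-1}]\subsetneq\mathfrak{P}_s[T,T^{-1}]+(T-1)$ gives $\dim\pmBHT\ge 1$.
\end{itemize}
So $\dim\pmBHT=1$ for every $\Delta$. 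Arnold's non-SFT phenomenon is specific to power series and does not survive in polynomial rings.

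The paper's own justification is a one-line appeal to $\C_\Delta(\!(t_\Delta)\!)$ being the $t_\Delta$-adic completion of $\pmBHT$. That is the same transfer idea as yours, and it fails for the same reason: the Krull dimension of a dense subring is not inherited from its completion. The correct statement is $\dim\pmBHT=1$ regardless of $|\Delta|$.
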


This shows that $\pmBHT$ is not well-behaved when $|\Delta|>1$; thus, we want to avoid this complication and we will refine the period ring $\pmBHT$ so that it is much more well-behaved and comparable to classical $p$-adic Hodge theory as follows:\\

For every $s\in\mathfrak{S}$, consider the map
\begin{align*}
    \Psi_s:\C_\Delta(\!(t_\Delta)\!)&\to \C(\!(t)\!),\\
    \sum\limits_{i=n}^\infty a_it^i_\Delta&\mapsto \sum\limits_{i=n}^\infty\beta_s(a_i)t^i.
\end{align*}
Suppose $x=\sum\limits_{i=n}^\infty a_it^i_\Delta\in\bigcap\limits_{s\in\mathfrak{S}}\ker\Psi_s$ is an arbitrary element; then for all $i\geq n$, we have $\beta_s(a_i)=0$ for all $s\in\mathfrak{S}$. Thus, $a_i=0$ for all $i\geq n$ and we get $x=0$. This shows that the collection $\{\ker\Psi_s\}_{s\in\mathfrak{S}}$ is a collection of maximal ideals such that $\bigcap\limits_{s\in\mathfrak{S}}\ker\Psi_s=\{0\}$, and this collection also exhausts the maximal ideals. Therefore, we obtain the profinite completion $\mCHT=\widehat{\C_\Delta(\!(t_\Delta)\!)}$ as given in Definition \ref{Profinite completion}. This ring is a VNR ring whose set of maximal ideals is in one-to-one correspondence with the embedding family $\mathfrak{S}$. Also note that this VNR ring is not isomorphic to $\widehat{\C}_\Delta(\!(t_\Delta)\!)$, the Laurent series over the profinite completion of $\C_\Delta$, as the latter is not a VNR ring by the same proof as in Proposition \ref{Hodge-Tate Period rings are infinite dimensional}.\\

\begin{definition}
    We define the \emph{multivariate Hodge-Tate period ring} $\mBHT$ to be the closure of $\pmBHT$ inside $\mCHT$ with respect to the profinite topology.
\end{definition}

It is easy to see that $\mCHT$ is the $t_\Delta$-adic completion of the total ring of quotients of $\mBHT$. We will now show that $\mBHT$ also has a similar decomposition into a direct sum as in Definition \ref{Pre-Hodge Tate Period Rings}.

\begin{proposition}
    We have $\mBHT=\bigoplus\limits_{i\in\mathbb{Z}}\widehat{\C}_\Delta(i)=\widehat{\C}_\Delta\left[t_\Delta,\frac{1}{t_\Delta}\right]$.
\end{proposition}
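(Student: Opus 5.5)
We plan to identify the profinite completion $\mCHT$ concretely and then compute the closure of $\pmBHT=\bigoplus_{i}\C_\Delta t_\Delta^i$ inside it. The first step is to describe the finite-level quotients. For a finite subset $\mathfrak{S}'\subset\mathfrak{S}$, the ideal $\bigcap_{s\in\mathfrak{S}'}\ker\Psi_s$ is the intersection of finitely many pairwise comaximal maximal ideals. By the Chinese remainder theorem we therefore expect
\begin{align*}
    \C_\Delta(\!(t_\Delta)\!)\Big/\bigcap\limits_{s\in\mathfrak{S}'}\ker\Psi_s\;\cong\;\prod\limits_{s\in\mathfrak{S}'}\C(\!(t)\!).
\end{align*}
Since $\Psi_s$ acts coefficientwise via $\beta_s$, this quotient should also equal $\left(\C_\Delta/\bigcap_{s\in\mathfrak{S}'}\mathfrak{P}_s\right)(\!(t_\Delta)\!)\cong\prod_{s\in\mathfrak{S}'}\C(\!(t)\!)$. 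In the same way, the $\pmBHT$-image at level $\mathfrak{S}'$ is $\left(\C_\Delta/\bigcap_{s\in\mathfrak{S}'}\mathfrak{P}_s\right)[t_\Delta,t_\Delta^{-1}]$. Each quotient map sends $t_\Delta^i$ to $t^i$ in every factor, and the transition maps commute with the grading.

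Next we show $\widehat{\C}_\Delta[t_\Delta,t_\Delta^{-1}]$ embeds into $\mCHT$ and lies in the closure of $\pmBHT$. We send $\sum_i c_it_\Delta^i$, with finitely many nonzero $c_i\in\widehat{\C}_\Delta$, to the compatible system whose level-$\mathfrak{S}'$ component is $\sum_i (c_i \bmod \mathfrak{a}_{\mathfrak{S}'})t^i$. This is well defined and injective because $\widehat{\C}_\Delta=\varprojlim\C_\Delta/\mathfrak{a}_{\mathfrak{S}'}$. For density, each $c_i$ is a limit in the profinite topology of elements of $\C_\Delta$, since $\C_\Delta$ is dense in its own profinite completion. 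Hence each finite sum is approximated at every finite level by an element of $\pmBHT$, giving $\widehat{\C}_\Delta[t_\Delta,t_\Delta^{-1}]\subseteq\mBHT$.

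For the reverse inclusion, let $x=(x_{\mathfrak{S}'})\in\mCHT$ be a limit of $y_\lambda\in\pmBHT$. Each component $x_{\mathfrak{S}'}$ lies in $\prod_{s\in\mathfrak{S}'}\C[t,t^{-1}]$. Reading off the $t^i$-coefficient at each level gives compatible elements of $\C_\Delta/\mathfrak{a}_{\mathfrak{S}'}$, hence elements $c_i\in\widehat{\C}_\Delta$. The remaining task is to show that only finitely many $c_i$ are nonzero, so that $x=\sum c_it_\Delta^i$ is a genuine Laurent polynomial.

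This finiteness is the main obstacle. A limit of Laurent polynomials in the profinite topology can a priori have degrees growing with $\mathfrak{S}'$: when $\mathfrak{S}$ is infinite, a system such as $\left(\sum_{s\in\mathfrak{S}'}e_st^{n_s}\right)$ built from idempotents with unbounded $n_s$ would converge in $\mCHT$ to an element of unbounded degree. Our first attempt will be to use the topology generated jointly with the $t_\Delta$-adic (graded) structure, as in the canonical topology on $\mBdr$. That is, we take the closure of each graded piece $\C_\Delta t_\Delta^i$ separately, and define the closure of the direct sum as the direct sum of these closures, each equal to $\widehat{\C}_\Delta t_\Delta^i$. If the intended notion of closure is genuinely the product-of-levels topology, then we expect only the weaker statement to hold: a degree-bounded element lies in $\widehat{\C}_\Delta[t_\Delta,t_\Delta^{-1}]$. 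In that case we would restrict to degree-bounded elements, i.e.\ those with $x_{\mathfrak{S}'}$ of uniformly bounded $t$-degree, and argue coefficientwise as above.
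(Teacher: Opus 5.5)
Your inclusion $\widehat{\C}_\Delta[t_\Delta,t_\Delta^{-1}]\subseteq\mBHT$ is correct, and it is the same density argument as the second half of the paper's proof. For the reverse inclusion, the example you raise is more than an obstacle: it is a genuine counterexample to the proposition as written whenever $|\Delta|\geq 2$. So no argument can close this gap. (For $|\Delta|=1$, $\mathfrak{S}$ is a single point and the statement is trivial.)

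To see this concretely, apply the Chinese remainder theorem. It gives $\mCHT\cong\prod_{s\in\mathfrak{S}}\C(\!(t)\!)$ and $\widehat{\C}_\Delta\cong\prod_{s\in\mathfrak{S}}\C$, and the profinite topology is the product of discrete topologies. The image of $\pmBHT$ at each finite level $\mathfrak{S}'$ is all of $\prod_{s\in\mathfrak{S}'}\C[t,t^{-1}]$. Hence the closure of $\pmBHT$ is exactly $\prod_{s\in\mathfrak{S}}\C[t,t^{-1}]$. By contrast, $\widehat{\C}_\Delta[t_\Delta,t_\Delta^{-1}]$ consists only of the families whose $t$-degrees are bounded uniformly in $s$.

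The set $\mathfrak{S}$ is infinite for $|\Delta|\geq 2$: the families $(\id,\sigma,\id,\dots,\id)$ with $\sigma\in\G_{K_0}$ are pairwise inequivalent. Choose distinct $s_1,s_2,\dots$ and let $x$ have $s_k$-component $t^k$ and all other components $0$. Then $x\in\mBHT\setminus\widehat{\C}_\Delta[t_\Delta,t_\Delta^{-1}]$.

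The paper's proof fails at the same point. It asserts that $\widehat{\C}_\Delta[t_\Delta,t_\Delta^{-1}]$ is closed in $\mCHT$. To do so it passes from ``infinitely many coefficients $a_i$ are nonzero'' to ``for some $s$, $\beta_s(a_i)\neq 0$ for infinitely many $i$''. For the $x$ above, every $a_i$ with $i\geq 1$ is nonzero (it is the idempotent supported at $s_i$), yet each $\Psi_s(x)$ is a single monomial. Moreover, a general element of $\mCHT$ need not even admit a uniform lower bound $n$ on its exponents, which that argument silently assumes.

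Your two fallbacks should therefore be presented as corrected statements, not as a proof of this one.
\begin{itemize}
\item Option (a) is not a closure with respect to any topology you have actually placed on $\mCHT$. It redefines $\mBHT$ as $\bigoplus_i\overline{\C_\Delta t_\Delta^i}$, after which the claim reduces to $\overline{\C_\Delta t_\Delta^i}=\widehat{\C}_\Delta t_\Delta^i$. That equality does follow from your coefficient argument.
\item Option (b) says the elements of $\mBHT$ of uniformly bounded degree form exactly $\widehat{\C}_\Delta[t_\Delta,t_\Delta^{-1}]$. This is correct, and your coefficientwise argument proves it.
\end{itemize}
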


\begin{proof}
    We only need to show that the closure of $\C_\Delta\left[t_\Delta,\frac{1}{t_\Delta}\right]$ in $\mCHT$ with the profinite topology is equal to $\widehat{\C}_\Delta\left[t_\Delta,\frac{1}{t_\Delta}\right]$. Clearly, $\C_\Delta\left[t_\Delta,\frac{1}{t_\Delta}\right]\subset\widehat{\C}_\Delta\left[t_\Delta,\frac{1}{t_\Delta}\right]$, and $\widehat{\C}_\Delta\left[t_\Delta,\frac{1}{t_\Delta}\right]$ is a closed subset of $\mCHT$. Indeed, if $f(T)\in\mCHT$ belongs to the closure of $\widehat{\C}_\Delta\left[t_\Delta,\frac{1}{t_\Delta}\right]$, then for every finite subset $\mathfrak{S}'\subset\mathfrak{S}$, there exists a Laurent polynomial $f_{\mathfrak{S}'}(T)\in\widehat{\C}\left[t_\Delta,\frac{1}{t_\Delta}\right]$ such that $f(t_\Delta)-f_{\mathfrak{S}'}(t_\Delta)\in\bigcap\limits_{s\in\mathfrak{S}'}\ker\Psi_s$. Suppose $f(t_\Delta)=\sum\limits_{i=n}^\infty a_it^i_\Delta$; then there exists a finite subset $\mathfrak{S}'\subset \mathfrak{S}$ such that for all $s\in\mathfrak{S}$, we have $\beta_s(a_i)\neq 0$ for infinitely many $i\geq n$. But since $f_\mathfrak{S'}(t_\Delta)$ is a Laurent polynomial, suppose of degree $m$, then there exists $m'\geq m$ such that $\beta_s(a_{m'})\neq 0$. Therefore, $\Psi_s(f(t_\Delta)-f_{\mathfrak{S}'}(t_\Delta))\neq 0$ as the coefficient of $t^{m'}$ is non-zero. This gives a contradiction if all but finitely many $a_i$ are non-zero. Therefore, $\widehat{\C}_\Delta\left[t_\Delta,\frac{1}{t_\Delta}\right]$ is a closed subset of $\mCHT$.\\

    Let $f(t_\Delta)=\sum\limits_{i=n}^\infty a_it^i_\Delta\in\widehat{\C}_\Delta\left[t_\Delta,\frac{1}{t_\Delta}\right]$; then for each $i\geq n$ and each finite subset $\mathfrak{S}'\subset\mathfrak{S}$, there exists $a_{i,\mathfrak{S}'}\in\C_\Delta$ such that $a_i-a_{i,\mathfrak{S}'}\in\bigcap\limits_{s\in\mathfrak{S}'}\mathfrak{P}_s$. This means $f_{\mathfrak{S}'}(t_\Delta)=\sum\limits_{i=n}^\infty a_{i,\mathfrak{S}'}t^i_\Delta\in \C_\Delta\left[t_\Delta,\frac{1}{t_\Delta}\right]$ satisfies $f(t_\Delta)-f_{\mathfrak{S}'}(t_\Delta)\in\bigcap\limits_{s\in\mathfrak{S}'}\mathfrak{P}_s(\!(t_\Delta)\!)$. Therefore, the net $(f_{\mathfrak{S}'}(t_\Delta))_{\mathfrak{S'}\subset\mathfrak{S}}$ in $\C_\Delta\left[t_\Delta,\frac{1}{t_\Delta}\right]$ converges to $f(t_\Delta)$. This completes the proof.
\end{proof}

Now we will prove that the gradation of the filtered ring $\mBdr$ is the graded ring $\mBHT$.

\begin{proposition}\label{de Rham Hodge Tate Proposition}
    We have $\gr\mBdr=\mBHT$.
\end{proposition}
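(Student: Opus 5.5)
The plan is to show that each graded piece is $\gr^i\mBdr=\Fil^i_\Delta\mBdr/\Fil^{i+1}_\Delta\mBdr\cong\widehat{\C}_\Delta(i)$ as $\G_{K,\Delta}$-modules, compatibly with multiplication. Summing over $i$ and using the description $\mBHT=\bigoplus_{i\in\mathbb{Z}}\widehat{\C}_\Delta(i)$ from the preceding proposition then gives the statement.

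First, treat $i=0$. By definition $\Fil^0_\Delta\mBdr=\mBdrp$ and $\Fil^1_\Delta\mBdr=\xi\mBdrp$. The extended map $\theta_\Delta:\mBdrp\to\widehat{\C}_\Delta$ is surjective, and its kernel is the principal ideal generated by the image of $\xi$, as recorded at the end of the proof of the extension proposition. Hence $\gr^0\mBdr\cong\widehat{\C}_\Delta$, $\G_{K,\Delta}$-equivariantly, since $\theta_\Delta$ is equivariant.

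For general $i$, replace $\xi$ by $t_\Delta$. The lemma $t_\Delta\in\Fil^1_\Delta\setminus\Fil^2_\Delta$ says that $t_\Delta=\xi u$ with $u\notin\xi\mBdrp$. I would strengthen this to: $u$ is a unit in $\mBdrp$. To do this, check that $\theta_\Delta(u)$ is a unit of $\widehat{\C}_\Delta$. Work componentwise under each $\Phi_s$. There $\Phi_s(t_\Delta)=t$ and $\Phi_s(\xi)=\xi_s$ are both uniformizers of $\Bdrp$ by Proposition 6.22 of \cite{Fontaine}, so $\Phi_s(u)$ is a unit and $\overline{\Phi}_s(u)\neq0$ for every $s$. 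Thus $u$ lies in no maximal ideal $\ker\Phi_s+(\xi)$, using the classification of $\spm\mBdrp$. Hence $u$ is a unit and $\Fil^i_\Delta\mBdr=t_\Delta^i\mBdrp$.

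Next I need multiplication by $t_\Delta^i$ to be injective on $\mBdr$, i.e.\ that $t_\Delta$ is a nonzerodivisor. This again follows componentwise: if $t_\Delta x=0$ then $t\,\Phi_s(x)=0$ in the field $\Bdr$ for every $s$, and $\bigcap_s\ker\Phi_s=0$ gives $x=0$. Then $x\mapsto t_\Delta^i x$ induces an isomorphism $\gr^0\to\gr^i$, and therefore $\gr^i\mBdr\cong\widehat{\C}_\Delta\cdot t_\Delta^i$.

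For the Galois action, note that $g(t_\Delta)=\log[g\epsilon]$. I expect $g\epsilon=\epsilon^{\chi_\Delta(g)}$ in a suitable sense, giving $g t_\Delta=\chi_\Delta(g)t_\Delta$, so that $\widehat{\C}_\Delta t_\Delta^i\cong\widehat{\C}_\Delta(i)$. Multiplicativity of the resulting map $\gr\mBdr\to\mBHT$, sending the class of $a t_\Delta^i$ to $\theta_\Delta(a)u^i$, is then formal.

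The main obstacle is this last twist identification. The character $\chi_\Delta=\det\circ\boldsymbol\chi$ was defined through the multivariate roots of unity $\mu_{\bp^\bn}$. It is not obvious that $\epsilon$, built from a single compatible system in $\mathcal{O}^\flat_{\C,\Delta}$, transforms by $\chi_\Delta$. The first approach I would try is to verify the identity after applying each $\Phi_s$, using the remark that $\chi_\Delta\circ\eta_s$ is the classical cyclotomic character, and then descend via injectivity of $\prod_s\Phi_s$. This check only sees the subgroups $\eta_s(\G_{K_s})$, however. If it fails to determine the full $\G_{K,\Delta}$-action, I would instead define the twist on $\gr^i$ through the action on $t_\Delta$ directly and compare it with $\chi_\Delta$.
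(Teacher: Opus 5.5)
The gap is the step you flag, and it cannot be closed as stated. For $|\Delta|\ge 2$, the identity $g\cdot t_\Delta=\chi_\Delta(g)\,t_\Delta$ is false for every choice of $\epsilon$ with $t_\Delta\neq0$. The paper's proof has the same outline as yours but simply asserts this identity. Your steps 1--3 are in fact more careful than the paper about $\Fil^i_\Delta\mBdr=t^i_\Delta\mBdrp$.

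Your proposed $\Phi_s$-check, done correctly, refutes the identity. Pick $s$ with $\Phi_s(t_\Delta)\neq0$; step 2 needs this for every $s$ anyway.
\begin{itemize}
\item For $h\in\G_{K_s}$, compatibility of $\Phi_s$ with $\eta_s$ gives $\Phi_s(\eta_s(h)\,t_\Delta)=h\cdot\Phi_s(t_\Delta)=\chi(h)\,\Phi_s(t_\Delta)$. This holds because $\Phi_s(t_\Delta)=\log[\beta^\flat_s(\epsilon)]$ is the logarithm of a compatible system of $p$-power roots of unity in $\C$.
\item On the other side, each $s_\alpha^{-1}hs_\alpha$ raises roots of unity to the power $\chi(h)$. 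So $\boldsymbol\chi(\eta_s(h))=\diag(\chi(h),\dots,\chi(h))$ and $\chi_\Delta(\eta_s(h))=\chi(h)^{|\Delta|}$. The remark that $\chi_\Delta\circ\eta_s$ is the cyclotomic character, which your check relies on, is off by this power.
\item Since $\Bdrp$ is a domain, the identity would force $\chi(h)^{|\Delta|-1}=1$ on the open subgroup $\chi(\G_{K_s})\subset\mathbb{Z}_p^\times$, which is impossible.
\end{itemize}

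What your argument does establish is the following. If $\epsilon$ is built from roots of unity in the $\alpha$-th factor (via $\iota_\alpha$), then $g\epsilon=\epsilon^{\chi_\alpha(g)}$, so $g\cdot t_\Delta=\chi_\alpha(g)\,t_\Delta$. Steps 1--3 then give $\gr\mBdr\cong\bigoplus_{i\in\mathbb{Z}}\widehat{\C}_\Delta\,\bar t_\Delta^{\,i}$, with $\G_{K,\Delta}$ acting on $\bar t_\Delta$ through $\chi_\alpha$. The choice of $\alpha$ is immaterial, because the ratio of two such choices of $t_\Delta$ is a unit of $\pmBdrp$ on which $\G_{K,\Delta}$ acts by $\chi_\alpha\chi_\beta^{-1}$.

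This twist is genuinely different from the $\chi_\Delta$-twist. A unit of $\widehat{\C}_\Delta$ on which $\G_{K,\Delta}$ acts by $\chi_\Delta\chi_\alpha^{-1}$ would have $\beta_s$-component a nonzero $c\in\C$ with $h(c)=\chi(h)^{|\Delta|-1}c$ for all $h\in\G_{K_s}$, contradicting Tate's theorem. So your second fallback, defining the twist through the action on $t_\Delta$, is the right repair, but the comparison with $\chi_\Delta=\det\circ\boldsymbol\chi$ will come out negative. The statement holds only if the Tate twist in $\mBHT$ is defined via $\chi_\alpha$.

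A smaller point on step 2: run the unit argument in $\pmBdrp$. There $t_\Delta=\xi u$ already holds, and since $\xi$ lies in the Jacobson radical, the maximal ideals really are the $\ker\overline{\Phi}_s$. By the Chinese remainder theorem the completion $\mBdrp$ is $\prod_{s\in\mathfrak{S}}\Bdrp$, which for $|\Delta|\ge2$ has further ultrafilter-type maximal ideals. So the quoted description of $\spm\mBdrp$ is not a safe basis. Your conclusion that $u$ is a unit still holds, since a compatible family of componentwise units is a unit.
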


\begin{proof}
    Written multiplicatively, we have $\mathbb{Z}_p(i)=\mathbb{Z}_p t^i_\Delta$ by replacing $u=t_\Delta$. Also, for $\lambda\in\mathbb{Z}_p$, we have
    \begin{align*}
        \log([\varepsilon]^\lambda)=\lambda\log([\varepsilon])=\lambda t_\Delta.
    \end{align*}
    For the cyclotomic character $\chi_\Delta$, we have $g\cdot t^i_\Delta=\chi^i_\Delta(g)t^i_\Delta$, and hence for all $i\in\mathbb{Z}$,
    \begin{align*}
        \Fil^i_\Delta\mBdr=t^i_\Delta\mBdrp=\mBdrp(i).
    \end{align*}
    Therefore,
    \begin{align*}
        \gr^i\mBdr&=\bigoplus\limits_{i\in\mathbb{Z}}\Fil^i_\Delta\mBdr/\Fil^{i+1}_\Delta\mBdr\\
                &=\bigoplus\limits_{i\in\mathbb{Z}}\mBdrp(i)/\mBdrp(i+1)\\
                &=\bigoplus\limits_{i\in\mathbb{Z}}\widehat{\C}_\Delta(i)=\mBHT.
    \end{align*}
\end{proof}

\begin{corollary}
    We have $\gr\pmBdr=\pmBHT$.
\end{corollary}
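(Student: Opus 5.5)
The plan is to repeat the argument of Proposition \ref{de Rham Hodge Tate Proposition} at the pre-completion level, replacing $\mBdr$, $\mBdrp$ and $\widehat{\C}_\Delta$ by $\pmBdr$, $\pmBdrp$ and $\C_\Delta$. The filtration on $\pmBdr$ is $\Fil^i\pmBdr=\xi^i\pmBdrp$ for $i\in\mathbb{Z}$. This makes sense because $\pmBdr=\pmBdrp[1/\xi]$. I will compute each graded piece $\Fil^i/\Fil^{i+1}$ and then check that the multiplication agrees with that of $\pmBHT$ from Definition \ref{Pre-Hodge Tate Period Rings}.

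\textbf{Step 1: the degree-zero piece.} Since $\theta_\Delta:\pmBdrp\to\C_\Delta$ is surjective with kernel $\xi\pmBdrp$, we get $\gr^0\pmBdr=\pmBdrp/\xi\pmBdrp\cong\C_\Delta$.

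\textbf{Step 2: $\xi$ is a non-zero-divisor on $\pmBdrp$.} I would prove this from the inverse limit description: if $\xi x=0$ with $x=(x_n)$, $x_n\in\mAinf[1/p]/(\xi^n)$, then $\xi x_{n+1}\in(\xi^{n+1})$ in $\mAinf[1/p]$. This should force $x_{n+1}\in(\xi^n)$, hence $x_n=0$ for all $n$. The input needed is that $\xi$ is a non-zero-divisor in $\mAinf[1/p]$. I would get this from the Teichmüller expansion, in the same way the proof that $\ker\theta_\Delta=(\xi)$ shows $\bigcap_n(\xi^n)=0$. Alternatively, one can check it after each $\Phi_s$ and use $\bigcap_s\ker\Phi_s=0$ (Corollary \ref{Prime ideal of prim-integral period ring}). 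The obstruction there is that $\Phi_s(y)=0$ gives no information about $y$ itself, so I would pursue the direct Witt-vector argument.

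\textbf{Step 3: all graded pieces.} Given Step 2, multiplication by $\xi^i$ gives an isomorphism $\pmBdrp/\xi\pmBdrp\cong\xi^i\pmBdrp/\xi^{i+1}\pmBdrp$. Hence $\gr^i\pmBdr\cong\C_\Delta$ as a $\C_\Delta$-module, generated by the image of $\xi^i$.

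\textbf{Step 4: identifying the Galois action and the ring structure.} To make $\gr^i\pmBdr$ equal to $\C_\Delta(i)$ equivariantly, I replace $\xi$ by $t_\Delta$. This requires knowing that $t_\Delta/\xi$ is a unit of $\pmBdrp$, which is equivalent to $\theta_\Delta(t_\Delta/\xi)$ being a unit of $\C_\Delta$. I would write $t_\Delta=\pi\cdot u$ with $u=\sum(-1)^{n+1}\pi^{n-1}/n\equiv1$ modulo $\xi$. The remaining task is to compare $\pi=[\epsilon]-1$ with $\xi$ modulo $\xi^2$, i.e.\ to show that $\theta_\Delta(\pi/\xi)$ is invertible.

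This is the step I expect to be the main obstacle. For each $s$, Fontaine's Proposition 6.22 gives $\beta_s(\theta_\Delta(\pi/\xi))\neq0$, so the element avoids every $\mathfrak{P}_s$. Since every prime of $\C_\Delta$ is some $\mathfrak{P}_s$, an element outside all of them is a unit in $\C_\Delta$. This pointwise criterion is exactly what makes the argument work.

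Once $\Fil^i=t_\Delta^i\pmBdrp$ is established, the class of $t_\Delta^i$ transforms by $\chi_\Delta^i$, so $\gr^i\cong\C_\Delta(i)$. Products of classes multiply as $cu^i\cdot c'u^j=cc'u^{i+j}$. Summing over $i$ then gives $\gr\pmBdr=\bigoplus_i\C_\Delta(i)=\pmBHT$.
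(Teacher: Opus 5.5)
Your proof is correct, but it takes a different route from the paper. The paper never works with $\xi^i\pmBdrp$ intrinsically. It equips $\pmBdr$ with the subspace filtration $\Fil^i_\Delta\mBdr\cap\pmBdr$ and derives the corollary from Proposition~\ref{de Rham Hodge Tate Proposition}, simply asserting that intersecting with the uncompleted ring cuts $\gr^i\mBdr=\widehat{\C}_\Delta(i)$ down to $\C_\Delta(i)$.

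You instead stay entirely before completion. You show $\xi$ is a non-zero-divisor, use $\theta_\Delta$ to identify $\gr^0$ with $\C_\Delta$, and move this to every degree by multiplication by $\xi^i$. You then apply the pointwise criterion: an element of $\C_\Delta$ outside every $\mathfrak{P}_s$ is a unit, because the $\mathfrak{P}_s$ exhaust $\spec\C_\Delta$. Fed with Fontaine's result through each $\Phi_s$, this makes $t_\Delta/\xi$ a unit of $\pmBdrp$, hence $\Fil^i=t_\Delta^i\pmBdrp$.

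What your route buys is a self-contained argument that does not depend on the profinite completion. It also actually proves the identity $\Fil^i=t_\Delta^i\cdot(\text{integral ring})$, which the paper only asserts, even in the completed proposition. The paper's shortcut, once made rigorous, needs essentially your ingredients anyway. One uses that the extended $\theta_\Delta$ kills $\xi\mBdrp$, that $\C_\Delta\to\widehat{\C}_\Delta$ is injective, and that $\xi$ is invertible in $\mBdr$, and checks inductively that $\xi^k\mBdrp\cap\pmBdrp=\xi^k\pmBdrp$. So the two filtrations coincide, and your computation also settles the paper's formulation. In both routes the equivariance $g(t_\Delta)=\chi_\Delta(g)t_\Delta$ is taken over from the paper rather than proved.

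One correction to your Step 2 aside: the route you set aside does work. If $\xi y=0$ in $\mAinf$, apply $W(\beta^\flat_s)\colon\mAinf\to\Ainf$. This gives $\xi_s\cdot W(\beta^\flat_s)(y)=0$ in the domain $\Ainf$ with $\xi_s\neq0$, so $W(\beta^\flat_s)(y)=0$ for every $s$. The vanishing of the intersection of these kernels (which is what the paper's proof of Corollary~\ref{Prime ideal of prim-integral period ring} uses) is precisely what turns this into $y=0$.

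If you keep the direct Witt-vector argument instead, name its real input: $\varpi$ is a non-zero-divisor in $\mathcal{O}^\flat_{\C,\Delta}$. This follows by applying the multiplicative maps $x\mapsto x^{(n)}$ and using $(\varpi^{(n)})^{p^n}=-p$ together with $p$-torsion-freeness of $\mathcal{O}_{\C,\Delta}$. Then reduce modulo $p$ and use that $\mAinf$ is $p$-torsion-free and $p$-adically separated.
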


\begin{proof}
    Consider the filtration on the multivariate pre-de Rham period ring $\pmBdr$ induced by the subspace topology, which is defined by setting $\Fil^i_\Delta\pmBdr = \Fil^i_\Delta\mBdr \cap \pmBdr$ for each $i \in \mathbb{Z}$. By definition, the associated grading construction is compatible with this filtration. Since the graded pieces of the completed period ring satisfy $\gr^i\mBdr = \widehat{\C}_\Delta(i)$ as shown in Proposition \ref{de Rham Hodge Tate Proposition}, intersecting the filtered components with the pre-integral substructures yields exactly the uncompleted terms. Specifically, we have $\Fil^i_\Delta\pmBdr / \Fil^{i+1}_\Delta\pmBdr \simeq \C_\Delta(i)$ for each $i \in \mathbb{Z}$. Summing over all grading pieces, we obtain $\gr\pmBdr = \bigoplus_{i\in\mathbb{Z}}\C_\Delta(i) = \pmBHT$, which completes the proof.
\end{proof}


\section{Admissible Representations}\label{Admissible Representations}

 The Definition \ref{Free Representations definition} makes sense when we replace $\G_K$ by $\G_{K,\Delta}$ and with arbitrary $B$. Then we have the following proposition about classifying free representations using non-abelian cohomology:

\begin{proposition}[\cite{Fontaine}, Proposition 3.7]
    Suppose $d$ is a positive integer. There is a one-to-one correspondence 
    \begin{align*}
        \text{(Free $B$-representations of $G_{K,\Delta}$)}/\simeq\leftrightarrow \cH^1_{\text{cont}}(\G_{K,\Delta},\GL_d(B)).
    \end{align*}
    In this correspondence, trivial representations go to the distinguished point of $\cH^1_{\text{cont}}(\G_{K,\Delta},\GL_d(B))$.
\end{proposition}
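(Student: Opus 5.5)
I will follow Fontaine's argument for Proposition 3.7 essentially verbatim; the only change is that $\G_K$ is replaced by the profinite group $\G_{K,\Delta}$. That argument never uses that $B$ is a domain or anything specific about $\G_K$, so it transfers directly. The plan is to construct a cocycle from a free representation together with a chosen basis, and to show that changing the basis changes the cocycle by a coboundary.

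\textbf{Step 1 (from representations to cocycles).} Let $X$ be a free $B$-representation of rank $d$, and fix a basis $e_1,\dots,e_d$. For $g\in\G_{K,\Delta}$, write $g(e_j)=\sum_i a_{ij}(g)e_i$ and set $U_g=(a_{ij}(g))\in \mathrm{M}_d(B)$. Semilinearity, $g(\lambda x)=g(\lambda)g(x)$, together with $(gh)(e_j)=g(h(e_j))$ gives $U_{gh}=U_g\,g(U_h)$. Since $U_1$ is the identity, $U_g$ is invertible with inverse $g(U_{g^{-1}})$. Continuity of $g\mapsto U_g$ follows from continuity of the action on $X$, transported to $B^d$ through the chosen basis. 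Hence $g\mapsto U_g$ is a continuous $1$-cocycle with values in $\GL_d(B)$.

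\textbf{Step 2 (well-definedness up to isomorphism).} Suppose we change the basis by $P\in\GL_d(B)$, or, equivalently, transport the structure along an isomorphism $X\simeq X'$. Then the new cocycle is $U'_g=P^{-1}U_g\,g(P)$, which is cohomologous to $U_g$. Thus the class in $\cH^1_{\mathrm{cont}}(\G_{K,\Delta},\GL_d(B))$ depends only on the isomorphism class of $X$.

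\textbf{Step 3 (bijectivity).} For surjectivity, given a continuous cocycle $U$, I equip $B^d$ with the action $g(\lambda e_j)=g(\lambda)\sum_i a_{ij}(g)e_i$. The cocycle identity makes this a group action, and continuity of $U$ makes it continuous. For injectivity, if $U'_g=P^{-1}U_g\,g(P)$, then the $B$-linear map sending the basis of $X'$ to the basis of $X$ transformed by $P$ is $\G_{K,\Delta}$-equivariant, so $X\simeq X'$. Finally, $X$ is trivial exactly when it has a basis with $U_g=1$ for all $g$. Equivalently, its class is that of the trivial cocycle, which is the distinguished point.

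The only step needing care is the continuity claims in Steps 1 and 3. The topology on a finite free $B$-module is the product topology induced by any basis, and it is independent of the basis because matrix multiplication by elements of $\GL_d(B)$ is continuous. Here $B$ is a topological ring, possibly carrying a profinite topology as with $\mBdr$ or $\mBHT$. Beyond this point-set check, the argument is purely formal.
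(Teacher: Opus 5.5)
Your proposal is correct and is essentially the argument the paper relies on: the paper gives no proof of its own and simply cites Fontaine's Proposition 3.7, whose proof is exactly your construction — the cocycle $U_{gh}=U_g\,g(U_h)$ attached to a basis, the change-of-basis formula $U'_g=P^{-1}U_g\,g(P)$, and the reconstruction of a representation from a cocycle — carried over unchanged to $\G_{K,\Delta}$ and a possibly non-domain $B$. You were also right to fix the rank $d$ on the left-hand side, which the statement as written leaves implicit.
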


Let $B$ be a topological ring on which $G_{K,\Delta}$ acts continuously, and $F$ is a closed subring of $B^{G_{K,\Delta}}$ that is a field. Suppose $C$ is the total ring of fractions of $B$, which we will assume to be a VNR ring, then the action of $\G_{K,\Delta}$ extends naturally to $C$. Suppose the following condition is satisfied:\\

\emph{For every prime ideal $\mathfrak{Q}\in\spec C$, there exists $s(\mathfrak{Q})\in\mathfrak{S}$ such that $\G_{K_{s(\mathfrak{Q})}}$ acts on $C_{\mathfrak{Q}}$ which is compatible with the maps $\eta_s$ and $C\to C_{\mathfrak{Q}}$}.\\

Then we have the following definition:

\begin{definition}
    A reduced ring $B$ is called \emph{$(F,\G_{K,\Delta})_\Delta$-regular} if the following conditions hold:
    \begin{enumerate}
        \item[(i)] $C$ is VNR ring,
        \item[(ii)] $E=B^{G_{K,\Delta}}=C^{\G_{K,\Delta}}$ and $C$ is faithfully flat over $E$.
        \item[(iii)] For $b\in B$, a non-zero divisor, if for any $g\in\G_{K,\Delta}$, there exists $\lambda=\lambda(g)\in F$ such that $g(b)=\lambda b$, then $b$ is invertible in $B$.
    \end{enumerate}
\end{definition}

\begin{remark}
    The inclusion $E\subset B\subset C$ induces a surjective map $\spec C\to \spec B\to \spec E$. If $C$ is faithfully flat over $E$, then $\spec B\to\spec E$ is also surjective and hence $B$ is also faithfully flat over $E$. Using Lemma \ref{fixedpointofVNR}, we see that $E$ is also a VNR ring.
\end{remark}

\begin{definition}
    A representation $V$ of $\G_{K,\Delta}$ is called \emph{$B$-admissible} if $B\otimes_F V$ is a trivial representation of $\G_{K,\Delta}$.
\end{definition}

Recall that a $B$-representation is called trivial if it contains a basis fixed by $\G_{K,\Delta}$. Let $V$ be any $F$-representation of $\G_{K,\Delta}$, then $B\otimes_F V$ is a free $B$-representation of $\G_{K,\Delta}$. The action of $\G_{K,\Delta}$ is given by $g(\lambda\otimes x)=g(\lambda)\otimes g(x)$. Define
\begin{align*}
    \bD_B(V):=(B\otimes_F V)^{\G_{K,\Delta}}
\end{align*}

Then we hae a natural $\G_{K,\Delta}$-equivariant linear map
\begin{align*}
    \alpha_V: B\otimes_E\bD(V)&\to B\otimes_F V\\
            \lambda\otimes x  &\mapsto \lambda x
\end{align*}
where the action on the left-hand side is given by $g(\lambda\otimes x)=g(\lambda)\otimes x$.\\

To proceed to find Tannakian subcategories, we need standard results on faithfully flat descent and on projective modules over VNR rings.

\begin{proposition}[\cite{Wed-Gor},Proposition 14.48;\cite{stacks-project},Theorem 10.95.6]\label{Faithfully Flat Descent of Modules} Let $A\to A'$ be a faithfully flat ring homomorphism, let $M$ be an $A$-module and write $M':= M \otimes_A A'$. Then
\begin{enumerate}
    \item[(i)] If $M'$ is flat over $A'$ , then $M$ is flat over $A$.
    \item[(ii)] If $M'$ is of finite type over $A'$ , then $M$ is of finite type over $A$.
    \item[(iii)] If $M'$ is of finite presentation over $A'$ , then $M$ is of finite presentation over $A$.
    \item[(iv)] If $M'$ is locally free of rank n over $A'$, then $M$ is locally free of rank $n$ over $A$.
    \item[(v)] If $M'$ is projective over $A'$, then $M$ is projective over $A$.
\end{enumerate}
\end{proposition}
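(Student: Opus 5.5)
The statement is the standard faithfully flat descent package, cited from Wedhorn–Görtz and the Stacks Project. I will recall the proofs rather than reprove everything from scratch. The common engine is that $M\to M'$ is injective and that the functor $-\otimes_A A'$ is exact and reflects exactness, since $A\to A'$ is faithfully flat. In particular, a sequence of $A$-modules is exact if and only if it becomes exact after tensoring with $A'$, and a module $N$ is zero if and only if $N\otimes_A A'=0$.

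For (i), I take an injection $N_1\to N_2$ of $A$-modules. I then identify
\begin{align*}
(N_1\otimes_A M)\otimes_A A'\cong (N_1\otimes_A A')\otimes_{A'}M',
\end{align*}
and similarly for $N_2$. The map $N_1\otimes_A A'\to N_2\otimes_A A'$ is injective by flatness of $A'$, and stays injective after $\otimes_{A'}M'$ by flatness of $M'$. Faithful flatness then gives that $N_1\otimes_A M\to N_2\otimes_A M$ is injective.

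For (ii), I choose generators of $M'$ of the form $m_j\otimes 1$. This is possible because each generator of $M'$ is a finite sum of elements $m\otimes a'$, so finitely many $m_j\in M$ suffice. The cokernel $Q$ of $A^n\to M$, $e_j\mapsto m_j$, satisfies $Q\otimes_A A'=0$, hence $Q=0$.

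For (iii), I first use (ii) to get a surjection $A^n\to M$ with kernel $K$. Since $M'$ is finitely presented and $A'$ is flat, the kernel of $A'^n\to M'$ is $K\otimes_A A'$, and it is finitely generated. Applying (ii) to $K$ shows $K$ is finitely generated.

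For (v), which is the hard part, the finite-type case follows from (i) and (iii), because a finitely presented flat module is projective. The general case is the Raynaud–Gruson theorem that projectivity descends along faithfully flat maps. I would not reprove that and would simply cite Stacks Theorem 10.95.6. That is the genuine obstacle: one needs Kaplansky's dévissage into countably generated pieces together with the Mittag-Leffler characterisation of projectivity, and descent of the Mittag-Leffler property.

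For (iv), since $M'$ is locally free it is flat, so $M$ is flat by (i). If the rank is $n$ finite, then $M'$ is finitely presented, so $M$ is finitely presented by (iii). A finitely presented flat module is locally free. Its rank at $\mathfrak p\in\spec A$ can be read off at any prime of $A'$ above $\mathfrak p$, and such a prime exists by surjectivity of $\spec A'\to\spec A$, so the rank is $n$.

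In the application to follow, $E\to C$ and $E\to B$ are faithfully flat maps of VNR rings. There, finitely generated flat modules are projective, so only the finite-type case of (v) will actually be needed.
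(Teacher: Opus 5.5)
Your proposal is correct and matches the paper. The paper gives no argument of its own and simply defers to G\"ortz--Wedhorn and Stacks Theorem 10.95.6; your sketches of (i)--(iv) and of the finite-type case of (v) are the standard descent arguments behind those citations, and for the general case of (v) you defer to the same Raynaud--Gruson result the paper cites.

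One caution about your closing aside, which is not part of the proof. Over a VNR ring every module is flat, so ``finitely generated flat implies projective'' would make every finitely generated module projective, which fails (e.g.\ $R/I$ with $R=\prod_{\mathbb{N}}k$, $I=\bigoplus_{\mathbb{N}}k$). Also, $B$ is only assumed to have VNR total ring of fractions, not to be VNR itself. The correct reason that only the finite-type case of (v) is needed is that in the application $M'\cong B\otimes_F V$ (via $\alpha_V$) is finite free, so (i) and (iii) already make $\bD_B(V)$ finitely presented and flat, hence projective.
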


\begin{proposition}[\cite{Good},Proposition 3.10]\label{Projective module over a VNR ring}
    Let $M$ and $N$ be finitely generated projective modules over a VNR $A$, then:
    \begin{enumerate}
        \item[(i)] $M$ is a submodule of $N$ if and only if $\dim_{A/\mathfrak{p}}(M/{\mathfrak{p}}M)\leq \dim_{A/\mathfrak{p}}(N/\mathfrak{p}N)$ for all $\mathfrak{p}\in\spec A$.
        \item[(ii)] $M$ is isomorphic to $N$ if and only if $\dim_{A/\mathfrak{p}}(M/{\mathfrak{p}}M)= \dim_{A/\mathfrak{p}}(N/\mathfrak{p}N)$ for all $\mathfrak{p}\in\spec A$.
    \end{enumerate}
\end{proposition}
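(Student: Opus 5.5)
The plan is to reduce both statements to a normal form for finitely generated projective modules over a VNR ring $A$. The form we aim for: there is a finite family of orthogonal idempotents $f_1,\dots,f_k$ with $\sum_j f_j=1$ such that
\begin{align*}
M\simeq\bigoplus_{j=1}^k (f_jA)^{r_j},\qquad N\simeq\bigoplus_{j=1}^k (f_jA)^{s_j}.
\end{align*}
Here $r_j$ (resp.\ $s_j$) is the common value of $\dim_{A/\mathfrak{p}}(M/\mathfrak{p}M)$ (resp.\ of $N$) for $\mathfrak{p}\in V(1-f_j)=D(f_j)$. We use the basic VNR facts recalled earlier. Every localization $A_\mathfrak{p}$ is a field, and $A_\mathfrak{p}=A/\mathfrak{p}$, so $M/\mathfrak{p}M\simeq M_\mathfrak{p}$. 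Moreover, every basic open $D(g)\subset\spec A$ equals $D(e)$ for an idempotent $e$: take $e=gy$ with $y$ the pseudo-inverse from Lemma \ref{fixedpointofVNR}. Hence $\spec A$ is a compact Hausdorff space with a basis of clopen sets indexed by idempotents.

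\textbf{Step 1 (spreading out a local basis).} Fix $\mathfrak{p}$ and let $r=\dim M_\mathfrak{p}$. Choose $m_1,\dots,m_r\in M$ whose images form an $A_\mathfrak{p}$-basis of $M_\mathfrak{p}$, giving $u:A^r\to M$ with $u_\mathfrak{p}$ an isomorphism. Since $M$ is finitely presented (being finitely generated projective), both $\ker u$ and $\operatorname{coker} u$ are finitely generated. Hence they vanish on some $D(g)\ni\mathfrak{p}$, which we replace by a clopen $D(e)$. On $D(e)$ the map $u$ becomes $eA^r\simeq eM$. I expect this to be the main technical point, namely checking that the finite generation of the kernel really holds and that localizing at $g$ agrees with multiplying by $e$. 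The latter holds because $A_g=A_e=eA$ for a VNR ring.

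\textbf{Step 2 (refinement).} Do Step 1 for both $M$ and $N$ at every prime and use compactness of $\spec A$ to get finitely many clopens. Then pass to the atoms of the finite Boolean algebra of idempotents they generate. This yields orthogonal $f_j$ with $\sum f_j=1$ on which both $M$ and $N$ are free, so $M=\bigoplus f_jM$ and $N=\bigoplus f_jN$ have the displayed form, with ranks constant on each $D(f_j)$.

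\textbf{Step 3 (conclusion).} For (ii), if the dimensions agree everywhere then $r_j=s_j$ for all $j$, so $M\simeq N$ summand by summand. Conversely, an isomorphism localizes to isomorphisms of vector spaces over $A_\mathfrak{p}$. For (i), if $r_j\le s_j$ then $(f_jA)^{r_j}$ is a direct summand of $(f_jA)^{s_j}$, and summing gives an embedding $M\hookrightarrow N$ (as a direct summand, up to isomorphism). Conversely, if $M\subseteq N$, then localization is exact, so $M_\mathfrak{p}\hookrightarrow N_\mathfrak{p}$ is an injection of finite-dimensional vector spaces over the field $A_\mathfrak{p}$. This gives the inequality via $M/\mathfrak{p}M\simeq M_\mathfrak{p}$.
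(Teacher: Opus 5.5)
The paper gives no argument for this statement; it simply imports it from \cite{Good}. Your proof is correct and self-contained, reading ``$M$ is a submodule of $N$'' as ``$M$ is isomorphic to a submodule of $N$'', as you do.

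One justification in Step~1 needs repair, at exactly the point you flagged. Finite presentation of $M$ alone does not make $\ker u$ finitely generated when $u\colon A^r\to M$ is not surjective. It does hold here, for either of two reasons:
\begin{enumerate}
\item[(a)] $\operatorname{coker}u$ is finitely presented and, like every module over a VNR ring, flat, hence projective. So $u(A^r)$ is a direct summand of $M$, hence projective, and $\ker u$ is a direct summand of $A^r$.
\item[(b)] First shrink to a $D(g_1)\ni\mathfrak p$ on which $\operatorname{coker}u$ vanishes. Then use that the kernel of a surjection from a finitely generated module onto a finitely presented one is finitely generated.
\end{enumerate}

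Your route differs from the classical algebraic one. That route starts from the structure theorem that a finitely generated projective module over a regular ring is a finite direct sum of cyclic projectives $e_iA$ with $e_i$ idempotent. In the commutative case, the identity $eA\oplus fA\cong (e+f-ef)A\oplus efA$ lets one arrange $e_1\geq e_2\geq\cdots\geq e_n$, i.e.\ $e_ie_{i+1}=e_{i+1}$. Necessarily $D(e_i)=\{\mathfrak p : \dim_{A/\mathfrak p}(M/\mathfrak pM)\geq i\}$. So the rank function determines the chain, which gives (ii). For (i), the pointwise inequality of ranks says exactly that $e_i\leq e'_i$ for the two chains (padded by zeros), so each $e_iA$ is a summand of $e'_iA$.

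You argue geometrically instead. A finitely presented module with free stalk is free on a neighbourhood; the neighbourhood can be taken to be a clopen $D(e)$ with $A_e=eA$. Quasi-compactness, together with the atoms of a finite Boolean algebra of idempotents, then gives one finite clopen partition on which both modules are free.

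What your version buys: it uses only generic local-to-global commutative algebra plus the Boolean nature of $\spec A$. It also makes explicit that the rank is a locally constant function on $\spec A$. This is the same ``check at each prime, then patch'' mechanism the paper leans on later, when it passes from constant local rank to freeness of $\bD_B(V)$.

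What the algebraic route buys: a canonical normal form, the decreasing chain of idempotents, which is itself a complete isomorphism invariant. Its first step, the decomposition into principal ideals $e_iA$, also holds for noncommutative regular rings, where no spectrum-based argument is available.
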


\begin{lemma}
    Let $R$ be a reduced ring and $\mathfrak{P}\in\spec R$ be a minimal prime ideal, then $R_{\mathfrak{P}}$ and $\Frac(R/\mathfrak{P})$ are isomorphic. In particular for VNR ring $R_{\mathfrak{P}}$ and $\Frac(R/\mathfrak{P})$ are isomorphic.
\end{lemma}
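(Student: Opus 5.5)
The plan is to prove the first assertion directly, using the facts that in a reduced ring the minimal primes are exactly the primes containing no nonzero-divisors' "witnesses", and that localization at a minimal prime kills its maximal ideal. Let $\mathfrak{P}$ be a minimal prime of the reduced ring $R$.

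First I show that $R_{\mathfrak{P}}$ is a field. Since $\mathfrak{P}$ is minimal, $\spec R_{\mathfrak{P}}$ consists of the single point $\mathfrak{P}R_{\mathfrak{P}}$. Hence every element of $\mathfrak{P}R_{\mathfrak{P}}$ lies in the nilradical of $R_{\mathfrak{P}}$, this nilradical being the intersection of all primes. Localization preserves reducedness: if $(a/s)^n=0$, then $ua^n=0$ for some $u\notin\mathfrak{P}$, so $(ua)^n=0$, hence $ua=0$ and $a/s=0$. Therefore $\mathfrak{P}R_{\mathfrak{P}}=0$, and the local ring $R_{\mathfrak{P}}$, whose maximal ideal is zero, is a field. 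Concretely, each $x\in\mathfrak{P}$ is killed by some $u\notin\mathfrak{P}$.

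Next I identify this field with $\Frac(R/\mathfrak{P})$. The canonical map $R_{\mathfrak{P}}\to R_{\mathfrak{P}}/\mathfrak{P}R_{\mathfrak{P}}$ is an isomorphism because $\mathfrak{P}R_{\mathfrak{P}}=0$. On the other hand, localization is exact and commutes with quotients, so
\begin{align*}
R_{\mathfrak{P}}/\mathfrak{P}R_{\mathfrak{P}}\cong (R/\mathfrak{P})_{\mathfrak{P}}.
\end{align*}
The right-hand side is the localization of the domain $R/\mathfrak{P}$ at its zero ideal, which is $\Frac(R/\mathfrak{P})$. Composing these isomorphisms gives the claim, and the isomorphism is induced by $a/s\mapsto \bar a/\bar s$.

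For the VNR statement, a von Neumann regular ring is reduced: if $x^2=0$, write $x=x^2y$ to get $x=0$. It is also zero-dimensional, since for a prime $\mathfrak{P}$ and $x\notin\mathfrak{P}$, the relation $x(1-xy)=0$ gives $xy\equiv 1$ modulo $\mathfrak{P}$, so $R/\mathfrak{P}$ is a field. Hence every prime of a VNR ring is both minimal and maximal, and the first part applies to every $\mathfrak{P}\in\spec R$; in this case $\Frac(R/\mathfrak{P})=R/\mathfrak{P}$.

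The only step needing care is the use of minimality to force $\mathfrak{P}R_{\mathfrak{P}}$ into the nilradical, that is, the one-point spectrum argument. The rest is formal.
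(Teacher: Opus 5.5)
Your proof is correct and follows essentially the same route as the paper: the isomorphism is the same map $a/s\mapsto \bar a/\bar s$, and in both arguments the key point is that $R_{\mathfrak{P}}$ is a field, which forces the kernel to vanish. The difference is that the paper only asserts that $R_{\mathfrak{P}}$ is a field. You prove it: $\spec R_{\mathfrak{P}}$ is a single point, reducedness survives localization, so $\mathfrak{P}R_{\mathfrak{P}}=0$. You also check that VNR rings are reduced and zero-dimensional, so your write-up supplies the steps the paper leaves implicit.
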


\begin{proof}
    Let $\eta_{\mathfrak{P}}:R\to R/\mathfrak{P}$ be the canonical mapping. Then define
    \begin{align*}
        \phi_{\mathfrak{P}}:R_{\mathfrak{P}}&\to \Frac(R/\mathfrak{P})\\
                                 \frac{x}{s}&\mapsto \eta_{\mathfrak{P}}(s)^{-1}\eta_{\mathfrak{P}}(x)
    \end{align*}
    This ring homomorphism makes sense as if $s\notin R\setminus\mathfrak{P}$, then $\eta_{\mathfrak{P}}(s)\neq 0$. But $\mathfrak{P}$ is a minimal prime ideal, so this element is invertible. $\phi_{\mathfrak{P}}$ is clearly surjective. Also $R_{\mathfrak{P}}$ is a field, the map $\phi_{\mathfrak{P}}$ is an isomorphism.
\end{proof}

\begin{remark}
    Since exactness is a local property, we have for a VNR ring $R$, if 
    \begin{align*}
        0\to R^{d'}\to R^d\to R^{d''}\to 0
    \end{align*}
    is a short exact sequence if and only if $d'+d''=d$.
\end{remark}

We also need some linear algebra over commutative rings, in particular VNR rings. This has been studied extensively in \cite{William}. Let us recall some facts that are needed for our purpose:\\

Let $R$ be a commutative ring with unity and let $A=[a_{i,j}]\in M_{m\times n}(R)$. Then for each $t\in\mathbb{Z}$ we define the ideals $I_t(A)$ as follows: Set $r=\min\{m,n\}$, then 
\begin{itemize}
    \item $I_t(A)=0$ for each $t>r$,
    \item $I_t(A)=R$ for each $t\leq 0$,
    \item $I_t(A)$ is the ideal generated by determinant of $t\times t$ sub-matrix of $A$ for each $t$ satisfying $1\leq t\leq r$.
\end{itemize}
Then we have an increasing chain of ideals of $R$ as
\begin{align*}
    (0)=I_{r+1}\subseteq I_r(A)\subseteq \cdots\subseteq I_1(A)\subseteq I_0(A)=R.
\end{align*}
Computing the annihilator of the above chain, we get an increasing chain of ideals
\begin{align*}
    (0)=\Ann_R(R)\subseteq\Ann_R(I_1(A))\subseteq\Ann_R(I_2(A))\subseteq\cdots\subseteq\Ann_R(I_r(A))\subseteq\Ann_R((0))=R.
\end{align*}
Then we have the following definition:

\begin{definition}[\cite{William},Definition 4.10]
    Let $A\in M_{m\times n}(R)$. The \emph{rank} of $A$ is defined as
    \begin{align*}
        \rank(A):=\max\{t\mid \Ann_R(I_t(A))=0\}.
    \end{align*}
\end{definition}

Then we have a classical theorem due to N. McCoy regarding a system of linear equations
\begin{theorem}[\cite{William}, Theorem 5.3]\label{solution of homogeneous systems}
    Let $A\in M_{m\times n}(R)$. The homogeneous system of linear equations $AX=0$ has a non-trivial solution if and only if $\rank(A)<n$.
\end{theorem}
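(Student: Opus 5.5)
The plan is a determinantal argument in the spirit of Cramer's rule, treating the two implications separately. Throughout, $r_0=\rank(A)$, and $X=(x_1,\dots,x_n)^T\in R^n$. We assume $R\neq 0$; otherwise neither side of the equivalence is meaningful. Note that $r_0\leq\min\{m,n\}$, because $I_t(A)=0$ for $t>\min\{m,n\}$ gives $\Ann_R(I_t(A))=R\neq 0$.

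\emph{Step 1: if $\rank(A)=n$, the only solution is $X=0$.} In this case $n\leq m$ and $\Ann_R(I_n(A))=0$. Suppose $AX=0$. Take any $n\times n$ submatrix $B$ of $A$, formed by choosing $n$ rows. Then $BX=0$, since it consists of some of the equations of $AX=0$. Multiplying by the adjugate gives $\det(B)X=\operatorname{adj}(B)BX=0$. So every $x_j$ is killed by every $n\times n$ minor, hence $x_j\in\Ann_R(I_n(A))=0$. Thus $X=0$.

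\emph{Step 2: if $r_0=\rank(A)<n$, there is a nontrivial solution.} Set $r=r_0$. By maximality of $r$ we have $\Ann_R(I_{r+1}(A))\neq 0$, so we choose $c\neq 0$ with $c\,I_{r+1}(A)=0$.

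If $r=0$, then $c$ kills every entry of $A$, and $X=c\,e_1$ is a nontrivial solution.

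If $r\geq 1$, then $\Ann_R(I_r(A))=0$, so $c\,I_r(A)\neq 0$. Hence there is an $r\times r$ minor $d$, with rows $i_1,\dots,i_r$ and columns $j_1,\dots,j_r$, such that $cd\neq 0$. Since $r<n$, we can pick an extra column $j_{r+1}\notin\{j_1,\dots,j_r\}$. Now define $X$ as follows:
\begin{itemize}
\item $x_{j_k}=(-1)^{k}c\,d_k$ for $k=1,\dots,r+1$, where $d_k$ is the minor with rows $i_1,\dots,i_r$ and columns $\{j_1,\dots,j_{r+1}\}\setminus\{j_k\}$;
\item $x_j=0$ for all other $j$.
\end{itemize}
Then $x_{j_{r+1}}=\pm c\,d\neq 0$, so $X\neq 0$.

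\emph{Step 3: checking that $AX=0$.} For a row $i$, Laplace expansion along the first row shows that $\sum_j a_{ij}x_j$ equals $\pm c$ times the determinant of the $(r+1)\times(r+1)$ matrix with rows $i,i_1,\dots,i_r$ and columns $j_1,\dots,j_{r+1}$.
\begin{itemize}
\item If $i\in\{i_1,\dots,i_r\}$, this matrix has a repeated row, so its determinant vanishes.
\item Otherwise, the determinant is an element of $I_{r+1}(A)$, which is killed by $c$.
\end{itemize}
Either way the $i$-th entry of $AX$ is $0$.

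The only delicate point is the sign bookkeeping in Step 3, so that the cofactor expansion exactly reproduces the bordered minor. Everything else is formal.
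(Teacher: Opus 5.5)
The paper gives no proof of this statement; it quotes McCoy's theorem from \cite{William}. Your argument is correct and is essentially the standard proof of that theorem: the adjugate rules out nontrivial solutions when $\rank(A)=n$, and when $\rank(A)=r<n$ you take $c$ times the signed cofactors of a bordered $(r+1)\times(r+1)$ minor, with $c$ a nonzero annihilator of $I_{r+1}(A)$. The one point to make explicit is the order of columns in each $d_k$: use rows $i_1,\dots,i_r$ and columns in the listed order $j_1,\dots,\widehat{j_k},\dots,j_{r+1}$, not re-sorted increasingly, because $j_{r+1}$ need not be the largest index. With that convention, Laplace expansion gives exactly $\sum_j a_{ij}x_j=-c\det M_i$, where $M_i$ has rows $i,i_1,\dots,i_r$ and columns $j_1,\dots,j_{r+1}$.
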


\begin{lemma}\label{fixed point lemma}
    If $R$ is a VNR ring, $G$ is a group acting $R$-linearly on a module $M$, and $N$ is a finitely presented $R$-module. Then $G$ acts on $N\otimes_R M$ via the second factor, then we have
    \begin{align*}
        N\otimes_R M^G&\to (N\otimes_R M)^G\\
          b\otimes a  &\mapsto b\otimes a
    \end{align*}
    is an isomorphism. 
\end{lemma}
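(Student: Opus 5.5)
The plan is to reduce to the case of a free module of finite rank, using the fact that over a VNR ring every finitely presented module is projective. Recall that a VNR ring $R$ is absolutely flat: every $R$-module is flat. Indeed, for every $x\in R$ the principal ideal $xR$ is generated by an idempotent, so it is a direct summand of $R$, and hence every finitely generated ideal is flat. In particular $N$ is flat. Since $N$ is also finitely presented, the standard criterion (finitely presented and flat implies projective; see \cite{stacks-project}) shows that $N$ is a finitely generated projective $R$-module. We may therefore choose an $R$-module $N'$ and an isomorphism $N\oplus N'\simeq R^n$ for some $n$.

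First we treat the free case $N=R^n$. Since $G$ acts on $R^n\otimes_R M$ only through the second factor, we have $G$-equivariant identifications $R^n\otimes_R M\simeq M^n$, with the diagonal action, and $R^n\otimes_R M^G\simeq (M^G)^n$. Under these identifications the map of the lemma becomes the obvious inclusion $(M^G)^n\to (M^n)^G$. This inclusion is a bijection, because a tuple $(m_1,\dots,m_n)$ is fixed by $G$ exactly when each $m_i$ is fixed.

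Next we pass to a direct summand. Both functors $N\mapsto N\otimes_R M^G$ and $N\mapsto (N\otimes_R M)^G$ are additive. For the second functor this holds because $G$ acts trivially on $N$, so the decomposition $(N\oplus N')\otimes_R M=(N\otimes_R M)\oplus(N'\otimes_R M)$ is $G$-stable and taking invariants commutes with this finite direct sum. The comparison map is natural in $N$, so the comparison map for $R^n$ is the direct sum of the comparison maps for $N$ and $N'$. A direct sum of two homomorphisms is an isomorphism only if each summand is, so the map for $N$ is an isomorphism.

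The only step that is not formal is the projectivity of $N$, that is, the implication from VNR to absolutely flat and then to finitely generated projective. This is where both hypotheses, VNR and finite presentation, are used. Without them the argument would break down, because taking $G$-invariants is only left exact. If one prefers to avoid the projectivity criterion, an alternative is to take a presentation $R^m\xrightarrow{A}R^n\to N\to 0$ and use the VNR property to split it: the image of $A$ is finitely generated, hence a direct summand of $R^n$. This again exhibits $N$ as a direct summand of $R^n$, and the argument above then applies.
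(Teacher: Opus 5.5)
Your argument is correct, but it takes a different route from the paper.

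\textbf{The paper's route.} The paper never shows that $N$ is projective. It writes $M^G$ as the kernel of $u\colon M\to\prod_{g\in G}M$, $a\mapsto (a-g(a))_{g}$. It tensors the resulting exact sequence with $N$, which preserves exactness because every module over a VNR ring is flat. It then uses finite presentation of $N$ to identify $N\otimes_R\prod_{g}M$ with $\prod_{g}(N\otimes_R M)$. This makes $N\otimes_R M^G$ the kernel of $\id_N\otimes u$, which is exactly $(N\otimes_R M)^G$.

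\textbf{Your route.} You combine the same two inputs, flatness and finite presentation, into projectivity of $N$, and then reduce to $R^n$, where the statement is a tautology.

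\textbf{What each buys.} Your route avoids infinite products and the fact that finitely presented modules commute with them. It also shows that the VNR hypothesis enters only through projectivity of $N$, so the lemma holds over any commutative ring for finitely generated projective $N$. The paper's route keeps the two hypotheses visibly separate. Flatness alone already suffices whenever the product over $G$ can be replaced by a finite one, e.g.\ $G$ finite, or finitely generated (take the product over a generating set). Finite presentation is needed only to pass $N\otimes_R-$ through an infinite product.

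\textbf{A small fix.} Your justification of absolute flatness is slightly off. Knowing that every finitely generated ideal is flat does not imply that every module is flat; over $\mathbb{Z}$ all ideals are free, yet $\mathbb{Z}/2$ is not flat. What you need is that every finitely generated ideal $I$ of a VNR ring is generated by an idempotent, hence is a direct summand of $R$. Then $I\otimes_R X\to X$ is split injective for every module $X$, and the ideal criterion for flatness shows that every $X$, in particular $N$, is flat.
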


\begin{proof}
    Consider the map
    \begin{align*}
       u: M&\to \prod\limits_{g\in G}M\\
        a&\mapsto (a-g(a))_{g\in G}
    \end{align*}
    It's kernel if $M^G$ and hence consider the exact sequence
    \begin{align*}
        0\to M^G\to M\to\im u\to 0 
    \end{align*}
    where $\im(u)=\left\{(b_g)\in\prod\limits_{g\in G}M\;\middle|\; \text{ there exists }a\in A\text{ such that }b_g=a-g(a)\text{ for all }g\in G\right\}$. Taking tensor product with $N$, we get
    \begin{align*}
        0\to N\otimes_R M^G\to N\otimes_R M\to N\otimes_R\im u\to 0
    \end{align*}
    since every module over a VNR ring is flat. But since $N$ is finitely presented, we have $N\otimes_R\prod\limits_{g\in G}M\cong \prod\limits_{g\in G}(N\otimes_R M)$, we get the kernel is exactly the kernel of
    \begin{align*}
        \id_N\otimes u: N\otimes_R M\to \prod\limits_{g\in G}(N\otimes_R M)
    \end{align*}
    which is exactly $(N\otimes_R M)^G$ and therefore the proof.
\end{proof}

We can now turn to our main theorem in this section:

\begin{theorem}
    Assume that $B$ is $(F,G_{K,\Delta})_\Delta$-regular. Then:
    \begin{enumerate}
        \item[(i)] For any $F$-representation $V$ of $\G_{K,\Delta}$, the map $\alpha_V$ is an isomorphism if and only if $\bD_B(V)$ is a free $E$-module of rank $\dim V$, which is equivalent to $V$ being $B$-admissible.
        \item[(ii)] The full subcategory $\Rep^B_F(\G_{K,\Delta})$ is a Tannakian subcategory of $\Rep_k(\G_{K,\Delta})$.
    \end{enumerate}
\end{theorem}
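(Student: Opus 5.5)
I will follow Fontaine's proof for $(F,G)$-regular rings (\cite{Fontaine}, Theorem 2.13), replacing every field-theoretic argument over $E$ with the VNR tools collected above: faithfully flat descent (Proposition \ref{Faithfully Flat Descent of Modules}), rank counting for projectives (Proposition \ref{Projective module over a VNR ring}), McCoy's theorem (Theorem \ref{solution of homogeneous systems}), and Lemma \ref{fixed point lemma}.

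\emph{The key step} is to prove that $\alpha_V$ is always injective and that $\bD_B(V)$ is a projective $E$-module whose local ranks are at most $d=\dim_F V$.

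For injectivity I first pass to $C$. Since $C$ is faithfully flat over $E$ and $B\subset C$, it suffices to show that $C\otimes_E \bD_C(V)\to C\otimes_F V$ is injective. I will argue by minimal counterexample, as Fontaine does. Suppose $x_1,\dots,x_m\in\bD_C(V)$ are $C$-linearly dependent with $m$ minimal. Using idempotents in the VNR ring $C$, I normalize a relation $\sum c_ix_i=0$ so that $c_1$ is an idempotent $e\neq0$. Applying $g-1$ shortens the relation, so $g(c_i)e=c_ie$ for all $g$ and $i$. Hence $c_ie\in C^{\G_{K,\Delta}}=E$, and the relation descends to an $E$-relation.

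To make this rigorous I will need a version of linear independence over a VNR ring. I plan to phrase it through McCoy's rank (Theorem \ref{solution of homogeneous systems}), applied to the matrix of coordinates in a basis of $V$, together with localization at primes $\mathfrak{Q}$ of $C$, where $C_{\mathfrak{Q}}$ is a field. The standing hypothesis that $\G_{K_{s(\mathfrak{Q})}}$ acts on $C_{\mathfrak{Q}}$ compatibly with $\eta_s$ is exactly what lets me run Fontaine's field argument locally. This local-to-global passage with idempotents is the step I expect to be the main obstacle.

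With injectivity in hand, $C\otimes_E\bD_C(V)$ is a submodule of $C^d$. Finiteness of $\bD_C(V)$ over $E$ then follows by descent: the image is a submodule of a projective module over a VNR ring, hence flat, and I expect it to be finitely generated via Lemma \ref{fixed point lemma}. Proposition \ref{Faithfully Flat Descent of Modules} then gives projectivity, and Proposition \ref{Projective module over a VNR ring}(i) gives local rank at most $d$. Since $E\subset B^{\G_{K,\Delta}}\subset C^{\G_{K,\Delta}}$, the same statements hold for $\bD_B(V)$.

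\emph{Part (i).} If $V$ is $B$-admissible, a fixed basis of $B\otimes_F V$ lies in $\bD_B(V)$. Injectivity of $\alpha_V$ then forces $\bD_B(V)\cong E^d$, and $\alpha_V$ is surjective. Conversely, if $\bD_B(V)$ is free of rank $d$, I choose bases of $\bD_B(V)$ and of $V$ and let $b=\det$ of the matrix of $\alpha_V$. Then $g(b)=\det(\rho(g))^{-1}b$ with $\det(\rho(g))^{-1}\in F^\times$. I will show $b$ is a non-zero divisor by localizing at each $\mathfrak{Q}$: after base change to the field $C_{\mathfrak{Q}}$ the map becomes an injective map between spaces of equal dimension, hence an isomorphism. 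Condition (iii) of regularity then makes $b$ invertible in $B$, so $\alpha_V$ is an isomorphism. Finally, "$\alpha_V$ is an isomorphism" gives admissibility directly, since the images of an $E$-basis form a fixed $B$-basis; freeness of $\bD_B(V)$ follows by descent and rank count.

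\emph{Part (ii).} I will check closure under subobjects, quotients, $\otimes$, duals and direct sums. Tensor products and direct sums are immediate from $\alpha_V$ being an isomorphism, since $\bD_B(V\otimes W)\supseteq\bD_B(V)\otimes_E\bD_B(W)$ and ranks match. For the dual, I take the determinant character: $\det V$ is admissible, with $\bD_B(\det V)$ generated by an element $b$ as above that is invertible by (iii). Then $V^\vee\cong\wedge^{d-1}V\otimes(\det V)^{-1}$ is admissible. For a subrepresentation $0\to V'\to V\to V''\to 0$ with $V$ admissible, I apply $B\otimes_F-$ and take invariants to get $0\to\bD(V')\to\bD(V)\to\bD(V'')$. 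The local ranks satisfy $r'\le d'$ and $r''\le d''$ with $r'+r''\ge d$, so equality holds at every prime. By Proposition \ref{Projective module over a VNR ring}(ii) the modules are free of the right rank, and part (i) applies. Fiber functor and rigidity then come from the ambient category $\Rep_F(\G_{K,\Delta})$.
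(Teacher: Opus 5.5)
\textbf{Gap 1: injectivity of $\alpha_V$ is not proved.} Your non-zero-divisor argument for $b$ and your closure under subobjects and quotients both depend on it.

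The minimal-relation sketch fails as written. After normalizing $c_1$ to an idempotent $e$, in general $g(e)\neq e$, so applying $g-1$ does not remove the first coefficient, and $c_ie\in E$ does not follow. Moreover, $\bD_C(V)$ is not known to be free over $E$, so you cannot choose the $x_i$ to be $E$-linearly independent. Minimality (every shorter relation is zero in $C\otimes_E\bD_C(V)$) therefore says nothing about the coefficients.

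The localization route you mention is the paper's route, but it needs two inputs you do not supply. Put $M=C\otimes_F V$ and $\mathfrak{p}=\mathfrak{Q}\cap E$.

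First, you need $\bD_C(V)\cap\mathfrak{Q}M=\mathfrak{p}\,\bD_C(V)$, so that $\bD_C(V)/\mathfrak{p}\bD_C(V)$ injects into the invariants of $C_{\mathfrak{Q}}\otimes_F V$. This holds because the ideal generated by the coordinates (in an $F$-basis of $V$) of an invariant vector lying in $\mathfrak{Q}M$ is $\G_{K,\Delta}$-stable and contained in $\mathfrak{Q}$. So its idempotent generator is invariant, lies in $\mathfrak{Q}\cap E=\mathfrak{p}$, and fixes the vector.

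Second, the invariants of the field $C_{\mathfrak{Q}}$ under $\G_{K_{s(\mathfrak{Q})}}$ must be exactly $E/\mathfrak{p}$, not larger. This is what the standing hypothesis has to provide; for $\C_\Delta$ it is the paper's identification of the localized fixed ring with $\C^{\G_{K_s}}$.

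Given both, Fontaine's lemma over the field $C_{\mathfrak{Q}}$ kills the kernel of $C\otimes_E\bD_C(V)\to M$ after $-\otimes_E E/\mathfrak{p}$ and localization at each $\mathfrak{Q}\supseteq\mathfrak{p}C$. This suffices since $E_{\mathfrak{p}}=E/\mathfrak{p}$ and vanishing is local. Once injectivity is available, the rest of your argument for (i), and your tensor-product and dual arguments, go through.

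\textbf{Gap 2: finiteness of $\bD_C(V)$ and $\bD_B(V)$ is only an expectation.} You claim $\bD_C(V)$, and then $\bD_B(V)$, is finitely generated projective, but give no argument. Lemma~\ref{fixed point lemma} only commutes invariants with $N\otimes_E-$ for finitely presented $N$; it gives no finiteness. Injectivity merely places $C\otimes_E\bD_C(V)$ inside $C^d$, and over a non-noetherian VNR ring such submodules need not be finitely generated (e.g.\ $\mathfrak{P}_s\subset\C_\Delta$ for $|\Delta|>1$). So there is nothing to descend, and finiteness would not pass to the submodule $\bD_B(V)$ anyway.

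This is precisely Conjecture~\ref{conj}, which the paper leaves open. The paper's proof never uses it: it does the rank count for $0\to V'\to V\to V''\to 0$ only under the assumption that all three $\bD_B$'s are free. Your subobject/quotient step applies Proposition~\ref{Projective module over a VNR ring}(ii), which requires finitely generated projectives, so it rests on this unproved claim.

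You can bypass it without any rank bounds. The map $\alpha_{V''}$ is surjective, because $\alpha_V$ followed by $B\otimes_F V\to B\otimes_F V''$ factors through it. It is injective by the key step, so $V''$ is admissible by (i). Then $B\otimes_E\bD_B(V)\to B\otimes_E\bD_B(V'')$ is surjective, so by faithful flatness of $B$ over $E$ the sequence $0\to\bD_B(V')\to\bD_B(V)\to\bD_B(V'')\to 0$ is exact. Tensoring it with $B$ and comparing with $0\to B\otimes_F V'\to B\otimes_F V\to B\otimes_F V''\to 0$, the five lemma makes $\alpha_{V'}$ bijective. Finite generation of $\bD_B(V')$ and $\bD_B(V'')$ then comes from descent, not from an a priori bound.
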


\begin{proof}
    Suppose $\alpha_V: B\otimes_E\bD_B(V)\to B\otimes_F V$ is an isomorphism. Since $V\otimes_FB$ is a free module, it is locally free of constant rank and projective. Using Proposition \ref{Faithfully Flat Descent of Modules}, we see that $\bD_B(V)$ is also locally free of constant rank $\dim V$ and projective. Since $E$ is a VNR ring, Proposition \ref{Projective module over a VNR ring} implies that $\bD_B(V)$ is a free $E$-module of rank $\dim V$. Furthermore, by definition, $B\otimes_E\bD_B(V)$ is a trivial $B$-representation; hence, $V$ is $B$-admissible. \\

    Conversely, suppose $\bD_B(V)$ is a free $E$-module. We claim that $\alpha_V:B\otimes_E\bD_B(V)\to B\otimes_F V$ is injective. From the commutative diagram
    \[
    \begin{tikzcd}
        B\otimes_E\bD_B(V)\arrow[r,"\alpha_{V,B}"]\arrow[d]& B\otimes_F V\arrow[dd]\\
        B\otimes_E\bD_C(V)                      \arrow[d]  &             \\
        C\otimes_E\bD_C(V)\arrow[r,"\alpha_{V,C}"]& C\otimes_F V
    \end{tikzcd}
    \]
    Since the vertical arrows are injective and $C$ is the total ring of fractions of $B$, it is enough to prove that $\alpha_{V,C}$ is injective. Therefore, it suffices to assume that $B$ is a total ring of fractions. Because $B$ is faithfully flat over $E$, the induced map $\spec B\to \spec E$ is surjective; thus, every $\mathfrak{P}\in\spec E$ can be written as $\mathfrak{Q}\cap E$ for some prime ideal $\mathfrak{Q}\in\spec B$, use $E$ is a VNR ring. We have the injective map $E_{\mathfrak{P}}\to B_{\mathfrak{Q}}$, and $F$ is a closed subfield of $E_{\mathfrak{P}}$. Since $B$ is a VNR ring, the ring $B_{\mathfrak{Q}}$ is a field and hence is $(F,G_{K_{s(\mathfrak{Q})}})$-regular in sense of \cite{Fontaine}, Definition 3.9, and hence the map
    \begin{align*}
        \alpha_{V,\mathfrak{Q}}: B_{\mathfrak{Q}}\otimes_{E_{\mathfrak{P}}}\bD_{B_{\mathfrak{Q}}}(V)\to B_{\mathfrak{Q}}\otimes_F\bD_{B_{\mathfrak{Q}}}(V)
    \end{align*}
    is injective, where $\bD_{B_{\mathfrak{Q}}}(V):=(B_{\mathfrak{Q}}\otimes_F V)^{G_{K_{s(\mathfrak{Q})}}}$. Thus, it is enough to prove that the maps
    \begin{align*}
        B\otimes_E\bD_B(V)\to\prod\limits_{\mathfrak{Q}\in\spec B}(B_{\mathfrak{Q}}\otimes_{E_{\mathfrak{Q}\cap E}}\bD_{B_{\mathfrak{Q}}}(V))
    \end{align*}
    and 
    \begin{align*}
        B\otimes_F V\to \prod\limits_{\mathfrak{Q}\in\spec B}(B_{\mathfrak{P}}\otimes_F V)
    \end{align*}
    are injective. Note that $B\to\prod\limits_{\mathfrak{Q}\in\spec(B)}(B/\mathfrak{Q})\to\prod\limits_{\mathfrak{Q}\in\spec(B)}B_{\mathfrak{Q}}$ is injective. As $V$ is a finitely presented $F$-module, we have $\prod\limits_{\mathfrak{Q}\in\spec B}B_{\mathfrak{Q}}\otimes_F V=\left(\prod\limits_{\mathfrak{Q}\in\spec B}B_{\mathfrak{Q}}\right)\otimes_F V$, which shows the second map is injective.\\

    Taking $\G_{K,\Delta}$-fixed point of the second map, we get an injective map
    \begin{align*}
        \bD_B(V)\to \left(\prod\limits_{\mathfrak{Q}\in\spec B}(B_{\mathfrak{Q}}\otimes_F V)\right)^{\G_{K,\Delta}}
    \end{align*}
   But we have $\left(\prod\limits_{\mathfrak{Q}\in\spec B}B_{\mathfrak{Q}}\otimes_F V\right)^{\G_{K,\Delta}}\to \prod\limits_{\mathfrak{Q}\in\spec B}(B_{\mathfrak{Q}}\otimes_F V)^{\G_{K_{s(\mathfrak{Q})}}}$ is just the identity map, because the action is compatible with $\eta_{s(\mathfrak{Q})}:\G_{K_s}\to\G_{K,\Delta}$, the map $\bD_B(V)\to\prod\limits_{\mathfrak{Q}\in\spec{B}}\bD_{B_{\mathfrak{Q}}}(V)$ is an injective. Taking the tensor product functor $B\otimes_E-$, we get an injective map
   \begin{align*}
       B\otimes_E\bD_B(V)\to B\otimes_E\prod\limits_{\mathfrak{Q}\in\spec B}\bD_{B_{\mathfrak{Q}}}(V)\to \prod\limits_{\mathfrak{Q}\in\spec B}(B_{\mathfrak{Q}}\otimes_{E_{\mathfrak{Q}\cap E}}\bD_{B_{\mathfrak{Q}}}(V))
   \end{align*}
    because $E$ is a VNR ring and hence every module over $E$ is flat. Therefore, the map $B\otimes_E\bD_B(V)\to \prod\limits_{\mathfrak{Q}\in\spec B}\left(B_{\mathfrak{Q}}\otimes_{E_{\mathfrak{Q}\cap E}}\bD_{B_{\mathfrak{Q}}}(V)\right)$ is injective which is what we want to prove. If $\bD_B(V)$ is free, then from Theorem \ref{solution of homogeneous systems}, it follows that the determinant $b$ of $\alpha_V$ is a non-zero divisor.\\

    We will now prove that $\alpha_V$ is an isomorphism if $\rank_E\bD_B(V)=\dim_F V$, which is equivalent to $b$ being a unit in $B$. Let $\{v_1,\dots,v_d\}$ be a basis of $V$ and $\{e_1,\dots,e_d\}$ be a basis of $\bD_B(V)$. Then we have
    \begin{align*}
        e_j=\sum\limits_{i=1}^db_{ij}v_i
    \end{align*}
    such that $b=\det[b_{i,j}]$. Let $v=v_1\wedge\dots\wedge v_d\in \bigwedge^d_FV=Fv$. Then $g(v)=\eta(g)v$, where $\eta:G_{K,\Delta}\to F^\times$ is a homomorphism. Similarly, let $e=e_1\wedge\dots\wedge e_d\in\wedge^d_E\bD_B(V)$. We then have $e=bv$, and thus $e=g(e)=g(b)g(v)=g(b)\eta(g)v$. Consequently, $g(b)=\eta(g)^{-1}b$ for all $g\in G_{K,\Delta}$. By the $(F,G_{K,\Delta})_\Delta$-regularity of $B$, the element $b\in B$ is a unit.\\

   The rest of the proof is very similar to that of Theorem 3.14 of \cite{Fontaine}. Let us recall the proof for the sake of completeness. Suppose we have an exact sequence
   \begin{align}\label{exactnessI}
       0\to V'\to V\to V''\to 0
   \end{align}
   be an exact sequence of $F$-vector spaces, then we have
   \begin{align*}
       0\to B\otimes_F V'\to B\otimes_F V\to B\otimes_F V''
   \end{align*}
   an exact sequence of $B$-modules. Taking $\G_{K,\Delta}$-fixed points we get an exact sequence
   \begin{align}\label{exactnessII}
       0\to \bD_B(V')\to \bD_B(V)\to \bD_B(V'')
   \end{align}
   of $E$-modules. These modules may not be free, but if all of $\bD_B(V'),\bD_B(V)$ and $\bD_B(V'')$ are free and $V$ is admissible, we have by the first part $\rank_E\bD_B(V')\leq \dim_FV, \rank_E\bD_B(V'')\leq\dim_FV''$ and $\rank_E\bD_B(V)=\dim_F V$. By (\ref{exactnessI}), we get 
   \begin{align*}
       \rank_E\bD_B(V)=\dim_FV=\dim_FV'+\dim_FV''\geq \rank_E\bD_B(V')+\rank_E\bD_B(V'')\geq \rank_E\bD_B(V).
   \end{align*}
   Therefore, we have $\rank_E\bD_B(V')=\dim_FV'$ and $\rank_E\bD_B(V'')=\dim_F(V'')$. But $E$ is a VNR ring, therefore the sequence (\ref{exactnessII}) is also exact. In particular, $\bD_B(-)$ is an exact functor when restricted to $\Rep^B_F(\G_{K,\Delta})$.\\

       We now prove that $\Rep^B_F(\G_{K,\Delta})$ is closed under tensor products. Let $V_1$ and $V_2$ be two $B$-admissible representations of $\G_{K,\Delta}$. Then we have the isomorphisms:
    \begin{align*}
        B\otimes_E\bD_B(V_1) &\cong B\otimes_F V_1 \\
        B\otimes_E\bD_B(V_2) &\cong B\otimes_F V_2
    \end{align*}
    It follows that:
    \begin{align*}
        \bD_B(V_1\otimes_F V_2) &= (B\otimes_F (V_1\otimes_F V_2))^{\G_{K,\Delta}} 
                               = ((B\otimes_F V_1)\otimes_B (B\otimes_F V_2))^{\G_{K,\Delta}} \\
                               &= ((B\otimes_E\bD_B(V_1))\otimes_B (B\otimes_E\bD_B(V_2)))^{\G_{K,\Delta}}
                               = (B\otimes_E\bD_B(V_1)\otimes_E\bD_B(V_2))^{\G_{K,\Delta}}\\
                               &=B^{\G_{K,\Delta}}\otimes_E\bD_B(V_1)\otimes_E\bD_B(V_2)
                               =\bD_B(V_1)\otimes_E\bD_B(V_2)
    \end{align*}
    Here we have used that $\bD_B(V_1)\otimes_E\bD_B(V_2)$ is free and hence finitely presented, and that $E$ is a VNR ring in Lemma \ref{fixed point lemma}. Since $\bD_B(V_1)$ and $\bD_B(V_2)$ are free $E$-modules, their tensor product $\bD_B(V_1\otimes_F V_2)$ is also a free $E$-module. Moreover, because $\rank_E\bD_B(V_1\otimes_F V_2) = \dim_F(V_1)\dim_F(V_2) = \dim_F(V_1\otimes_F V_2)$, the tensor product of two $B$-admissible representations is $B$-admissible. The unit and duals of $B$-admissible representations can be shown to be $B$-admissible exactly as in \cite{Fontaine}, which shows that $\Rep^B_F(\G_{K,\Delta})$ forms a Tannakian subcategory of $\Rep_F(\G_{K,\Delta})$.
\end{proof}

\begin{remark}
    This theorem also answers affirmatively the question posed at the end of Section 4 about the freeness of $\mbDdR(V)$ in \cite{BCM}, with the modified de Rham period rings. We also showed that $\alpha_V$ is always injective, regardless of whether $\bD_B(V)$ is free. However, the important point to note is that the period ring defined in \cite{BCM} does not satisfy all the conditions of $(F,\G_{K,\Delta})_{\Delta}$-regularity, whereas we will see that our reformulation satisfies these conditions.
\end{remark}

\begin{conjecture}\label{conj}
    $\bD_B(V)$ is finitely presented.
\end{conjecture}

If \ref{conj} is true, then $\bD_B(V)$ is a projective $E$-module and hence $\bD_B(V)$ is free if and only if $\dim_{\mathfrak{Q}}\bD_{B_\mathfrak{Q}}(V)$ is constant for all $\mathfrak{Q}\in\spmin B$.


\section{Hodge-Tate and de Rham Representations}

We start this section by proving the classical Ax-Sen-Tate theorem in multivariate setups. For that, let us prove a lemma that is needed for the proof.

\begin{lemma}\label{Fixed ring lemma}
    Let $K_1,\cdots,K_t\subset \C_\Delta$ are fields over a p-adic field $K_0$ which may not be finite over $\mathbb{Q}_p$ and let $\G_{K,\Delta}=\G_{K_1}\times\cdots\times\G_{K_t}$ be the direct product of the absolute Galois groups. Then the completion of $(\C\otimes_{K_0}\cdots\otimes_{K_0}\C)^{\G_{K,\Delta}}$ and $\C^{\G_{K,\Delta}}_\Delta$ are isomorphic with respect to the semi-norm $|\bullet|=\sup\limits_{s\in\mathfrak{S}}|\beta_s(\bullet)|$.
\end{lemma}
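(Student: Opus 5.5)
We regard $A:=\C\otimes_{K_0}\cdots\otimes_{K_0}\C$ as a dense subring of $\C_\Delta$ and prove that $A^{\G_{K,\Delta}}$ is dense in $\C^{\G_{K,\Delta}}_\Delta$ for the semi-norm $|\bullet|=\sup_{s\in\mathfrak{S}}|\beta_s(\bullet)|$. The topology on $\C_\Delta$ is the one induced by this power-multiplicative semi-norm. Also $\C^{\G_{K,\Delta}}_\Delta$ is closed in $\C_\Delta$, since each $g$ acts continuously. Hence the completion of $A^{\G_{K,\Delta}}$ is identified with its closure inside $\C_\Delta$, and this closure lies in $\C^{\G_{K,\Delta}}_\Delta$. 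The isomorphism is the map induced by the inclusion, so the only content is density.

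The key tool is an averaging (Tate-type trace) operator. Since $\G_{K,\Delta}$ is profinite, for any open normal subgroup $H=H_1\times\cdots\times H_t$ we fix finite extensions $L_\alpha/K_\alpha$ with $\G_{L_\alpha}=H_\alpha$. Then the normalized trace
\begin{align*}
R_H=\frac{1}{[\G_{K,\Delta}:H]}\sum_{g\in \G_{K,\Delta}/H} g
\end{align*}
sends $\C_\Delta^{H}$ onto $\C_\Delta^{\G_{K,\Delta}}$ and restricts to the identity on $\C_\Delta^{\G_{K,\Delta}}$. It is enough to treat one factor at a time, because $R_H$ is the composite of the single-factor traces $R_{H_\alpha}$. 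Each of these is bounded by $|p|^{-c_\alpha}$ with respect to $|\bullet|$, by the classical estimate on $\mathrm{Tr}_{L_\alpha/K_\alpha}$ and Tate's normalized traces.

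For the density argument, let $x\in\C^{\G_{K,\Delta}}_\Delta$ and $\varepsilon>0$. First choose $y\in \overline{K}_1\otimes_{K_0}\cdots\otimes_{K_0}\overline{K}_t\subset A$ with $|x-y|<\varepsilon$. This is possible because $\overline{K}_\alpha$ is dense in $\C$, and so the algebraic tensor product is dense in $\C_\Delta$. The element $y$ is a finite sum of pure tensors, so it is fixed by some open subgroup $H=\prod_\alpha\G_{L_\alpha}$ with $L_\alpha/K_\alpha$ finite. Then $R_H(y)\in A^{\G_{K,\Delta}}$ and $R_H(x)=x$, which gives
\begin{align*}
|x-R_H(y)|=|R_H(x-y)|\le \|R_H\|\,\varepsilon .
\end{align*}

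\textbf{Main obstacle.} The difficulty is that $\|R_H\|$ depends on $H$, and $H$ depends on $\varepsilon$, so a naive finite trace can blow up as $\varepsilon\to 0$, especially when the $K_\alpha$ are not finite over $\mathbb{Q}_p$. To get around this, I would replace the finite averages by Tate's normalized traces along a deeply ramified tower, for instance the cyclotomic one. These have norm bounded uniformly in the level, using the classical Ax–Sen–Tate estimates. I would transport those bounds to $\C_\Delta$ through each $\beta_s$ via the compatibility $\beta_s(\eta_s(g)x)=g\beta_s(x)$, then take the supremum over $s\in\mathfrak{S}$. This is where the uniform constant has to be checked. Once $\|R_H\|\le C$ is known independently of $H$, density follows, and so does the isomorphism of completions.
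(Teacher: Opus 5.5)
\paragraph{Where the proof breaks.} Your overall frame matches the paper's proof. $\C^{\G_{K,\Delta}}_\Delta$ is closed in $\C_\Delta$ (the paper phrases this as completeness, using that each $g$ acts isometrically), so the completion of $A^{\G_{K,\Delta}}$ is its closure, and everything reduces to density of $A^{\G_{K,\Delta}}$ in $\C^{\G_{K,\Delta}}_\Delta$. The paper asserts this density without argument. You try to prove it by approximating and then averaging, and that step fails.

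The uniform bound $\|R_H\|\le C$ that you need is not just unchecked: it is false for $|\bullet|=\sup_s|\beta_s(\bullet)|$. So is your intermediate claim that each single-factor trace obeys the classical estimate for $\mathrm{Tr}_{L_\alpha/K_\alpha}$. Here is a counterexample.
\begin{itemize}
\item Take $t=2$, $K_0=K_1=K_2=\mathbb{Q}_p$, $L/\mathbb{Q}_p$ finite Galois of degree $n$ with group $\Gamma$, and $H=\G_L\times\G_L$.
\item Then $L\otimes_{\mathbb{Q}_p}L\cong\prod_{\sigma\in\Gamma}L$ via $\mathrm{pr}_\sigma(a\otimes b)=a\sigma(b)$.
\item For $s\in\mathfrak{S}$ one has $\beta_s=s_1\circ\mathrm{pr}_{\sigma_s}$ on $L\otimes L$, where $\sigma_s=(s_1^{-1}s_2)|_L$.
\item Hence the idempotent $e_{\mathrm{id}}$ of the $\sigma=\mathrm{id}$ factor has $|e_{\mathrm{id}}|=1$.
\item A direct check gives $\mathrm{pr}_\sigma((g_1,g_2)z)=g_1(\mathrm{pr}_{g_1^{-1}\sigma g_2}(z))$, so $(g_1,g_2)e_{\mathrm{id}}=e_{g_1g_2^{-1}}$ and
\[
R_H(e_{\mathrm{id}})=\frac{1}{n^2}\sum_{(g_1,g_2)\in\Gamma^2}e_{g_1g_2^{-1}}=\frac{1}{n}.
\]
\item The second-factor trace alone also sends $e_{\mathrm{id}}$ to $\frac1n$.
\end{itemize}
For $L=\mathbb{Q}_p(\mu_{p^k})$ this element has norm $p^{k-1}\to\infty$. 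Tate's normalized traces along the cyclotomic tower do not rescue the argument either. Averaging over $\mathrm{Gal}(\mathbb{Q}_p(\mu_{p^m})/\mathbb{Q}_p(\mu_{p^k}))$ sends $e_{\mathrm{id}}$ to $p^{-(m-k)}$ times an idempotent, which has norm $p^{m-k}$, unbounded in $m$.

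\paragraph{Why it breaks, and a route that avoids traces.} The underlying reason is that $\G_{K,\Delta}$ permutes the points of $\spec\C_\Delta=\mathfrak{S}$: indeed $\beta_s(gx)=\beta_{s\circ g}(x)$ with $(s\circ g)_\alpha=s_\alpha\circ g_\alpha$. So $\C^H_\Delta$ is partly split over $\C^{\G_{K,\Delta}}_\Delta$, essentially a product of fields, and there is no different to offset the factor $1/[\G_{K,\Delta}:H]$. Tate's estimate concerns field extensions and does not transfer.

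In the example the lemma itself holds. $\G_{K,\Delta}$ acts transitively on $\mathfrak{S}$, so $\C^{\G_{K,\Delta}}_\Delta=\mathbb{Q}_p=A^{\G_{K,\Delta}}$. It is the averaging method that fails, not the statement.

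You could instead argue pointwise. For $x\in\C^{\G_{K,\Delta}}_\Delta$, the value $\beta_s(x)$ is constant on $\G_{K,\Delta}$-orbits in $\mathfrak{S}$ and lies in $\C^{\G_{K_s}}$. Using $\bigcap_s\mathfrak{P}_s=0$, $x$ is then determined by these orbit-wise values. When the $K_\alpha$ are finite over $K_0$, the orbit space is finite and the values lie in the composita $K_s$. This should give $\C^{\G_{K,\Delta}}_\Delta=K_1\otimes_{K_0}\cdots\otimes_{K_0}K_t=A^{\G_{K,\Delta}}$ directly. In the general case, density becomes an approximation problem over the profinite orbit space rather than a question about norms of traces.
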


\begin{proof}
    Note that $\C^{\G_{K,\Delta}}_\Delta$ is complete with respect to $|\bullet|$. Indeed, if $(x_n)_{n\in\mathbb{N}}$ is a Cauchy sequence in $\C^{G_{K,\Delta}}_\Delta$ which converges to $x\in\C_\Delta$, then for every $\epsilon>0$, there exists $N\in\mathbb{N}$ such that for all $g\in\G_{K,\Delta}$ we have $|x_n-g(x)|=|g(x_n)-g(x)|=|x_n-x|<\epsilon$. This is true for all $\epsilon>0$, we have $g(x)=x$ and hence $x\in\C^{\G_{K,\Delta}}_\Delta$. Also, $\C^{\G_{K,\Delta}}_\Delta\cap(\C\otimes_{K_0}\cdots\otimes_{K_0}\C)=(\C\otimes_{K_0}\cdots\otimes_{K_0}\C)^{\G_{K,\Delta}}$ is dense in $\C^{\G_{K,\Delta}}_\Delta$. Thus its completion must be equal to $\C^{\G_{K,\Delta}}_\Delta$.
\end{proof}

As a consequence, we obtain the Ax-Sen-Tate Theorem (Proposition 4.46 of \cite{Fontaine}) in the multivariate case

\begin{proposition}\label{Multivariate Tate-Sen Theorem}
    Set up as in Lemma \ref{Fixed ring lemma}, we have
    \begin{align*}
        \C_{\Delta}(i)^{\G_{K,\Delta}}=
        \begin{cases}
            K_\Delta & \text{ if } i=0\\
            0 & \text{ if } i\neq 0
        \end{cases}
    \end{align*}
    where $K_\Delta=K_1\widehat{\otimes}_{K_0}\cdots\widehat{\otimes}_{K_0}K_t$ is the completed tensor product, which equals the completion of the usual tensor product with respect to the seminorm $|\bullet|$.
\end{proposition}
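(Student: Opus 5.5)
The plan is to reduce to the classical Ax--Sen--Tate theorem (Proposition 4.46 of \cite{Fontaine}) one factor at a time, using Lemma \ref{Fixed ring lemma} to pass between the algebraic and the completed tensor products. Write $\C_\Delta(i)=\C_\Delta u^i$ with $g\cdot(cu^i)=g(c)\chi_\Delta(g)^iu^i$. Since $\chi_\Delta=\prod_\alpha\chi_\alpha$ and each $\chi_\alpha$ factors through the projection $\G_{K,\Delta}\to\G_{K_\alpha}$, the twisted action on $\C_\Delta$ is the completed tensor product of the twisted actions of $\G_{K_\alpha}$ on the factors $\C(i)$. Hence $\C_\Delta(i)^{\G_{K,\Delta}}$ is the set of $x\in\C_\Delta$ fixed by the semilinear action $x\mapsto\chi_\Delta(g)^ig(x)$.

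First I treat the uncompleted ring $\C\otimes_{K_0}\cdots\otimes_{K_0}\C$. Choose a $K_0$-basis $\{e_j\}$ of $\C\otimes\cdots\otimes\C$ in the last $t-1$ factors, so every element is uniquely $\sum_j x_j\otimes e_j$ with finitely many nonzero $x_j\in\C$. Invariance under $\G_{K_1}\times\{1\}\times\cdots\times\{1\}$ gives $\chi_1(g)^ig(x_j)=x_j$ coordinatewise, so by classical Ax--Sen--Tate each $x_j\in K_1$ when $i=0$ and $x_j=0$ when $i\neq0$. For $i\neq0$ this already gives zero. For $i=0$ we get $K_1\otimes\C\otimes\cdots\otimes\C$, and we repeat the argument with a $K_0$-basis of $K_1$ for the next factor. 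By induction the invariants are $K_1\otimes_{K_0}\cdots\otimes_{K_0}K_t$. (Alternatively, for $i\neq0$ one can pick $g$ in a single factor with $\chi_\alpha(g)^i\ne1$ acting trivially on the others, which is cleaner.)

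Next I pass to completions. For $i=0$, Lemma \ref{Fixed ring lemma} says $\C_\Delta^{\G_{K,\Delta}}$ is the completion of $(\C\otimes\cdots\otimes\C)^{\G_{K,\Delta}}=K_1\otimes\cdots\otimes K_t$ for $|\bullet|$, which is $K_\Delta$ by definition. For $i\ne0$ I need the twisted analogue of the density statement. I would use the Tate-style operator: choose a factor $\alpha$ and $g$ in $\G_{K_\alpha}$ with $\chi_\alpha(g)^i\ne 1$. Given an invariant $x$ in the twisted action, approximate it by $y$ in the algebraic tensor product with $|x-y|<\epsilon$. Then $|(\chi(g)^ig-1)y|<\epsilon$, since $g$ is an isometry for $|\bullet|=\sup_s|\beta_s(\bullet)|$ (it permutes $\mathfrak S$).

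The main obstacle is the lack of a uniform inverse of $\chi(g)^ig-1$ on the algebraic tensor product, i.e.\ a bound $|y|\le c|(\chi(g)^ig-1)y|$ up to an invariant part. I would try to obtain it from classical Tate--Sen estimates (normalized traces on $\C^{H}$ for the cyclotomic tower) applied in the $\alpha$-th factor. Since $|\bullet|$ is a sup over $\beta_s$, and each $\beta_s$ intertwines the action via $\eta_s$, one can apply the classical estimate uniformly in $s$. This yields $|x|\le c'\epsilon$ for all $\epsilon$, so $x=0$. The density in Lemma \ref{Fixed ring lemma} is used only implicitly there, so the justification of the uniform constant is where I expect the real work to lie.
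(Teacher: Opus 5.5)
Your $i=0$ argument is essentially the paper's. You compute the invariants of the algebraic tensor product one factor at a time (you via coordinates in a $K_0$-basis, the paper via Lemma~\ref{fixed point lemma} and Fontaine's Proposition~4.9), and then pass to $\C_\Delta$ with Lemma~\ref{Fixed ring lemma}.

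The genuine gap is the case $i\neq 0$. Your approximation scheme needs a uniform bound of the shape $|y|\le c\,|(\chi_\Delta(g)^i g-1)y|$ on algebraic tensors. You leave this unproved, and the classical theory does not supply it in that form. Tate's estimates concern $\Gamma$ acting on $\widehat{K}_\infty=\C^{H}$ after splitting off normalized traces, not a single $g$ acting on all of $\C$, and none of that structure has been built inside $\C_\Delta$. As written, the proposal does not establish the vanishing for $i\neq 0$.

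The missing idea is that no approximation is needed. You can argue pointwise on the completed ring, which is what the paper's ``localisation at $\mathfrak{P}_s$'' amounts to.
\begin{itemize}
\item Let $x\in\C_\Delta$ satisfy $g(x)=\chi_\Delta(g)^{-i}x$ for all $g\in\G_{K,\Delta}$.
\item For $s\in\mathfrak{S}$ and $h\in\G_{K_s}$, use the intertwining relation $h(\beta_s(x))=\beta_s(\eta_s(h)x)$, which you already invoke. It gives $h(\beta_s(x))=\chi_\Delta(\eta_s(h))^{-i}\beta_s(x)$.
\item Here $\chi_\Delta\circ\eta_s$ is a nontrivial power of the cyclotomic character $\chi$ of $\G_{K_s}$. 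Unwinding the definitions, $\chi_\Delta(\eta_s(h))=\prod_{\alpha}\chi(s_\alpha^{-1}hs_\alpha)=\chi(h)^{t}$.
\item Thus $\beta_s(x)\in\C(ti)^{\G_{K_s}}$. Since $ti\neq 0$ and $K_s$ is finite over $\mathbb{Q}_p$, this is $0$ by Tate's classical theorem.
\item This holds for every $s$, so $x\in\bigcap_{s\in\mathfrak{S}}\mathfrak{P}_s=0$, because $\C_\Delta$ is reduced and its primes are exactly the $\mathfrak{P}_s$.
\end{itemize}
This bypasses the density and uniformity issue you flagged: you only needed to apply $\beta_s$ to $x$ itself rather than to approximants.

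Both this step and your factor-by-factor computation need the $K_\alpha$ to be finite over $K_0$, as the paper assumes later. In the full generality of Lemma~\ref{Fixed ring lemma} the $i\neq 0$ claim is false. For example, if every $K_\alpha$ contains $\mathbb{Q}_p(\mu_{p^\infty})$, then $\chi_\Delta$ is trivial on $\G_{K,\Delta}$.
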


\begin{proof}
   First, suppose that $i=0$, then we will first show that $(\C\otimes_{K_0}\cdots{\otimes}_{K_0}\C)^{\G_{K,\Delta}}=K_1\otimes_{K_0}\cdots\otimes_{K_0}K_t$. First, we will show for $t=2$. From Lemma \ref{fixed point lemma} and \cite{Fontaine} Proposition 4.9 we get
   \begin{align*}
       (\C\otimes_{K_0}\C)^{\G_{K_1}\times\G_{K_2}}&=\left((\C\otimes_{K_0}\C)^{\G_{K_1}}\right)^{\G_{K_2}}\\
                        &=(\C^{\G_{K_1}}\otimes_{K}\C)^{\G_{K_2}}\\
                        &=\C^{K_1}\otimes_{K_0}\C^{K_2}=\hat{K}_1\otimes_{K_0}\hat{K}_2
   \end{align*}
   From induction, we get
   \begin{align*}
       (\C\otimes_{K_0}\cdots\otimes_{K_0}\C)^{\G_{K,\Delta}}=\widehat{K}_1\otimes_{K_0}\cdots\otimes_{K_0}\hat{K}_t
   \end{align*}

    From Lemma \ref{Fixed ring lemma} we get $\C^{\G_{K,\Delta}}_{\Delta}=K_\Delta$.
    For $i\neq 0$, note that its localisation at a prime ideal corresponding to $s\in\mathfrak{S}$ is $\C(i)^{\G_{K_s}}$, which is zero by the classical Ax-Sen-Tate Theorem. This is because the image of $\boldsymbol{\zeta}_{\bp^\bn}$ is a system of compatible roots of unity under the map $\beta_s$, since $\C_\Delta(i)=\C_\Delta\otimes_{\mathbb{Z}_p}\mathbb{Z}_p(i)$, where $\mathbb{Z}_p(i)$ is the tate twist for $\G_{K,\Delta}$ which maps to the tate twist $\mathbb{Z}_p(i)$ for $\G_{K_s}$ for every $s\in\mathfrak{S}$. Therefore, $\C_\Delta(i)^{\G_{K,\Delta}}=0$ for $i\neq 0$.
\end{proof}

\begin{corollary}\label{Corollary of Multivariate Tate-Sen Theorem}
    With the above setup, we have
    \begin{align*}
       \widehat{\C}_\Delta(i)^{\G_{K,\Delta}}&=
                   \begin{cases}
                     \text{ closure of $K_\Delta$ inside $\widehat{\C}_\Delta$} &\text{ if } i=0\\
                     0 &\text{ if } i\neq 0
                   \end{cases}.
    \end{align*}
\end{corollary}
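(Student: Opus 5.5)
The plan is to compute the invariants of the profinite completion $\widehat{\C}_\Delta=\varprojlim_{\mathfrak{S}'}\C_\Delta/\mathfrak{a}_{\mathfrak{S}'}$, with $\mathfrak{a}_{\mathfrak{S}'}=\bigcap_{s\in\mathfrak{S}'}\mathfrak{P}_s$, by reducing them to the invariants of its residue fields. First I will check that $\G_{K,\Delta}$ acts on $\widehat{\C}_\Delta$. The action permutes the primes $\mathfrak{P}_s$, since $g\mathfrak{P}_s=\mathfrak{P}_{s\circ g^{-1}}$. Hence it permutes the finite sets $\mathfrak{S}'$ and induces a continuous action on the inverse system, and therefore on the limit. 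The twist $(i)$ is compatible with this, because $\widehat{\C}_\Delta(i)=\widehat{\C}_\Delta\otimes_{\mathbb{Z}_p}\mathbb{Z}_p(i)$ and $\mathbb{Z}_p(i)$ is free of rank one.

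Next I will use the structure of $\widehat{\C}_\Delta$ as a VNR ring whose maximal ideals $\mathfrak{M}_s$ are indexed by $\mathfrak{S}$, each with residue field $\C_\Delta/\mathfrak{P}_s\cong\C$ via $\beta_s$. This gives an injection $\widehat{\C}_\Delta\hookrightarrow\prod_{s\in\mathfrak{S}}\C$, $x\mapsto(\widehat\beta_s(x))_s$, since the intersection of the $\mathfrak{M}_s$ is zero. The map $\widehat\beta_s$ is equivariant along $\eta_s:\G_{K_s}\to\G_{K,\Delta}$, by the same computation as for $\beta_s$.

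For $i\neq 0$, take $x$ invariant. Each $\widehat\beta_s(x)$ lies in $\C(i)^{\G_{K_s}}$, using the Remark that $\chi_\Delta\circ\eta_s$ is the cyclotomic character. By classical Ax--Sen--Tate this group is $0$. Therefore every coordinate vanishes and $x=0$, exactly as in the proof of Proposition~\ref{Multivariate Tate-Sen Theorem}.

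For $i=0$, the inclusion of the closure of $K_\Delta$ into the invariants is clear. Indeed, $K_\Delta=\C_\Delta^{\G_{K,\Delta}}$ by Proposition~\ref{Multivariate Tate-Sen Theorem}, and the fixed locus is closed because the action is continuous in the profinite topology and that topology is Hausdorff. For the reverse inclusion, take $x\in\widehat{\C}_\Delta^{\G_{K,\Delta}}$ and a finite set $\mathfrak{S}'$. The aim is an element $y\in K_\Delta$ with $y\equiv x$ modulo $\mathfrak{a}_{\mathfrak{S}'}$.

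The image of $x$ in $\C_\Delta/\mathfrak{a}_{\mathfrak{S}'}\cong\prod_{s\in\mathfrak{S}'}\C$ is fixed by the stabilizer of $\mathfrak{S}'$. However, the orbit of $\mathfrak{S}'$ is generally infinite, so I will instead work one coordinate at a time. Each $\widehat\beta_s(x)$ lies in $\C^{\G_{K_s}}=K_s$, which is the image of $\beta_s$ restricted to $K_\Delta$, by the proposition identifying $(\C_\Delta^{\G_{K,\Delta}})_{\mathfrak{P}_s\cap\C^{\G_{K,\Delta}}_\Delta}$ with $\C^{\G_{K_s}}$. The primes $\mathfrak{P}_s\cap K_\Delta$ for $s\in\mathfrak{S}'$ are maximal in the VNR ring $K_\Delta$ (Lemma~\ref{fixedpointofVNR}), so the Chinese remainder theorem in $K_\Delta$ produces $y$ with $\beta_s(y)=\widehat\beta_s(x)$ for all $s\in\mathfrak{S}'$. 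Then $x-y\in\bigcap_{s\in\mathfrak{S}'}\mathfrak{M}_s$. Letting $\mathfrak{S}'$ grow, $x$ lies in the profinite closure of $K_\Delta$.

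The main obstacle is the Chinese remainder step. Distinct $s,s'\in\mathfrak{S}'$ may restrict to the same prime of $K_\Delta$. In that case I must check that the targets agree, that is, $\widehat\beta_s(x)=\widehat\beta_{s'}(x)$ under the identification of their residue fields. I plan to get this from invariance: such $s,s'$ should differ by an element of $\G_{K,\Delta}$, namely $s'=s\circ g$. Then $\widehat\beta_{s'}(x)=\widehat\beta_s(gx)=\widehat\beta_s(x)$, and this transitivity is the point I expect to need the most care.
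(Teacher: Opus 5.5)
Your argument is correct, but it takes a different and more complete route than the paper.

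\textbf{How the routes differ.} The paper's proof is one sentence: it deduces the corollary from Proposition~\ref{Multivariate Tate-Sen Theorem} ``because $\C_\Delta$ is dense in $\widehat{\C}_\Delta$''. Density alone only gives the easy inclusion of the closure of $K_\Delta$ into the invariants. It does not show that an invariant element of the completion is a limit of invariant elements of $\C_\Delta$. For $i\neq 0$, it also does not show that $\widehat{\C}_\Delta(i)^{\G_{K,\Delta}}$ vanishes.

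You supply exactly this missing step by working through the residue fields of $\widehat{\C}_\Delta$. Your injection is in fact an isomorphism $\widehat{\C}_\Delta\cong\prod_{s\in\mathfrak{S}}\C$, by CRT at each finite level. On this product $\G_{K,\Delta}$ permutes coordinates, and the stabilizer of the $s$-coordinate is $\eta_s(\G_{K_s})$. So an invariant element amounts to a $\G_{K_s}$-invariant value at one representative of each orbit, which makes both cases transparent.

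\textbf{The transitivity step.} The step you flag as the crux does hold. Continuous $K_0$-embeddings $\C\to\C$ are exactly the elements of $\G_{K_0}$. Suppose $\mathfrak{P}_s\cap K_\Delta=\mathfrak{P}_{s'}\cap K_\Delta$. Then the two $K_0$-embeddings into $\overline{K}_0$ of the residue field of $K_1\otimes_{K_0}\cdots\otimes_{K_0}K_t$ at this prime differ by some $u\in\G_{K_0}$. Hence $s'_\alpha|_{K_\alpha}=u\circ s_\alpha|_{K_\alpha}$ for all $\alpha$, so $g_\alpha:=s_\alpha^{-1}u^{-1}s'_\alpha\in\G_{K_\alpha}$ and $s'\sim s\circ g$. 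The CRT condition $\beta_{s'}(y)=\widehat\beta_{s'}(x)$ does not depend on the representative, since $\beta_{u\circ s}=u\circ\beta_s$. Taking $s\circ g$ itself as the representative, both targets coincide: $\widehat\beta_{s\circ g}(x)=\widehat\beta_s(gx)=\widehat\beta_s(x)$ and $\beta_{s\circ g}(y)=\beta_s(y)$ for $y\in K_\Delta$.

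\textbf{What your route gives in addition.} When the $K_\alpha/K_0$ are finite, there are only finitely many orbits (double cosets of $\prod_\alpha\G_{K_\alpha}$). So the invariants are $K_\Delta$ itself, consistent with the paper's closing remark.

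\textbf{Two harmless slips.}
\begin{enumerate}
\item[(i)] The action on $\widehat{\C}_\Delta$ is by homeomorphisms, but it is not jointly continuous for $t\geq 2$, because the stabilizers $\eta_s(\G_{K_s})$ are not open. You only need each $g$ to be continuous to get a closed fixed locus, so this does not matter.
\item[(ii)] $\chi_\Delta\circ\eta_s$ is the $t$-th power of the cyclotomic character, not the character itself. This still gives $\C(ti)^{\G_{K_s}}=0$ for $i\neq 0$.
\end{enumerate}
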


\begin{proof}
    The result is easy because $\C_\Delta$ is dense in $\widehat{\C}_\Delta$. Note that if $K_1,\cdots,K_t$ are finite extensions of $K_0$, then the closure of $K_\Delta$ is equal to $K_\Delta$.
\end{proof}

Now we will prove the multivariate period rings $\mBHT$ and $\mBdr$ are $(\mathbb{Q}_p,\G_{K,\Delta})$-regular. From now on, we will assume $K_1,\cdots,K_t$ are finite extensions of $K_0$.

\begin{proposition}
    The rings $\mBHT$ and $\mBdr$ are $(\mathbb{Q}_p,\G_{K,\Delta})_\Delta$-regular.
\end{proposition}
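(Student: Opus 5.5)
The plan is to check the three conditions of $(\mathbb{Q}_p,\G_{K,\Delta})_\Delta$-regularity separately for $B=\mBHT$ and $B=\mBdr$, using the profinite structure to reduce each condition to its classical counterpart at a single $s\in\mathfrak{S}$. Reducedness comes first. Both rings embed into a profinite completion $\varprojlim R/\mathfrak{a}_{\mathfrak{S}'}$ in which each $R/\mathfrak{a}_{\mathfrak{S}'}$ is a finite product of fields. These fields are $\Bdr$ for $\mBdr$ and $\C(\!(t)\!)$ for $\mCHT$. Reducedness is therefore immediate.

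For condition (i) I take $C$ to be the total ring of fractions of $B$. For $\mBdr$, which is already VNR, I expect $C=\mBdr$. For $\mBHT=\widehat{\C}_\Delta[t_\Delta,t_\Delta^{-1}]$, I would show that $C$ is the VNR ring sitting between $\mBHT$ and $\mCHT$, obtained by inverting the non-zero divisors. The argument runs as follows. A non-zero divisor of $\mBHT$ has nonzero image under every $\Psi_s$, so it becomes invertible in $\mCHT$. The fractions therefore live inside $\mCHT$. Any element of $\mCHT$ has a pseudo-inverse in $\mCHT$, and I would check that this pseudo-inverse already lies in the fraction ring. The idea is to write the element as an idempotent-weighted combination and use the fact that idempotents of $\widehat{\C}_\Delta$ lie in $\mBHT$.

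For condition (ii), Lemma~\ref{fixedpointofVNR} gives that $C^{\G_{K,\Delta}}$ is VNR. I would compute the invariants by degree. Corollary~\ref{Corollary of Multivariate Tate-Sen Theorem} gives $\mBHT^{\G_{K,\Delta}}=K_\Delta$, because the $i\neq0$ pieces vanish. For $\mBdr$ I would run the usual filtration induction: an invariant in $\Fil^i$ has image in $\gr^i=\widehat{\C}_\Delta(i)$, which has no invariants for $i\neq0$. Together with the separatedness $\bigcap_i\Fil^i_\Delta\mBdr=\{0\}$, this yields $\mBdr^{\G_{K,\Delta}}=K_\Delta$. I then need $C^{\G_{K,\Delta}}=K_\Delta$ as well. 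Here I would localize at each $\mathfrak{Q}$, which corresponds to some $s$ with $C_{\mathfrak{Q}}\cong\Bdr$ or the fraction field of $\BHT$. The classical facts $\Bdr^{\G_{K_s}}=K_s$ and $\mathrm{Frac}(\BHT)^{\G_{K_s}}=K_s$ then hold prime by prime, and I would glue them using the embedding into $\prod_s C_{\mathfrak{Q}_s}$. Faithful flatness of $C$ over $E=K_\Delta$ is automatic for flatness, since $E$ is VNR by Proposition~\ref{Multivariate Tate-Sen Theorem}. For the faithful part, it suffices that $\spec C\to\spec E$ is surjective. Primes of $K_\Delta$ are the kernels of $\beta_s|_{K_\Delta}$, and each of these is the contraction of $\ker\Phi_s$ or $\ker\Psi_s$.

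Condition (iii) is the main obstacle. Let $b$ be a non-zero divisor with $g(b)=\lambda(g)b$. I would argue locally: for each $s$, $\Phi_s(b)$ (respectively $\Psi_s(b)$) is a nonzero element of $\Bdr$ (respectively $\BHT$) on which $\G_{K_s}$ acts through the character $\lambda\circ\eta_s$. The classical argument (\cite{Fontaine}) then forces $\Phi_s(b)$ to be a unit times a power $t^{n_s}$, so $b$ is locally invertible. The difficulty is to ensure that the inverse lies in $B$ and not merely in the profinite product, because the exponents $n_s$ must be uniformly bounded. For $\mBHT$ I would first reduce to $b$ homogeneous. For this I decompose $b=\sum_i b_it_\Delta^i$ and use the fact that each $b_i$ satisfies $g(b_i)=\lambda(g)\chi_\Delta(g)^{-i}b_i$. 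I would then use Proposition~\ref{Multivariate Tate-Sen Theorem} applied to twists to show that only one degree $i_0$ survives and that $b_{i_0}$ is a scalar multiple of an invertible element of $K_\Delta$. For $\mBdr$ I would apply the analogous argument to the leading term of $b$ in $\gr\mBdr=\mBHT$ by Proposition~\ref{de Rham Hodge Tate Proposition}, then lift invertibility through the complete filtration. If uniform boundedness fails, the fallback is to note that an element of $\mBdr$ that is invertible at every $\ker\Phi_s$ is a unit, since $\mBdr$ is by construction a profinite completion whose maximal ideals are exactly the $\ker\Phi_s$.
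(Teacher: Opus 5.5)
The genuine gap is in condition (iii) for $\mBHT$. Your reduction to a homogeneous $b$ invokes Proposition~\ref{Multivariate Tate-Sen Theorem} ``applied to twists''. That proposition and its corollary only compute the invariants of $\widehat{\C}_\Delta(i)$, i.e.\ twists by powers of $\chi_\Delta$. Each coefficient $b_i$, however, is an eigenvector for the unknown character $\lambda\chi_\Delta^{-i}$, and nothing in the paper controls such twists individually. Your intended conclusion is also false. A $\mathbb{Q}_p$-multiple of an element of $K_\Delta$ is invariant, so it would force $\lambda=\chi_\Delta^{i_0}$. Already for $|\Delta|=1$, a nonzero period $c\in\C$ of a nontrivial unramified $\lambda$ is a legitimate choice of $b$ in degree $0$.

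The missing idea is to cancel $\lambda$ by pairing two coefficients, as the paper does with $b_j/b_i$. Because the $b_i$ may be zero divisors here, use pseudo-inverses rather than quotients. Let $y_i$ be the pseudo-inverse of $b_i$ in the VNR ring $\widehat{\C}_\Delta$. Its uniqueness (proof of Lemma~\ref{fixedpointofVNR}) gives $g(y_i)=\lambda(g)^{-1}\chi_\Delta(g)^{i}y_i$. Hence $b_jy_i\,t_\Delta^{j-i}\in\widehat{\C}_\Delta(j-i)^{\G_{K,\Delta}}=0$ for $j\neq i$. So the idempotents $e_i=b_iy_i$ are pairwise orthogonal, and $(1-\sum_ie_i)b=0$ forces $\sum_ie_i=1$. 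Then $\sum_iy_it_\Delta^{-i}\in\mBHT$ is an inverse of $b$. This gives distinct degrees on disjoint supports, not a single surviving degree. Invertibility comes from $\widehat{\C}_\Delta$ being VNR, not from $K_\Delta$.

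Your uniform-boundedness worry is misplaced in both cases. For $\mBdr$, condition (iii) is automatic because the ring is VNR: if $b$ is a non-zero divisor and $b=b^2y$, then $b(1-by)=0$ gives $by=1$. For $\mBHT$, $b$ is a Laurent polynomial, so the exponents $n_s$ in your local route automatically lie in its degree range. That route can then be finished by checking that the coefficients of the pointwise inverse, sorted by degree, define elements of $\widehat{\C}_\Delta=\varprojlim_{\mathfrak{S}'}\C_\Delta/\bigcap_{s\in\mathfrak{S}'}\mathfrak{P}_s$.

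In (ii), your prime-by-prime computation of $C^{\G_{K,\Delta}}$ only places an invariant $x$ in a compatible family in $\prod_sK_s$; you still need it to land in $K_\Delta$. One way is to note that every $\Psi_s(x)$ then has degree $0$, so $x\in\widehat{\C}_\Delta$ and the corollary applies. The other is the paper's route: place $C^{\G_{K,\Delta}}$ inside $\mCHT^{\G_{K,\Delta}}$ and compute the latter coefficientwise.

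The rest of your plan is fine. For (i) it is more careful than the paper, which only asserts that the total ring of fractions of $\mBHT$ is VNR; your support-idempotent argument does prove it.
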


\begin{proof}
    Let us first show that $\mBHT$ is $(\mathbb{Q}_p,\G_{K,\Delta})_\Delta$-regular. The total ring of quotients of $\mBHT$ has $t_\Delta$-adic completion $\mCHT$, which is a VNR ring. Since $\mCHT$ is the profinite completion of $\C_\Delta(\!(t_\Delta)\!)$, it is enough to show that $\C_\Delta^{\G_{K,\Delta}}(\!(t_\Delta)\!)=K_\Delta$. Suppose $b=\sum\limits_{i\in\mathbb{Z}}c_it^i$, where $c_i\in\C_\Delta$, is an arbitrary element in $\C^{\G_{K,\Delta}}_\Delta((t))$. Then for $g\in\G_{K,\Delta}$, we have
    \begin{align*}
        g(b)=\sum\limits_{i\in\mathbb{Z}}g(c_i)\chi_\Delta^i(g)t^i=\sum\limits_{i\in\mathbb{Z}}c_it^i.
    \end{align*}
    Therefore, for all $i\in\mathbb{Z}$, we have $c_it^i\in\widehat{\C}_\Delta(i)^{\G_{K,\Delta}}$, and using Corollary \ref{Corollary of Multivariate Tate-Sen Theorem}, we obtain $\C^{\G_{K,\Delta}}_\Delta(\!(t_\Delta)\!)=K_\Delta$ (We will prove the result only for this type of element; for others, we must take the limits). It is obvious that $\mCHT$ is faithfully flat over $K_\Delta$. Finally, assume $b=\sum\limits_{i\in\mathbb{Z}}c_it^i\in\mBHT$ is not a zero divisor such that $g(b)=\alpha b$ for some $\alpha\in\mathbb{Q}_p$. Then we get $g(b)=\sum\limits_{i=-\infty}^\infty g(b_i)\chi^i_\Delta(g)t^i=\sum\limits_{i=-\infty}^\infty \alpha b_it^i$. Then in the total ring of fractions of $\mBHT$, we have $g\left(\frac{b_i}{b_j}\right)$ and thus $g\left(\frac{b_j}{b_i}\right)=\chi_\Delta^{i-j}\frac{b_j}{b_i}$. Therefore, $\frac{b_j}{b_i}\in\left(\widehat{\C}_\Delta(j-i)\right)^{\G_{K,\Delta}}=\begin{cases}
        0 &\text{ if }i\neq j,\\
        K_\Delta &\text{ if }i=j
    \end{cases}$. This shows that $b=b_{i}t^i$, where $b_i\in\widehat{\C}_\Delta$ is a non-zero divisor.\\

    Next, we want to prove that $\mBdr$ is $(\mathbb{Q}_p,\G_{K,\Delta})$-admissible. We have already seen that $\mBdr$ is a VNR ring. Thus, we next want to show that $\mBdr^{\G_{K,\Delta}}=K_\Delta$. We have the following chain of subrings: $K_\Delta\subset \overline{K}_\Delta\subset\mBdrp\subset\mBdr$. Let $b\neq 0$ be an element of $\mBdr$; then there exists $i\in\mathbb{Z}$ such that $b\in\Fil^i_\Delta\mBdr$ but $b\notin\Fil^{i+1}_\Delta\mBdr$. Let $\overline{b}\in\gr^i\mBdr=\widehat{\C}_\Delta(i)$ be its image; then $\overline{b}\in\widehat{\C}_\Delta(i)^{\G_{K,\Delta}}=\begin{cases}
        0 &\text{ if }i\neq 0,\\
        K_\Delta &\text{ if }i=0
    \end{cases}$ is non-zero. Therefore, $i=0$ and hence $\overline{b}\in K_\Delta\subset\mBdrp$. Now $b-\overline{b}\in\mBdr^{\G_{K,\Delta}}$ and $b-\overline{b}\in(\Fil^i_\Delta\mBdr)^{\G_{K,\Delta}}$ for some $i\geq 1$. This shows that $b-\overline{b}=0$, proving that $b=\overline{b}\in K_\Delta$. We also have that $\spec\mBdr\to \spec K_\Delta$ is surjective; thus, $\mBdr$ is faithfully flat over $K_\Delta$.
\end{proof}

\begin{definition}
   \begin{enumerate}
       \item[(i)] A $p$-adic representation $V$ of $\G_{K,\Delta}$ is called \emph{Hodge-Tate} if it is $\mBHT$ admissible.
       \item[(ii)] A $p$-adic representation $V$ of $\G_{K,\Delta}$ is called \emph{de Rham} if it is $\mBdr$ admissible.
   \end{enumerate}
\end{definition}

For any $p$-adic representation, define 
\begin{align*}
    \mbDHT(V):=(\mBHT\otimes_{\mathbb{Q}_p}V)^{\G_{K,\Delta}}
\end{align*}
Then if $V$ is a Hodge-Tate representation then $\mbDHT(V)$ is a free $K_\Delta$-module of rank $\dim_F V$, and the canonical map
\begin{align*}
    \alpha_{\text{HT},\Delta}(V):\mBHT\otimes_K\mbDHT(V)\to \mBHT\otimes_{\mathbb{Q}_p}V
\end{align*}
is an isomorphism. The $K_\Delta$-module $\mbDHT(V)$ is a graded $K_\Delta$-module
\begin{align*}
    \mbDHT(V)=\bigoplus\limits_{i\in\mathbb{Z}}(\widehat{\C}_\Delta(i)\otimes_{\mathbb{Q}_p}V)^{\G_{K,\Delta}}
\end{align*}
and clearly $(\widehat{\C}_\Delta(i)\otimes_{\mathbb{Q}_p}V)^{\G_{K,\Delta}}$ are free $K_\Delta$-modules.

We also define for any $p$-adic representation $V$
\begin{align*}
    \mbDdR(V):=(\mBdr\otimes_{\mathbb{Q}_p}V)^{\G_{K,\Delta}}
\end{align*}
and the map
\begin{align*}
    \alpha_{\text{dR},\Delta}(V):\mBdr\otimes_K\mbDdR(V)\to \mBdr\otimes_{\mathbb{Q}_p}V
\end{align*}

If the representation $V$ is de Rham, then we have $\alpha_{\text{dR},\Delta}(V)$ is an isomorphism and $\mbDdR(V)$ is a free $K_\Delta$-module of rank $\dim_F(V)$.\\

If $V$ is any $p$-adic representation of $\G_{K,\Delta}$, then $\mbDdR(V)$ is a filtered $K_\Delta$-module with
\begin{align*}
    \Fil^i_\Delta\mbDdR(V):=(\Fil^i_\Delta\mBdr\otimes_{\mathbb{Q}_p}V)^{\G_{K,\Delta}}.
\end{align*}
Consider the short exact sequence
\begin{align*}
    0\to\Fil^{i+1}_\Delta\mBdr\to \Fil^i_\Delta\mBdr\to \widehat{\C}_\Delta(i)\to 0
\end{align*}

Tensoring with $V$ and taking the $\G_{K,\Delta}$-fixed points, we get 
\begin{align*}
    \gr^i\mbDdR(V)\hookrightarrow\mbDHT(V).
\end{align*}

\begin{proposition}
    If a $p$-adic representation $V$ of $\G_{K,\Delta}$ is de Rham, then $V$ is Hodge-Tate. 
\end{proposition}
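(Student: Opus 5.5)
This follows Fontaine's classical argument (Proposition 6.? / Theorem 8.? in \cite{Fontaine}): compare ranks through the graded pieces, and exploit that both $\mbDdR(V)$ and $\mbDHT(V)$ are modules over the VNR ring $K_\Delta$. The rank of a finitely generated projective module over a VNR ring is measured pointwise on $\spec K_\Delta$, via Proposition \ref{Projective module over a VNR ring}.

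\textbf{Step 1.} Let $d=\dim_{\mathbb{Q}_p}V$. Since $V$ is de Rham, the main theorem of \S\ref{Admissible Representations} together with the regularity of $\mBdr$ shows that $\mbDdR(V)$ is free of rank $d$ over $K_\Delta$. The filtration $\Fil^i_\Delta\mbDdR(V)$ is decreasing, exhaustive and separated, since $\bigcap_i\Fil^i_\Delta\mBdr=0$ and $\bigcup_i\Fil^i_\Delta\mBdr=\mBdr$. Because $\mbDdR(V)$ has finite rank and $K_\Delta$ is VNR, I expect only finitely many jumps. To justify this, localize at each prime $\mathfrak{P}_s\cap K_\Delta$: there the filtration becomes a filtration of a finite-dimensional $K_s$-vector space. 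Every module over $K_\Delta$ is flat, so each $\Fil^i$ and each $\gr^i$ is determined by its localizations.

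\textbf{Step 2.} From the injections $\gr^i\mbDdR(V)\hookrightarrow(\widehat{\C}_\Delta(i)\otimes V)^{\G_{K,\Delta}}$ and the graded decomposition of $\mbDHT(V)$, I obtain an injection $\gr\mbDdR(V)=\bigoplus_i\gr^i\mbDdR(V)\hookrightarrow\mbDHT(V)$. Localizing at each prime $\mathfrak{p}$ of $K_\Delta$ and using exactness of localization gives
\begin{align*}
d=\dim_{(K_\Delta)_{\mathfrak p}}\mbDdR(V)_{\mathfrak p}=\sum_i\dim\gr^i\mbDdR(V)_{\mathfrak p}\leq\dim\mbDHT(V)_{\mathfrak p}.
\end{align*}

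\textbf{Step 3.} Bound the right-hand side above by $d$. I would do this through injectivity of $\alpha_{\mathrm{HT},\Delta}(V)$, which the main theorem establishes for any regular ring regardless of freeness. Localizing at a prime $\mathfrak{Q}$ of the total ring of fractions of $\mBHT$ lying over $\mathfrak{p}$, injectivity yields $\dim\mbDHT(V)_{\mathfrak p}\le d$. Equality therefore holds at every $\mathfrak p$.

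\textbf{Step 4.} Conclude that $\mbDHT(V)$ is free of rank $d$. This is the main obstacle, since the paper only conjectures finite presentation in general (Conjecture \ref{conj}). Here, however, $\gr\mbDdR(V)$ is finitely generated, being a subquotient of a free module of rank $d$ over a VNR ring, hence projective. It injects into $\mbDHT(V)$ with equal ranks at every localization, so the injection is an isomorphism locally everywhere and hence globally. Thus $\mbDHT(V)\cong\gr\mbDdR(V)$ is finitely generated projective of constant rank $d$. Proposition \ref{Projective module over a VNR ring}(ii) then shows it is free of rank $d$, and by part (i) of the main theorem $V$ is Hodge--Tate.

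The delicate points are that finitely generated submodules and quotients of projective modules over a VNR ring remain projective, and the finiteness of the filtration jumps. I would handle both locally at each $\mathfrak{P}_s$, where everything reduces to Fontaine's single-variable statement for $\G_{K_s}$ via $\Phi_s$ and $\eta_s$.
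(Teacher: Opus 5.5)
Your argument works, but it takes a different route from the paper, and one justification in it has to be replaced.

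\textbf{Comparison with the paper.} The paper works degree by degree. It starts from the left exact sequence $0\to\Fil^{i+1}_\Delta\mbDdR(V)\to\Fil^i_\Delta\mbDdR(V)\to(\widehat{\C}_\Delta(i)\otimes_{\mathbb{Q}_p}V)^{\G_{K,\Delta}}$ and asserts it is right exact ``locally'', by appeal to Fontaine's theorem for $V$ viewed as a $\G_{K_s}$-representation via $\eta_s$. It then gets freeness of the graded pieces of $\mbDHT(V)$ from finite presentation of $\Fil^i_\Delta\mbDdR(V)$ plus constant rank.

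You instead run Fontaine's dimension count entirely over $K_\Delta$. The lower bound $\sum_i\dim\gr^i=d$ comes from freeness of $\mbDdR(V)$. The upper bound $\dim\mbDHT(V)_{\mathfrak p}\le d$ comes from the injectivity of $\alpha_{\mathrm{HT},\Delta}(V)$, which the paper claims for any regular ring. This swaps the paper's single-variable input and its unexplained local-to-global exactness step for a multivariate result already in the paper. It also gives the sharper conclusion $\mbDHT(V)\cong\gr\mbDdR(V)$, and it never needs finite presentation of $\Fil^i_\Delta\mbDdR(V)$, which the paper asserts without proof.

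Your closing remark about ``reducing to $\G_{K_s}$ via $\Phi_s$'' is not what localization does. For $|\Delta|>1$, a prime of $K_\Delta$ lies under infinitely many $s\in\mathfrak{S}$, so $e_j\mBdr$ is not $\Bdr$. Your Steps 2 and 3 only use dimension counts over residue fields of $K_\Delta$, so they do not need that reduction.

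\textbf{The justification that must be replaced.} In Step 4 you say $\gr\mbDdR(V)$ is finitely generated ``being a subquotient of a free module of rank $d$ over a VNR ring''. That is false for VNR rings in general: submodules of finitely generated modules need not be finitely generated. For $|\Delta|>1$ the paper itself notes that $\mathfrak{P}_s\subset\C_\Delta$ is not finitely generated, and constant local rank alone does not force finite generation either.

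Step 1 has the same weakness. Finitely many jumps at each prime gives no global bound if $\spec K_\Delta$ is infinite. Also, $\bigcap_i\Fil^i=0$ need not survive localization over a general VNR ring, since localization does not commute with infinite intersections.

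What actually makes your argument work is the standing hypothesis of this section that $K_1,\dots,K_t$ are finite over $K_0$.
\begin{itemize}
\item Then $K_\Delta=K_1\otimes_{K_0}\cdots\otimes_{K_0}K_t$ is a finite \'etale $K_0$-algebra, so $K_\Delta\cong\prod_{j=1}^rL_j$ with $L_j$ fields and orthogonal idempotents $e_j$.
\item Localization at the $j$-th prime is just $M\mapsto e_jM$.
\item For any $K_\Delta$-submodule one has $e_j\Fil^i=\Fil^i\cap e_jM$. So separatedness and exhaustiveness pass to each factor, and with finitely many factors the jumps are globally bounded.
\item Once Steps 2--3 give $\dim_{L_j}e_j\mbDHT(V)=d$ for every $j$, you get $\mbDHT(V)=\bigoplus_je_j\mbDHT(V)\cong K_\Delta^d$ directly. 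No finite generation, projectivity, or Proposition \ref{Projective module over a VNR ring} is needed.
\end{itemize}
With this added, your proof is complete relative to the paper's stated properties of the filtration on $\mBdr$ and its claimed injectivity of $\alpha_V$.
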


\begin{proof}
    To show that $V$ is Hodge-Tate, it suffices to prove that $(\widehat{\C}_\Delta(i)\otimes_{\mathbb{Q}_p} V)^{\G_{K,\Delta}}$ is a free $K_\Delta$-module when $V$ is de Rham. Consider the short exact sequence
    \begin{align*}
        0\to \Fil^{i+1}_\Delta\mBdr\to\Fil^i_\Delta\mBdr\to \widehat{\C}_\Delta(i)\to 0 
    \end{align*}
    of $\mathbb{Q}_p$-vector spaces. Tensoring with $V$ and taking the $\G_{K,\Delta}$-fixed points, we obtain the exact sequence
    \begin{align}\label{Exact sequence III}
        0\to (\Fil^{i+1}_\Delta\mBdr\otimes_{\mathbb{Q}_p}  V)^{\G_{K,\Delta}}\to (\Fil^i_\Delta\mBdr\otimes_{\mathbb{Q}_p} V)^{\G_{K,\Delta}}\to (\widehat{\C}_\Delta(i)\otimes_{\mathbb{Q}_p} V)^{\G_{K,\Delta}}
    \end{align}
    The de Rham representation $V$ of $\G_{K,\Delta}$ can be viewed as a representation of $\G_{K_s}$ through the map $\eta_s:\G_{K_s}\to\G_{K,\Delta}$ for each $s\in\mathfrak{S}$. Because free modules are locally free of constant rank, $V$ is a de Rham representation of $\G_{K_s}$. Locally, the sequence obtained from (\ref{Exact sequence III}) is right-exact since $V$ is de Rham as a representation of $\G_{K_s}$; hence, the sequence (\ref{Exact sequence III}) is right-exact globally. Therefore, when $V$ is de Rham, we obtain the short exact sequence
    \begin{align*}
         0\to (\Fil^{i+1}_\Delta\mBdr\otimes_{\mathbb{Q}_p}  V)^{\G_{K,\Delta}}\to (\Fil^i_\Delta\mBdr\otimes_{\mathbb{Q}_p} V)^{\G_{K,\Delta}}\to (\widehat{\C}_\Delta(i)\otimes_{\mathbb{Q}_p} V)^{\G_{K,\Delta}}\to 0
    \end{align*}
    of $K_\Delta$-modules. Since $\Fil^i_\Delta\mbDdR(V)=(\Fil^i_\Delta\mBdr\otimes_{\mathbb{Q}_p}V)^{\G_{K,\Delta}}$ is finitely presented and $\Fil^{i+1}_\Delta\mbDdR(V)=(\Fil^{i+1}_\Delta\mBdr\otimes_{\mathbb{Q}_p}V)^{\G_{K,\Delta}}$ is finitely generated, $\mbDHT(V)=(\widehat{\C}_\Delta(i)\otimes_{\mathbb{Q}_p}V)^{\G_{K,\Delta}}$ is finitely presented. Therefore, Conjecture \ref{conj} holds in this case. Furthermore, because it has a constant rank at each localization, it is free. 
\end{proof}

\begin{remark}
    The de Rham and Hodge-Tate representations in the sense of \cite{BCM} are also de Rham and Hodge-Tate, respectively, but the converse may not hold.
\end{remark}

\section{Final Remarks}
The crystalline multivariate period rings can be constructed analogously in multivariable setups. However, we do not pursue this further here, as the construction is highly parallel to that of \cite{Fontaine}. In fact, the proofs involved are largely a matter of adapting the notation of the classical theory.

\bibliographystyle{amsalpha}
\bibliography{name}
\end{document}